\documentclass[12pt,reqno]{amsart}
\usepackage{fancyhdr}
\usepackage{geometry}
\usepackage{amsmath, amsthm, amssymb, bm, color, framed, graphicx, hyperref, mathrsfs}
\usepackage{cite}
\usepackage{tikz}
\usetikzlibrary{hobby}
\usepackage{tikz-3dplot}
\usepackage{pgfplots}

\usepackage[all]{xy}
\usepackage{bbm}
\usepackage{cancel}
\usepackage{esint}
\usepackage{float}
\definecolor{shadecolor}{RGB}{241, 241, 255}
\newtheorem{theorem}{Theorem}

\def\pa{\partial}

\def \M {\mathcal{M}}

\def\be{\begin{equation}}
\def\ee{\end{equation}}
\def\befr{\begin{frame}}
\def\enfr{\end{frame}}

\def\M{{\bf M}}

\def\De{\Delta}

\def\Ric{\operatorname{Ric}}
\def\Vol{\operatorname{Vol}}
\newtheorem{lemma}{Lemma}
\newtheorem{corollary}{Corollary}
\newtheorem{remark}{Remark}
\newtheorem{definition}{Definition}
\numberwithin{theorem}{section}
\numberwithin{equation}{section}
\numberwithin{lemma}{section}
\numberwithin{corollary}{section}
\numberwithin{remark}{section}
\numberwithin{definition}{section}

\begin{document}
\title{On the space of harmonic forms and Schr\"odinger heat equations on noncompact manifolds}
\author{Jiuyuan Cui}
\address{School of Mathematical Science, Fudan University, Shanghai 200433, China}
\email{jycui23@m.fudan.edu.cn}
\date{}
\maketitle

\begin{abstract}
It is known that on complete Riemannian manifolds with nonnegative Ricci curvature, the space of harmonic 1-forms of polynomial growth is finite-dimensional. 
In this paper, we extend this finiteness phenomenon to some manifolds whose Ricci curvature is allowed to be negative. 
More precisely, under a scale-invariant $L^2$-Sobolev inequality, a weak Hardy inequality, and quadratic Ricci curvature decay with a sufficiently small coefficient, we prove that the space of polynomial-growth harmonic 1-forms is finite-dimensional. 
We also construct a counterexample showing the necessity of the smallness condition.

Our result is closely related to an earlier theorem of L.F.Tam \cite{tam1998note}, where the finite-dimensionality of polynomial-growth harmonic 1-forms was established under a faster curvature decay assumption, without any smallness condition on the coefficient. 
In comparison, our result treats the critical quadratic decay regime, at the expense of requiring the corresponding coefficient to be sufficiently small. 
As a byproduct, we obtain an analogous finite-dimensionality result for polynomial-growth solutions to the Schrödinger heat equation, which generalizes the finite-dimensionality results in \cite{lin2019ancient},\cite{colding2021optimal} and \cite{lin2026space} to some extent.

\end{abstract}
\section{Introduction}
 Let $(\M,g)$ be an $n$-dimensional complete Riemannian manifold. 
 Given a positive integer $l$, let $\mathcal H_{l}(\M)$ denote the linear space of harmonic functions with polynomial growth of order $l$. 
 Namely, $u\in\mathcal H_{l}(\M)$ if $\Delta_gu=0$ and for some $p\in\M$ and a constant $C_u$ depending on $u$ such that$$\sup\limits_{B_p(R)}|u(x)|\leq C_u(1+R)^{l}.$$
 S.T.Yau conjectured that this linear space is finite dimensional for each $l$ when $\Ric\geq 0$. 
 This conjecture was proved by Colding and Minicozzi in \cite{colding1997harmonic} under a much weaker assumption. 
 They proved the following remarkable result:\begin{theorem} [Colding, Minicozzi]
    
    If  $\M$ is an open manifold which has the doubling property and satisfies a uniform Neumann-Poincar\'e inequality, 
    then for all $l>0$, there exists $C>0$ depending on $l$ and constants in volume doubling property and Neumann-Poincar\'e inequality, 
    such that $\dim\mathcal{H}_l(\M)\leq C.$
\end{theorem}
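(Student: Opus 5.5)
The plan is to follow the Colding--Minicozzi strategy: combine a reverse Poincar\'e (Caccioppoli) inequality for harmonic functions with a Neumann--Poincar\'e inequality to get a mean value type estimate. Then use a linear algebra / volume counting argument to bound the dimension of any finite-dimensional subspace $K\subset\mathcal H_l(\M)$ uniformly.

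\textbf{Step 1 (mean value inequality).} Doubling plus the uniform Neumann--Poincar\'e inequality give a local $L^2$ Sobolev inequality on balls, by the Saloff-Coste and Grigor'yan theory. Moser iteration then yields, for harmonic $u$,
\[
\sup_{B_p(r)}u^2\le \frac{C}{\Vol(B_p(2r))}\int_{B_p(2r)}u^2 .
\]
We also note that doubling gives polynomial volume growth, $\Vol(B_p(R))\le C R^{d}$ with $d=\log_2 C_D$.

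\textbf{Step 2 (reduction to an inequality on a finite-dimensional subspace).} Fix $K\subset\mathcal H_l(\M)$ with $\dim K=k$. On $K$ we consider the quadratic forms
\[
A_R(u,v)=\int_{B_p(R)}uv .
\]
Since each $u\in K$ grows at most like $R^{l}$, the determinant of $A_R$ with respect to a fixed basis grows at most like $R^{k(2l+d)}$. A standard pigeonhole argument along the radii $R_j=\Omega^j R_0$ then produces arbitrarily large $R$ for which
\[
\det A_{\Omega R}\le \Omega^{k(2l+d)+\epsilon}\det A_R .
\]
Hence, after diagonalizing, there is an $A_R$-orthonormal basis $u_1,\dots,u_k$ with
\[
\sum_i\int_{B_{\Omega R}}u_i^2\le C(\Omega)\,k .
\]

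\textbf{Step 3 (the key trace bound, the main obstacle).} We must show that for any $A_{\Omega R}$-orthonormal basis $\{v_i\}$ of $K$,
\[
\sum_i\int_{B_R}v_i^2\le C\,k^{1-\delta}
\]
for some $\delta>0$ depending only on the doubling and Poincar\'e constants. This is the hard part. Cover $B_R$ by roughly $\epsilon^{-d}$ balls of radius $\epsilon R$. Applying the Poincar\'e inequality together with the reverse Poincar\'e inequality on each ball shows that the subspace of $K$ whose elements have vanishing averages on all these small balls has codimension at most $N(\epsilon)\approx C\epsilon^{-d}$. On that subspace
\[
\int_{B_R}v^2\le C\epsilon^2\int_{B_{\Omega R}}v^2 .
\]
Combining this with the pointwise bound from Step 1, which gives $\sum_i v_i(x)^2\le C/\Vol(B_x(\cdot))$ after integrating a trace, yields
\[
\sum_i\int_{B_R}v_i^2\le C\bigl(\epsilon^{-d}+\epsilon^2 k\bigr).
\]
Optimizing $\epsilon$ gives an estimate of the form $Ck^{d/(d+2)}$.

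\textbf{Step 4 (conclusion).} Compare the lower bound from Step 2, which says that the trace $\sum_i\int_{B_R}v_i^2$ against the normalized basis is at least about $k\,\Omega^{-(2l+d)-\epsilon}$, with the upper bound $Ck^{d/(d+2)}$ from Step 3. This forces $k^{2/(d+2)}\le C\Omega^{2l+d+1}$, so $k\le C(l,C_D,C_P)$. Because the bound is uniform in $K$, it gives $\dim\mathcal H_l(\M)\le C$.
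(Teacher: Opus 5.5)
The paper gives no proof of this statement. It is quoted from Colding--Minicozzi as background, so the only comparison available is with the Li-type counting argument the paper uses for Theorem~\ref{thm1} in Section~\ref{sec4}.

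\textbf{Corrections.} Your argument is correct in substance, but Step 2 contains a false deduction that you should remove.
\begin{itemize}
\item In a basis that is $A_R$-orthonormal and $A_{\Omega R}$-diagonal, set $\mu_i=\int_{B_p(\Omega R)}u_i^2\ge 1$. The determinant inequality only bounds $\prod_i\mu_i$, and AM--GM cannot turn that into $\sum_i\mu_i\le C(\Omega)k$: the determinant bound alone allows a single $\mu_i$ of size $\Omega^{k(2l+d)}$.
\item What does follow, and what Step 4 actually uses, is the opposite inequality for the $A_{\Omega R}$-orthonormal basis $v_i=u_i/\sqrt{\mu_i}$, namely $\sum_i\int_{B_p(R)}v_i^2=\sum_i\mu_i^{-1}\ge k\bigl(\prod_i\mu_i\bigr)^{-1/k}\ge k\,\Omega^{-(2l+d)-\epsilon/k}$. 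This is exactly Lemma~\ref{lower_bound_esti_form}.
\item To start the pigeonhole you also need $A_R$ nondegenerate on $K$. Unique continuation gives this. Alternatively, take $R_0$ large, since the null spaces of $A_R$ form a decreasing chain of subspaces of $K$ with trivial intersection.
\item In Step 3 the pointwise bound plays no role. The trace $\operatorname{tr}(A_{\Omega R}^{-1}A_R)$ does not depend on the choice of $A_{\Omega R}$-orthonormal basis. So choose $v_i$ with the last $k-N'$ spanning the vanishing-average subspace, where $N'\le N(\epsilon)$ just by counting linear conditions. The first $N'$ vectors contribute at most $1$ each. The others contribute at most $C\epsilon^2$ each, by Poincar\'e on the small balls, bounded overlap from doubling, and Caccioppoli.
\item You do not need to optimize in $k$. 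Fixing $\epsilon$ with $C\epsilon^2\le\frac12\Omega^{-(2l+d+1)}$ gives $k\le 2C\epsilon^{-d}\Omega^{2l+d+1}$ directly.
\end{itemize}

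\textbf{Comparison with the paper's route.} In the paper's argument, the work of your Step 3 is done by the mean value inequality. For $x\in B_p(R)$ one has $\sum_iv_i(x)^2\le c\sup_{\|v\|_{A_{\Omega R}}=1}|v(x)|^2$, with $c=1$ for functions and $c=n$ for $1$-forms, since $K_x$ has codimension at most $c$. This is at most $C/\Vol(B_x((\Omega-1)R))$. Integrating over $B_p(R)$ and using doubling then bounds the trace by a constant. So your Step 1 plus the corrected Step 2 already finishes the proof with no covering at all. (The paper integrates over an annulus only because its mean value inequality, Theorem~\ref{mvi}, holds away from $p$.)

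Conversely, your covering argument makes Step 1 superfluous. It uses only doubling, Poincar\'e and Caccioppoli, so it avoids the Saloff-Coste Sobolev inequality and Moser iteration entirely. The mean-value route is the one suited to the paper's setting: it needs only a scalar differential inequality for $|\omega|$ coming from Bochner. Imposing vanishing averages and applying Poincar\'e to the solution itself has no direct analogue for $1$-forms.
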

There have been several important generalizations of this result. 
Colding and Minicozzi \cite{colding1998liouville} extended it to harmonic sections of Hermitian vector bundles, 
following an earlier announcement in \cite{colding1996generalized}. 
Shortly thereafter, Li \cite{li1997harmonic} provided a shorter proof. 
More precisely, he obtained a sharp dimension estimate for the space of sections of a vector bundle with polynomial growth whose squared norms are subharmonic. 
This result was later generalized by Hua \cite{hua2011harmonic} to the setting of Alexandrov spaces using a similar argument. 
Similar results have been obtained for complete manifolds under weaker curvature assumptions, without requiring nonnegative curvature. 
In 1998, Tam \cite{tam1998note} studied harmonic forms on complete Riemannian manifolds under the assumption of rapidly decaying curvature. 
He showed that if the Ricci curvature decays faster than $r^{-n}$ and a volume comparison condition holds at infinity, then the space of harmonic 1-forms with polynomial growth is finite dimensional. 
Similar results for higher-order harmonic forms are obtained by replacing the Ricci curvature assumption with a corresponding decay assumption on the curvature operator. 
Recently, J.Y.Wu and P.Wu\cite{wu2023harmonic} gave an estimate of $\dim\mathcal{H}_l(\M)$ when $\M$ is a complete shrinker and the scalar curvature of $\M$ has quadratic decay. 

In parallel, extensive studies of related problems have been carried out in complex geometry, particularly in the K\"ahler setting, where a rich theory has been developed using methods from complex analysis and differential geometry. 
We now briefly recall some representative results in this direction. 
Let $\M$ be a complete K\"ahler manifold and $\mathcal{O}_l(\M)$ be the linear space of holomorphic functions with polynomial growth of order $l$. 
Ni \cite{ni2004monotonicity} discovered a monotonicity formula and proved that if $\M$ is a complete K\"ahler manifold with nonnegative bisectional curvature, then $\dim\mathcal{O}_l(\M)\leq C(n)l^n$. 
Then in \cite{liu2016three}, Liu proved that $\M$ satisfies three-circle theorem if and only if the holomorphic sectional curvature is nonnegative. 
As an application, Liu extended Ni's result to the case of nonnegative holomorphic sectional curvature. 
In 2021, Liu \cite{liu2021dimension} proved that if $\M$ has nonnegative Ricci curvature and Euclidean volume growth, i.e., $\Vol(B_p(r))>\alpha r^{2n}$ for all $r$, then $\dim\mathcal{O}_l(\M)\leq C(n,\alpha)l^n$, see also \cite{liu2023brief}. 
In addition, the case of K\"ahler Ricci shrinking soliton was studied by He and Ou, see \cite{he2024dimension,he2025dimension}. 
In the K\"ahler case, there are some results showing that $\dim\mathcal{O}_l(\M)$ is less than or equal to $\dim\mathcal{O}_l(\mathbb{C}^n).$ 
For example, Ni \cite{ni2004monotonicity} showed that if a complete K\"ahler manifold $\M$ with nonnegative bisectional curvature has maximal volume growth, then $\dim\mathcal{O}_l(\M)\leq \dim\mathcal{O}_l(\mathbb{C}^n)$.
Shortly after, in \cite{chen2006sharp}, Chen-Fu-Le-Zhu removed the volume condition by using a similar approach as in \cite{ni2004monotonicity}. 
Another result was given by Liu \cite{liu2016three}. 
He showed that if a complete K\"ahler manifold has nonnegative holomorphic sectional curvature, then $\dim\mathcal{O}_l(\M)\leq \dim\mathcal{O}_l(\mathbb{C}^n)$. 
Thus, one may ask whether this holds in the Riemannian case. However, this fails in general in Riemannian setting, see \cite{donnelly2001harmonic}. 

These dimension estimates have various applications. 
For instance, Kleiner \cite{kleiner2010new} gave a new proof of Gromov's theorem on groups of polynomial growth, originally due to Gromov \cite{gromov1981groups}. 
Recently, Lin et al. \cite{lin2026space} investigated the space of harmonic functions with polynomial growth on connected sums of manifolds. 
They proved that this space is finite dimensional under fairly general geometric assumptions, without requiring the volume doubling property. 
As a consequence, they eliminated the need for any local topological or geometric conditions near a reference point.

Since harmonic functions arise as stationary solutions to the heat equation, it is natural to extend the above discussion to the time-dependent setting. 
It is well known that the study of the heat equation is a fundamental topic in geometric analysis. 
One important problem in this area is to investigate the properties of ancient solutions, which play a crucial role in singularity analysis. 
We say that $u$ is an ancient solution of heat equation if $$\pa_t u-\Delta u=0,\quad (t,x)\in (-\infty,0)\times\M.$$ 
Let $\mathcal{P}_l(\M)$ be the linear space of ancient caloric functions with polynomial growth of order $l$, 
i.e., $u\in\mathcal{P}_l(\M)$ if $u$ is an ancient solution of the heat equation and for some $p\in\M$ and constant $C_u$ depending on $u$, we have $$\sup\limits_{[-R^2,0]\times B_p(R)}|u|\leq C_u(1+R)^l.$$
In 2019, Lin and Zhang \cite{lin2019ancient} proved that ancient caloric functions with polynomial growth are necessarily polynomials in the time variable, 
and consequently obtained a dimension estimate for the space of such functions of growth order $l$. 
This estimate was later refined by Colding and Minicozzi \cite{colding2021optimal}, who established a sharp bound. 
They showed that $$\dim\mathcal{P}_{2k}(\M)\leq\sum\limits_{i=0}^k\dim\mathcal{H}_{2i}(\M),$$ if $\M$ has polynomial volume growth. 
As a consequence, $\dim\mathcal{P}_l(\M)\leq Cl^{n-1}$ if Ric$\geq0$. 
This result was further generalized by Ho \cite{ho2023ancient} to the setting of pseudohermitian manifolds using techniques inspired by \cite{colding2021optimal}. 
Analogous dimension estimates have also been established in the setting where the geometry evolves by a geometric flow. 
In particular, for ancient mean curvature flows, Maria Calle proved in 2006 \cite{calle2006bounding} that for an ancient solution $(\M_t,g_t)$ satisfying the growth condition $$1\leq\frac{\mathcal{A}(\M\cap E_r)}{r^n}\leq V_{\M}<\infty,$$
one has the dimension estimate $\dim\mathcal{H}_l(\M)\leq C(n)V_\M l^{n-1}$. 
We refer to \cite[page 657]{calle2006bounding} or \cite[page 17]{calle2007mean} for the definition of $\mathcal{A}(\M\cap E_r)$ and further discussions. 
More recently, a sharp bound for $\dim\mathcal{H}_l(\M)$ in the context of ancient mean curvature flows with bounded entropy was later obtained by Colding and Minicozzi \cite{colding2020complexity}. 
Among its applications, they showed that the codimension of an ancient mean curvature flow in Euclidean space is bounded by its entropy. 

While the above works focus primarily on scalar caloric functions, many problems in geometric analysis naturally involve differential forms. 
Since differential forms generalize functions and encode richer geometric information, it is natural to investigate whether the polynomial time structure and finite-dimensionality results extend to this setting. 
In 2006, Chen and Sung \cite{chen2006dimension} proved that, under suitable curvature assumptions—namely, nonnegative Ricci curvature for differential 1-forms and nonnegative curvature operator for higher-order forms—the space of differential forms with polynomial growth on a metric $g'$ uniformly equivalent to $g$ is finite dimensional. 
For caloric differential forms, Sung and Tsai \cite{sung2022polynomial} showed that on manifolds with polynomial volume growth, ancient caloric differential forms admit a representation as polynomials in the time variable with differential form coefficients. 
Moreover, by adapting the methods of Colding and Minicozzi \cite{colding2021optimal}, they established the finite dimensionality of the space of caloric differential forms with polynomial growth. 

Before stating our main theorem, we introduce some notation that will be used throughout the paper. 
We denote the underlying manifold by $\M$. 
For $x\in \M$ and $R>0$, $B_x(R)$ denotes the geodesic ball centered at $x$ with radius $R$. 
For $0<s<t$, we write $A(x,s,t):=B_x(t)\setminus \overline{B_x(s)}$ for the annulus centered at $x$ with inner radius $s$ and outer radius $t$. 
For any measurable subset $\Omega\subseteq \M$, $\Vol(\Omega)$ denotes its Riemannian volume. 
Finally, after fixing a base point $p\in \M$, we write $r(x):=d(p,x).$

Our first main result can be stated as follows.

\begin{theorem}\label{thm1}
Let $(\M^n,g),n\geq 3$ be a complete $n$-dimensional Riemannian manifold satisfying the following assumptions:
\begin{enumerate}
    \item [(a)] $L^2$-Sobolev inequality: there exists a constant $C_S>0$ such that for any geodesic ball $B_x(R)$ centered at $x\in\M$ with radius $R$ and for any $u\in C_0^\infty(B_x(R))$, we have \begin{equation}
    \bigg(\fint_{B_x(R)}|u|^{\frac{2n}{n-2}}dx\bigg)^{\frac{n-2}{n}}\leq C_S R^2\bigg(\fint_{B_x(R)}|\nabla u|^2dx\bigg);
\end{equation}
    \item [(b)] The weak Hardy type inequality: there exists a constant $C_H$ such that the following inequality holds for all $u\in C^\infty_0(\M)$:
    $$\int_\M \frac{u^2}{1+r^2(x)}dx\leq C_H\int_\M|\nabla u|^2dx;$$
    \item [(c)] The Ricci curvature of $\M$ satisfies, for some $0<\varepsilon<C_H^{-1}$, $$\Ric_g\geq -\frac{\varepsilon}{1+r^2(x)}g.$$ 
\end{enumerate}
Then for every $l>0$, the space of harmonic $1$-forms of polynomial growth of order $l$ is finite-dimensional.
\end{theorem}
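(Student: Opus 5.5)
Let $\omega$ be a harmonic $1$-form of polynomial growth of order $l$, and set $f=|\omega|$. By the Bochner formula $\frac12\Delta|\omega|^2=|\nabla\omega|^2+\Ric(\omega^\sharp,\omega^\sharp)$ and Kato's inequality $|\nabla\omega|\ge|\nabla|\omega||$, assumption (c) gives, in the weak sense,
\[
\Delta f\ \ge\ -V f,\qquad V(x)=\frac{\varepsilon}{1+r^2(x)} .
\]
The plan is to absorb the negative potential using the Hardy inequality (b). This produces a positive weight $h$ with $f/h$ "subharmonic" for a weighted Laplacian. We then run the Li / Colding--Minicozzi mean value plus counting argument in the weighted setting, as in \cite{li1997harmonic}. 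That argument gives a dimension bound from two ingredients: a uniform mean value inequality for nonnegative subsolutions, and polynomial volume growth.

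\textbf{Step 1 (positive solution of the Schr\"odinger equation).} Since $\varepsilon C_H<1$, (b) gives
\[
\int_{\M}|\nabla u|^2-Vu^2\ \ge\ (1-\varepsilon C_H)\int_{\M}|\nabla u|^2
\qquad\text{for all } u\in C_0^\infty(\M).
\]
So $L=\Delta+V$ is nonnegative, and indeed subcritical. By the Fischer-Colbrie--Schoen/Agmon--Allegretto argument, there is a positive $h$ with $\Delta h+Vh=0$.

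We also want $h$ comparable to constants, $C^{-1}\le h\le C$. To get this, solve $\Delta w+Vw=-V$ with $w$ the minimal positive solution, defined as a limit over exhausting balls, and take $h=1+w$. Boundedness of $w$ would come from a Moser iteration using the local Sobolev inequality (a) together with the $|\cdot|\le\varepsilon r^{-2}$ size of $V$. Here the scale-invariance matters: on $B_x(R)$ with $r(x)\sim R$, the rescaled potential $R^2V$ is bounded by $\varepsilon$. I would try to obtain a global $L^\infty$ bound via a De Giorgi type iteration on dyadic annuli, using the smallness of $\varepsilon$.

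If bounded $h$ is not attainable, I would settle for polynomial bounds $C^{-1}r^{-\delta}\le h\le Cr^{\delta}$ with small $\delta=\delta(\varepsilon)$. This still suffices, since it only shifts the growth order $l$.

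\textbf{Step 2 (mean value inequality).} Set $\phi=f/h$. Then $\mathrm{div}(h^2\nabla\phi)\ge0$, so $\phi$ is a subsolution of a divergence-form operator with uniformly elliptic weight $h^2$. Assumption (a) implies, by the standard route of Saloff-Coste/Grigor'yan, volume doubling and the Neumann--Poincar\'e inequality on balls. The weighted measure $h^2\,dx$ inherits these because $h$ is comparable to a constant. Moser iteration then gives
\[
\sup_{B_x(R)}\phi^2\ \le\ C\fint_{B_x(2R)}\phi^2\,h^2 ,
\]
with $C$ independent of $x$ and $R$. Doubling also yields polynomial volume growth $\Vol(B_p(R))\le CR^{\mu}$.

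\textbf{Step 3 (counting).} Apply Li's lemma \cite{li1997harmonic} to a finite-dimensional subspace $K$ of polynomial-growth harmonic $1$-forms, using the inner product $\int_{B_p(R)}\langle\omega,\eta\rangle h^{-2}\cdot h^2$. For any orthonormal basis $\{\omega_i\}$ we have $\sum_i|\omega_i|^2(x)\le C\,\Vol(B_p(R))^{-1}\cdot$ (rank factor $n$) by the mean value inequality; the vector-valued version handles the fact that $\sum|\omega_i|^2$ is controlled through $\sup_{\text{unit }a}|\sum a_i\omega_i|^2$. Combining this with the doubling-type growth comparison between $R$ and $(1+\beta)R$ bounds $\dim K$ by $C(n,C_S,C_H,\varepsilon)\,l^{\mu-1}$, independently of $K$.

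The main obstacle is Step 1: producing a positive solution $h$ with two-sided control, uniformly at all scales. This is exactly where the smallness condition $\varepsilon<C_H^{-1}$ must enter, and the counterexample shows it cannot be dropped.
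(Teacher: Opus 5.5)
\textbf{The gap is in Steps 1--2.} It is structural rather than a missing detail: the bounded ground state you aim for does not exist even in the simplest case.

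Take $\M=\mathbb{R}^n$ with your $V=\varepsilon/(1+|x|^2)$; here $4\varepsilon<(n-2)^2$, since $C_H\ge 4/(n-2)^2$. Suppose $h>0$ solved $\Delta h+Vh=0$ with $h\ge c>0$. Then $-\Delta h\ge cV$, and comparison on balls forces $h(x)\ge c\int_{\mathbb{R}^n}G(x,y)V(y)\,dy=+\infty$, because $\int|x-y|^{2-n}(1+|y|^2)^{-1}\,dy$ diverges logarithmically.

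For the same reason, the exhaustion defining a minimal positive solution of $\Delta w+Vw=-V$ diverges. Already the spherical means of solutions of $\Delta w=-V$ behave like $c-\frac{\varepsilon}{n-2}\log r$. The same obstruction appears in the example of Section~\ref{subsec5.1}, with $V$ replaced by the negative part of the lowest Ricci eigenvalue, which is $\asymp r^{-2}$ there.

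What one actually gets is the positive radial solution $h\sim r^{-\delta}$, with $\delta=\frac{(n-2)-\sqrt{(n-2)^2-4\varepsilon}}{2}>0$. Taking $f=h$ gives $\sup_{B_p(R)}f^2\big/\fint_{B_p(2R)}f^2\gtrsim R^{2\delta}$. So no unweighted mean value inequality on balls centred at $p$ can hold with a constant independent of $R$, and some non-constant weight is unavoidable.

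Everything therefore rests on your fallback, and there Step 2 has no support. The weight $h^2$ varies by a factor $\sim R^{2\delta}$ across $B_p(R)$, so ``$h^2dx$ inherits doubling and Sobolev because $h$ is comparable to a constant'' no longer applies. Two-sided polynomial bounds do not control the oscillation of $h$ on balls at intermediate scales. You would need two separate proofs:
\begin{enumerate}
\item that $h^2dx$ is doubling and satisfies a Sobolev inequality on every ball with uniform constants;
\item that the polynomial bounds on $h$ themselves hold under (a)--(c).
\end{enumerate}
Shifting the growth order only affects the determinant step (Lemma~\ref{lower_bound_esti_form}). The counting step needs a mean value constant independent of $R$, and the proposal does not supply it.

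Separately, (a) is a Dirichlet-type inequality and does not imply a Neumann--Poincar\'e inequality; it holds on $\mathbb{R}^n\#\mathbb{R}^n$, where Neumann--Poincar\'e fails on large balls. This is harmless, since Moser iteration for subsolutions does not need Poincar\'e.

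\textbf{How the paper avoids this.} It never uses a mean value inequality near $p$. The ingredient you are missing is Lemma~\ref{lem_appendix}, obtained in three moves:
\begin{enumerate}
\item Integrate the Bochner formula against $\phi^2$.
\item Absorb $\int\phi^2|\Ric_-||\omega|^2$ using the Hardy inequality applied to $\phi|\omega|$ together with Kato's inequality. This is the only place $\varepsilon C_H<1$ enters. It works because it is done at the $L^2$ level; at Moser exponent $a$ the admissible size of $\varepsilon C_H$ shrinks like $(2a-1)/a^2$.
\item Use (a) as a Faber--Krahn inequality on $B_p((1+\frac{\lambda}{2})R)$.
\end{enumerate}
This gives
\[ \int_{B_p(R)}|\omega|^2\,dx\le C\,\frac{(1+\frac{\lambda}{2})^2}{\lambda^2}\int_{B_p((1+\lambda)R)\setminus B_p(R)}|\omega|^2\,dx, \]
so the $L^2$ mass of $\omega$ in the ball is controlled by its mass in the outer annulus.

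The mean value inequality (Theorem~\ref{mvi}) is then needed only on balls $B_x(\tau R)$ with $x$ in the annulus. There $R^2V\le C\varepsilon$ and local Bishop--Gromov controls the volume ratios, so neither smallness nor a global weight is required. Li's counting then closes with Lemma~\ref{vol_comparison_far}. Replacing your global ground-state transform by this annulus estimate is what makes the argument work.
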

We now comment on the assumptions in Theorem \ref{thm1}.

\begin{remark}
    Although our main theorem involves several assumptions, the class of manifolds satisfying these conditions is nonempty and, in fact, quite broad. 
    For instance, such manifolds can be constructed by suitably perturbing Euclidean space; see section \ref{subsec5.1}. 
    More generally, $\operatorname{SOB}(n)$-manifolds defined below satisfy all the assumptions of our theorem, by taking $\alpha=1$ and $\alpha=\frac{n}{n-2}$ in \cite[Theorem 1.2]{hein2011weighted}, respectively, except for the smallness condition on the curvature coefficient $\varepsilon$, where $n$ is the dimension of the manifold.
    On the other hand, in section \ref{subsec5.2}, we construct a counterexample to show that the smallness assumption is necessary.
\end{remark}
The notion of an $\operatorname{SOB}(\beta)$-manifold was introduced by Hein in \cite{hein2011weighted}. 
In Hein's original definition, every sufficiently large annulus is required to be connected. 
This connectivity condition was later weakened by Li in his study of Calabi--Yau metrics; see \cite[Definition 5.7]{li2019new}. 
Instead of requiring all sufficiently large annuli to be connected, Li imposes an annular connectivity condition for points on sufficiently large geodesic spheres.

For the purpose of establishing Hein's weighted Sobolev inequality, an even weaker connectivity condition is sufficient. 
Namely, it is enough to have a sequence of connected annuli at uniformly controlled geometric scales whose union, together with a fixed geodesic ball, covers the whole manifold. 
We therefore use the following weaker variant of the $\operatorname{SOB}(\beta)$ condition.

\begin{definition}\label{weakSOB}
    Let $\beta>0$ be a fixed real number. 
    A complete manifold $\M$ is called $\operatorname{SOB}(\beta)$ if there exists $p\in\M$ and $C> 1$ such that:

    (i) there exists $\eta>1,r_0>0$ such that $\M=B_p(r_0)\cup\bigcup\limits_{k=1}^\infty A(p,\eta^{k-1}r_0,\eta^kr_0)$, where $B_p(r_0)$ and $A(p,\eta^{k-1}r_0,\eta^kr_0)$ are connected;

    (ii) for all $r\geq C$, $\Vol(B_p(r))\leq Cr^\beta$;

    (iii) for all $x\in\M$ such that $r(x)\geq C$, we have $\Vol(B_x((1-\frac{1}{C})r(x)))\geq \frac{1}{C}r(x)^\beta$ and $\Ric\geq-Cr(x)^{-2}$.
\end{definition}

Under this weaker connectivity assumption, Hein's weighted Sobolev inequality still follows from the original argument. 
Indeed, an inspection of the proof of \cite[Theorem~1.2]{hein2011weighted} shows that the connectivity assumption is used only through the existence of a sequence of connected annuli with uniformly controlled scales. 

For broader applications, we further analyze our assumptions line by line.
\begin{remark}
The scale-invariant $L^2$-Sobolev inequality is a standard analytic assumption in geometric analysis and is satisfied by many familiar classes of manifolds. 
For instance, it holds on manifolds with nonnegative Ricci curvature and Euclidean volume growth; see \cite[Section 3.3.5]{saloff2002aspects}. 
Moreover, combined with the Caccioppoli inequality and Moser iteration, the scale-invariant Sobolev inequality yields a mean value inequality for nonnegative subharmonic functions. 
Such mean value estimates play a fundamental role in dimension estimates for spaces of harmonic functions, harmonic forms, and other solutions of elliptic equations with prescribed growth. 
Thus, the scale-invariant Sobolev inequality is a natural and standard hypothesis in this type of finiteness problem.
\end{remark}
\begin{remark}
The weak Hardy inequality is another important analytic assumption in Theorem \ref{thm1}. 
In our argument, the negative part of the Ricci curvature appears as a potential term with the critical decay rate $O(r^{-2})$. 
The weak Hardy inequality allows us to control this term and, under the smallness assumption on the curvature coefficient, to absorb its contribution into the main energy term.

There is a broad class of manifolds for which such a weak Hardy inequality is available. 
For instance, examples can be obtained by suitably perturbing Euclidean space; see Section \ref{subsec5.1}. 
More generally, it follows from Hein's weighted Sobolev inequality on $\operatorname{SOB}(\beta)$-manifolds, with $\beta>2$, by taking $\alpha=1$ in \cite[Theorem~1.2]{hein2011weighted}.

\end{remark}
\begin{remark}
Our result should also be compared with earlier work on curvature decay. 
We emphasize that Theorem \ref{thm1} allows the Ricci curvature to be negative, thereby relaxing the nonnegativity assumption appearing in classical finiteness results.

An earlier theorem of L.F.Tam \cite{tam1998note} states that if a complete manifold $\M$ satisfies a suitable volume comparison condition and
$\Ric\geq-\lambda(r)g,$ 
where $$\int_0^\infty r^{n-1}\lambda(r)\,dr<\infty,$$ then, for each $l>0$, the space of harmonic $1$-forms of polynomial growth of order at most $l$ is finite-dimensional, with dimension bounded above by $Cl^{n-1}$. 
Analogous results for differential forms of higher degree were also established there. 
Tam's result assumes a faster curvature decay without requiring a smallness condition on the coefficient, whereas our theorem treats the quadratic decay regime at the expense of imposing such a smallness condition. 
In this sense, the two results are complementary.

The quadratic scale also appears to be a natural borderline from the viewpoint of curvature. Indeed, a classical result of H.Choi \cite{choi1984asymptotic} shows that if the radial curvature $K_{\mathrm{rad}}$ of a rotationally symmetric manifold satisfies
$$K_{\mathrm{rad}}\leq-\frac{A}{r^2\log r},\qquad A>1,$$
then the asymptotic Dirichlet problem is solvable. 
In particular, such manifolds admit an infinite-dimensional space of bounded harmonic functions. 
Since $du$ is a harmonic $1$-form whenever $u$ is harmonic, standard gradient estimates also lead to an abundance of bounded harmonic $1$-forms. 
This provides further evidence that the quadratic curvature scale is a natural borderline in finite-dimensionality problems for harmonic functions and forms.
\end{remark}

The role of the weak Hardy inequality in our proof can now be summarized as follows. 
The negative part of the Ricci curvature contributes a critical $r^{-2}$-type potential term in the energy estimates. 
The Hardy inequality provides precisely the control needed for this term, while the smallness of its coefficient allows it to be absorbed into the left-hand side. 
This mechanism is one of the main analytic ingredients in the proof of Theorem \ref{thm1}.

It is well known that the classical Hardy inequality in Euclidean space $\mathbb{R}^n$ asserts that for $n \ge 3$,
$$\int_{\mathbb{R}^n} \frac{|\phi|^2}{|x|^2}\, dx
\le \left( \frac{2}{n-2} \right)^2
\int_{\mathbb{R}^n} |\nabla \phi|^2\, dx$$
for all $\phi \in C_0^\infty(\mathbb{R}^n \setminus \{0\})$, and the constant $\left( \frac{2}{n-2} \right)^2$ is sharp.
Various analogues of this inequality have been established on Riemannian manifolds. 
In particular, Carron \cite{carron1997inegalites} proved weighted $L^2$-Hardy inequalities under suitable assumptions on a weight function $\rho$, including
$$ |\nabla\rho|=1, \quad \Delta\rho\geq \frac{C}{\rho}. $$
Kombe and \"Ozaydin \cite{kombe2009improved} later obtained related and more general Hardy-type inequalities. 
Taking $\rho=r(x)$, these results imply, for instance, that if
$$ \Delta r\geq \frac{\gamma}{r}, \quad \gamma>1, $$
then
$$ \int_\M \frac{|u|^2}{r^2}\,dx \leq \left(\frac{2}{\gamma-1}\right)^2 \int_\M |\nabla u|^2\,dx. $$
However, requiring such a positive global lower bound for $\Delta r$ is quite restrictive and is not suitable for our setting. 
Indeed, the distance function fails to be smooth on the cut locus, and G\'en\'erau \cite{generau2020laplacian} showed that $\Delta r=-\infty$ there in the barrier sense. 
This motivates the weaker Hardy-type assumption adopted in this paper.

By Lemma \ref{volume_growth}, every manifold satisfying the assumptions of Theorem \ref{thm1} has polynomial volume growth. The following corollary follows immediately from the result of Sung-Tsai \cite{sung2022polynomial}.

\begin{corollary}
Let $\M$ be a complete manifold as in Theorem \ref{thm1}. Then for every $l>0$, the space of ancient caloric $1$-forms with polynomial growth of order $l$ is finite dimensional.
\end{corollary}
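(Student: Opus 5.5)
The corollary is a direct application of Sung--Tsai \cite{sung2022polynomial}, so the plan has two parts. First, check that their hypotheses hold under the assumptions of Theorem \ref{thm1}. Second, feed in the finite-dimensionality of polynomial-growth harmonic $1$-forms from Theorem \ref{thm1}.

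\textbf{Polynomial volume growth.} Sung--Tsai require polynomial volume growth, and this is the content of Lemma \ref{volume_growth}. In short, taking $u$ to be a cutoff equal to $1$ on $B_p(R)$, supported in $B_p(2R)$, with $|\nabla u|\le 1/R$, in the scale-invariant Sobolev inequality (a) gives $\Vol(B_p(R))/\Vol(B_p(2R)) \ge c(n,C_S)>0$. This is a doubling-type bound, and iterating it over dyadic radii yields $\Vol(B_p(R))\le C R^{\mu}$ for some $\mu$ depending only on $n$ and $C_S$.

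\textbf{Structure theorem.} With polynomial volume growth in hand, Sung--Tsai's first result applies. Every ancient caloric $1$-form $\omega$ with $\sup_{[-R^2,0]\times B_p(R)}|\omega|\le C(1+R)^l$ is a polynomial in $t$ of degree at most $k=\lfloor l/2\rfloor$:
$$\omega(x,t)=\sum_{j=0}^{k}\omega_j(x)\,t^j .$$
Comparing powers of $t$ in $(\partial_t-\Delta)\omega=0$, where $\Delta$ is the Hodge Laplacian, gives $\Delta\omega_k=0$ and $\Delta\omega_{j}=(j+1)\omega_{j+1}$. Their growth estimates, obtained by evaluating at suitable times of size $R^2$, show that $\omega_j$ has polynomial growth of order at most $l-2j$. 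The map $\omega\mapsto(\omega_0,\dots,\omega_k)$ is injective.

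\textbf{Dimension bound.} Following Colding--Minicozzi and Sung--Tsai, I will give an inductive dimension bound. Let $V_j$ be the space of $1$-forms $\eta$ of growth order $\le 2j$ (plus the appropriate slack) with $\Delta^{j+1}\eta=0$. The operator $\Delta$ maps $V_j$ into $V_{j-1}$, and its kernel is exactly the harmonic $1$-forms of polynomial growth. Hence
$$\dim V_j\le \dim\mathcal{H}^1_{2j}+\dim V_{j-1},$$
so $\dim\le\sum_{i\le k}\dim(\text{harmonic }1\text{-forms of growth }\le l)$. Each term is finite by Theorem \ref{thm1}.

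\textbf{Main obstacle.} The one point needing care is that Sung--Tsai's finiteness statement is conditional on finite-dimensionality of polynomial-growth harmonic forms; they obtain that from curvature hypotheses, and here Theorem \ref{thm1} supplies it instead. This substitution only goes through if their argument uses curvature solely through that input and through volume growth. I expect it does: their structure theorem relies only on volume growth and a Caccioppoli/energy argument for forms, and I would check this explicitly. If the energy argument needed a mean value inequality, the Sobolev inequality (a) together with the Bochner formula and (b), (c) could provide it as in the proof of Theorem \ref{thm1}.
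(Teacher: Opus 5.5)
Your route is the paper's. Polynomial volume growth comes from Lemma \ref{volume_growth}, then Sung--Tsai's structure theorem and the Colding--Minicozzi type bound $\dim\le\sum_i\dim\mathcal{H}^1_{2i}$ apply, and Theorem \ref{thm1} makes each summand finite. However, the ``in short'' justification you give for volume growth is wrong, and it is not what Lemma \ref{volume_growth} does.

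Put $s=\Vol(B_p(R))/\Vol(B_p(2R))$ and insert your cutoff into (a) on $B_p(2R)$. You get
$$s^{\frac{n-2}{n}}\le\Big(\fint_{B_p(2R)}|u|^{\frac{2n}{n-2}}\Big)^{\frac{n-2}{n}}\le 4C_S\,\frac{\Vol(B_p(2R)\setminus B_p(R))}{\Vol(B_p(2R))}=4C_S(1-s).$$
This keeps $s$ away from $1$, which is reverse doubling and hence a \emph{lower} bound on growth. It does not keep $s$ away from $0$, so doubling and the upper bound $\Vol(B_p(R))\le CR^{\mu}$ do not follow.

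Lemma \ref{volume_growth} uses only (c). The comparison $\Delta r\le (n-1+\varepsilon)/r$ along minimizing radial geodesics gives $J(t,\theta)\le (t/s)^{n-1+\varepsilon}J(s,\theta)$. Hence $\Vol(B_p(2R))\le 2^{n+\varepsilon}\Vol(B_p(R))$, and the growth has order $n+\varepsilon$. If you want to derive growth from (a) instead, one cutoff is not enough. You must apply (a) on a fixed ball $B_x(R)$ to cutoffs of $B_x(2^{-i}R)$ for every $i$ and iterate; this gives $\Vol(B_x(\rho))\ge c(n,C_S)(\rho/R)^n\Vol(B_x(R))$. Since you cite the lemma anyway, simply drop the sketch.

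Two smaller points. First, as you wrote it, ``$\Delta$ maps $V_j$ into $V_{j-1}$'' needs $\Delta\eta$ to have growth $2j-2$ whenever $\eta$ has growth $2j$. That is not automatic; it would require a scale-invariant interior estimate for polyharmonic forms. There are two fixes:
\begin{itemize}
\item build the growth of every iterate into the definition, requiring $\Delta^i\eta$ to have growth at most $2j-2i$ for each $i$;
\item or argue as in Section \ref{sec7}, with the coefficient maps $\omega\mapsto\omega_k,\omega_{k-1},\dots$ on successive kernels. This uses the Vandermonde bound $|\omega_j|\le C(1+r^{2k-2j})$ of Lemma \ref{lem6.2}, whose proof carries over verbatim to forms.
\end{itemize}

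Second, your ``main obstacle'' resolves in your favour, which is why the paper says the corollary follows immediately. For the Hodge heat flow, the Caccioppoli estimates come from integrating by parts with $d$ and $\delta$, for example
$$\int\phi^2\langle\omega,\Delta_d\omega\rangle=-\int\big(\langle d(\phi^2\omega),d\omega\rangle+\langle\delta(\phi^2\omega),\delta\omega\rangle\big).$$
The analogous identity for $\partial_t\int\psi^2(|d\omega|^2+|\delta\omega|^2)$ works the same way. No Weitzenb\"ock curvature term ever appears, so Sung--Tsai's structure theorem uses only polynomial volume growth. Neither a mean value inequality nor (b) is needed.
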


Our main theorems are based on the Bochner formula, in which the curvature term appears as a potential term. From this point of view, the finite-dimensionality problem for harmonic forms is closely related to the study of Schrödinger-type operators with decaying potentials. The analytic core of the argument is governed by the behavior of the potential term rather than its geometric origin.

Motivated by this observation, we study stationary Schrödinger equations of the form $(\Delta + V)u = 0,$ where the potential $V$ satisfies assumptions analogous to those imposed on the curvature term in the geometric setting. By adapting the same strategy as in the proof of Theorem~\ref{thm1}, we obtain finite-dimensionality results for spaces of polynomial growth solutions.

\begin{theorem}\label{dim_Sch}
Let $(\M,g)$ be a complete $n$-dimensional Riemannian manifold satisfying assumptions (a)(b)(c) as in Theorem \ref{thm1}. And let
$$\mathcal{S}_l(\M):=\left\{u \in C^\infty(\M)\; \middle|\;\Delta u + V u = 0,\quad|u(x)| = O(r(x)^l)\ \text{as } r(x)\to\infty\right\},$$ where $V(x)$ is a smooth function such that $$0\leq V(x)\leq \frac{\varepsilon_0}{1+r^2(x)},$$and $\varepsilon_0$ is sufficiently small such that $\varepsilon_0 C_H<1$. Then $\dim\mathcal{S}_l(\M)<\infty$ for every $l>0$.
\end{theorem}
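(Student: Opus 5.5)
We follow the Colding--Minicozzi/Li strategy, which reduces finite-dimensionality to two ingredients: a mean value inequality and a polynomial volume growth bound. The only new difficulty is that $|u|^2$ need not be subharmonic when $V\ge 0$. For a solution $u\in\mathcal S_l(\M)$ we have
$$\tfrac12\Delta u^2=|\nabla u|^2+u\Delta u=|\nabla u|^2-Vu^2.$$
This is exactly the structure of the Bochner formula for a harmonic $1$-form $\omega$,
$$\tfrac12\Delta|\omega|^2\ge|\nabla\omega|^2-\frac{\varepsilon}{1+r^2}|\omega|^2,$$
with $V$ playing the role of the negative part of the Ricci curvature. So the plan is to run the same argument as for Theorem~\ref{thm1}, with $u$ in place of $\omega$.

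\textbf{Step 1: a weighted reverse-Poincar\'e (Caccioppoli) inequality.} Take a cutoff $\phi$ supported in $B_p(2R)$, multiply the equation by $\phi^2u$ and integrate. This gives
$$\int\phi^2|\nabla u|^2=\int V\phi^2u^2-2\int\phi u\nabla\phi\cdot\nabla u.$$
We control the potential term with the weak Hardy inequality (b) applied to $\phi u$:
$$\int V\phi^2u^2\le\varepsilon_0C_H\int|\nabla(\phi u)|^2.$$
Since $\varepsilon_0C_H<1$, this term can be absorbed into the left side, and we obtain
$$\int|\nabla(\phi u)|^2\le C(\varepsilon_0C_H)\int|\nabla\phi|^2u^2 .$$
The same absorption works on any ball or annulus.

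\textbf{Step 2: a mean value inequality at the natural scale.} This is where I expect the main obstacle. The Hardy inequality is global and anchored at $p$, whereas Moser iteration needs local estimates on balls $B_x(R)$ with possibly $r(x)\gg R$. On such balls, however, $V\le \varepsilon_0/(1+r^2)\lesssim R^{-2}$ whenever $R\sim r(x)$ or $B_x(R)\subset A(p,r(x)/2,2r(x))$. In that regime $R^2\sup V$ is bounded, so the standard Moser iteration using the scale-invariant Sobolev inequality (a) handles $V$ as a bounded lower-order term. This yields
$$\sup_{B_x(R/2)}u^2\le C\fint_{B_x(R)}u^2$$
for all $x$ and all $R\le c(1+r(x))$. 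For balls $B_p(R)$ centered at the base point, the Sobolev inequality still gives a mean value estimate with constant independent of $R$: $V$ is not small in $L^\infty$ at scale $R$, but $R^2V\le\varepsilon_0R^2/(1+r^2)$ lies in $L^{n/2+\delta}$-type scale-invariant classes uniformly. The alternative, which I would try first, is to combine Step 1 with Sobolev to run De Giorgi/Moser directly, using Hardy to absorb the $V$ term at each iteration step. Polynomial volume growth comes from Lemma~\ref{volume_growth}, and volume doubling of balls follows from (a) as usual.

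\textbf{Step 3: dimension counting.} With the mean value inequality and polynomial volume growth $\Vol(B_p(R))\le CR^{\mu}$ in hand, apply Li's lemma. For any $k$-dimensional subspace $K\subset\mathcal S_l(\M)$, choose an $L^2(B_p(R))$-orthonormal basis. The mean value inequality gives
$$\sum_i u_i^2(x)\le C\,\Vol(B_x(\delta(R-r(x))))^{-1}.$$
Integrating over $B_p(R)$ and using a growth-compatible choice of radii, as in \cite{li1997harmonic}, yields
$$k\le C\,(1+\beta)^{\mu}\,\frac{\int_{B_p((1+\beta)R)}\sum u_i^2}{\int_{B_p(R)}\sum u_i^2}.$$
Polynomial growth of order $l$ lets us select a sequence $R_j\to\infty$ along which this ratio is bounded by $(1+\beta)^{2l+\mu+1}$. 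Hence $\dim K$ is bounded independently of $K$, which proves $\dim\mathcal S_l(\M)<\infty$.
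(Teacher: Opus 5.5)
Your Step 1 is correct (it even gives the identity $\int|\nabla(\phi u)|^2=\int|\nabla\phi|^2u^2+\int V\phi^2u^2$). The first case of Step 2 is also correct: it is the paper's Theorem~\ref{mvi} applied to $u^2$, since $\Delta u^2+2Vu^2=2|\nabla u|^2\ge0$.

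The gap is the second case of Step 2, and Step 3 depends on it. Integrating $\sum_iu_i^2(x)\le C\,\Vol(B_x(\delta(R-r(x))))^{-1}$ over all of $B_p(R)$ needs a mean value inequality, with constant independent of $R$, on balls of radius $\sim\delta R$ around points with $r(x)\ll R$. That inequality is false. The potential is critical: on $\mathbb{R}^n$ with $V=\varepsilon_0/(1+|x|^2)$ one has $R^2\bigl(\fint_{B(R)}V^q\bigr)^{1/q}\approx\varepsilon_0R^{2-n/q}\to\infty$ for every $q>n/2$.

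Here is a concrete counterexample. $\mathbb{R}^n$ ($n\ge3$) satisfies (a), (c), and (b) with $C_H\le4/(n-2)^2$. Take $\varepsilon_0<(n-2)^2/4$ and let $u$ be the radial solution of $\Delta u+Vu=0$ with $u(0)=1$.
\begin{enumerate}
\item Testing the equation with $u$ on $B(r_1)$, where $r_1$ would be a first zero, and using Hardy shows $u>0$.
\item The relation $(r^{n-1}u')'=-r^{n-1}Vu<0$ makes $u$ decreasing.
\item If $u\to L>0$, then $u'\le-c/r$ for large $r$, which is impossible; hence $u\to0$.
\end{enumerate}
Thus $u$ is a bounded element of $\mathcal{S}_l(\M)$ for every $l>0$, yet
\[
\sup_{B(R/2)}u^2\ge1,\qquad \fint_{B(R)}u^2\longrightarrow0\quad(R\to\infty).
\]

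Your fallback of absorbing $V$ via Hardy inside the Moser iteration fails for the same reason. Testing with $\phi^2u^{2a-1}$ leaves the gradient term with coefficient $(2a-1)/a^2$, so absorption needs $\varepsilon_0C_Ha^2<2a-1$, which breaks down for large $a$. Shrinking the balls near $p$ to $B_x(c(1+r(x)))$ does not help either, since $\int_{B_p(R)}\Vol(B_x(c(1+r(x))))^{-1}dx\sim\log R$, and the bound on $k$ is lost.

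The missing idea is to use Step 1 to push the $L^2$ mass away from $p$, rather than seeking regularity near $p$. Take $\phi=1$ on $B_p(R)$, supported in $B_p((1+\lambda)R)$, with $|\nabla\phi|\le 2/(\lambda R)$. Then H\"older, the Sobolev inequality (a) on $B_p((1+\lambda)R)$, and Step 1 give
\[
\int_{B_p(R)}u^2\le C\,\frac{(1+\lambda)^2}{\lambda^2}\int_{B_p((1+\lambda)R)\setminus B_p(R)}u^2 .
\]
This is the Schr\"odinger version of Lemma~\ref{lem_appendix}. Combined with Li's orthonormalization and the growth-selection of $R$ (Lemma~\ref{lower_bound_esti_form}), it moves $\sum_i\int u_i^2$ onto the annulus $B_p((1+\lambda)R)\setminus B_p(R)$. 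There you only need the mean value inequality at scale $\lambda R$ around points with $r(x)\ge R$, which is your valid first case. You then integrate $\Vol(B_x(\lambda R))^{-1}$ only over the annulus, and Lemma~\ref{vol_comparison_far} bounds that integral by a constant. This is the paper's proof of Theorem~\ref{thm1}, which the paper says carries over verbatim to Theorem~\ref{dim_Sch}.
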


Since the argument is fundamentally energy-based, it also extends to the parabolic setting. More precisely, for the Schrödinger heat equation $(\partial_t - \Delta - V)u = 0,$ we derive a Caccioppoli-type inequality and obtain corresponding finite-dimensionality results for ancient solutions of polynomial growth.

\begin{theorem}\label{polyrepre}
Fix $l>0$. Let $(\M,g)$ be a complete $n$-dimensional Riemannian manifold with polynomial volume growth. Moreover, suppose $\M$ satisfies the assumption (b) in Theorem \ref{thm1}. 
And let $$\mathcal{SH}_l(\M)
:= \left\{u\in C^\infty((-\infty,0)\times\M)\middle|\,
\partial_tu-\Delta u-Vu=0, \quad
\sup_{[-R^2,0]\times B_p(R)}|u|\le C(1+R)^l\right\},$$
be the space of ancient solutions to the Schr\"odinger heat equation with polynomial growth of order $l$, where $V$ is a smooth function such that $$0\leq V(x)\leq\frac{\varepsilon_0}{1+r^2(x)},$$ where $\varepsilon_0$ is sufficiently small such that $\varepsilon_0 C_H<1$.
 Then every $u\in\mathcal{SH}_l(\M)$ can be expressed as a polynomial in time $t$.
\end{theorem}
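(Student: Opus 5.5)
The plan follows the Lin--Zhang / Colding--Minicozzi strategy for ancient caloric functions, with the Hardy inequality (b) used to absorb the potential. The goal is to show that a high enough time derivative $\partial_t^k u$ vanishes identically. Since $\partial_t^k u$ again solves $\partial_t w-\Delta w-Vw=0$ ($V$ is time-independent), this makes $u$ a polynomial in $t$ of degree less than $k$.

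\textbf{Step 1: a Caccioppoli (reverse Poincaré) inequality.} For a solution $w$, multiply the equation by $w\phi^2$, with $\phi$ a space--time cutoff supported in $Q_{2R}=[-4R^2,0]\times B_p(2R)$, equal to $1$ on $Q_R$, with $|\nabla\phi|\le C/R$ and $|\partial_t\phi|\le C/R^2$. After integrating by parts, the term $\int Vw^2\phi^2$ appears on the wrong side. On each time slice, apply (b) to $w\phi\in C_0^\infty(\M)$:
$$\int V w^2\phi^2\le \varepsilon_0\int\frac{(w\phi)^2}{1+r^2}\le \varepsilon_0 C_H\int|\nabla(w\phi)|^2 .$$
Expanding $|\nabla(w\phi)|^2=\phi^2|\nabla w|^2+2w\phi\nabla w\cdot\nabla\phi+w^2|\nabla\phi|^2$ and using Young's inequality, the condition $\varepsilon_0C_H<1$ lets us absorb this term into $(1-\varepsilon_0C_H-\delta)\int\phi^2|\nabla w|^2$. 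The result is
$$\int_{Q_R}|\nabla w|^2\le \frac{C}{R^2}\int_{Q_{2R}}w^2 .$$

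\textbf{Step 2: control of the time derivative.} Multiply the equation by $\partial_t w\,\phi^2$. This uses the energy $E(w)=\int(|\nabla w|^2-Vw^2)$, which is nonnegative up to the cutoff error by the same Hardy absorption, and gives
$$\int_{Q_R}|\partial_t w|^2\le \frac{C}{R^2}\int_{Q_{2R}}\big(|\nabla w|^2+Vw^2\big)+\frac{C}{R^4}\int_{Q_{2R}}w^2 .$$
Bound $Vw^2$ again by Hardy and Step 1 on a slightly larger cylinder. Combining the two steps yields
$$\int_{Q_R}|\partial_t w|^2\le \frac{C}{R^4}\int_{Q_{4R}}w^2 .$$
Since $V\le \varepsilon_0/(1+r^2)\le \varepsilon_0 R^{-2}$ fails near $p$, we use Hardy rather than pointwise bounds; that is why (b) is needed. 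Iterating $k$ times with radii $R,4R,\dots,4^kR$ gives
$$\int_{Q_R}|\partial_t^k u|^2\le \frac{C_k}{R^{4k}}\int_{Q_{4^kR}}u^2\le C_k R^{-4k}\,R^{2}\,\Vol(B_p(4^kR))\,R^{2l}.$$

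\textbf{Step 3: conclusion.} Polynomial volume growth gives $\Vol(B_p(\rho))\le C\rho^{d}$. For $4k>2+2l+d$, the right-hand side tends to $0$ as $R\to\infty$, while the left-hand side is nondecreasing in $R$. Hence $\partial_t^k u\equiv0$ and $u=\sum_{j<k}t^j u_j(x)$.

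\textbf{Main obstacle.} The hard part is Step 2. The time-derivative estimate must handle the cross terms $\int \partial_t w\,\nabla w\cdot\nabla\phi\,\phi$ and the indefinite potential energy without pointwise control of $V$ near the origin. Also, the Hardy inequality must be applied slice by slice to compactly supported functions, which forces careful cutoff bookkeeping. If Step 2 proves difficult, the fallback is a cruder bound. Since $\partial_t w$ is also a solution, one can use Step 1 for $\partial_t w$ together with the identity
$$\frac{d}{dt}\int w^2\phi^2=-2\int|\nabla w|^2\phi^2+2\int Vw^2\phi^2+\text{(cutoff terms)}$$
to trade one time derivative for $R^{-2}$, in the manner of Lin--Zhang.
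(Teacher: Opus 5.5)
Your proposal is correct and follows essentially the paper's argument (Lemma \ref{lem5.1} and Corollary \ref{cor5.1}). The structure is the same: an energy bound for $\nabla u$ with the potential absorbed slice by slice via (b), a bound for $u_t$ in which the $t=0$ energy $\int\phi^2(|\nabla w|^2-Vw^2)$ is made nonnegative up to cutoff error by Hardy, and then iteration on $\partial_t^k u$ against the polynomial growth of $u$ and of the volume. The differences are cosmetic: you use a space--time cutoff vanishing at the initial time, where the paper picks a good initial slice $t=-R_1^2$. Note that the $t=0$ slice term $\int_{B_{2R}}w^2(\cdot,0)$ you need in Step 2 comes from the same Step 1 identity, as in \eqref{5.8}. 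Also, the bulk term $-\iint\phi\,\partial_t\phi\,Vw^2$ has a good sign, so your extra Hardy bound on $\int Vw^2$ is unnecessary.
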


Using a standard linear algebra argument (see section~\ref{sec7}), we further obtain the finiteness of the solution space $\mathcal{SH}_l(\M)$.

\begin{theorem}\label{dim_est_anc}
Let $\M$ be a complete manifold satisfying assumptions (a)(b)(c) as in Theorem \ref{thm1} and $\mathcal{SH}_l(\M)$ be as in Theorem \ref{polyrepre}. Then $\dim\mathcal{SH}_l(\M)<\infty$ for every $l>0$.
\end{theorem}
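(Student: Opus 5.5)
The plan is to combine the polynomial-in-time representation of Theorem \ref{polyrepre} with the elliptic finiteness of Theorem \ref{dim_Sch}, in the spirit of Colding--Minicozzi \cite{colding2021optimal} and Lin--Zhang \cite{lin2019ancient}.

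First, by Lemma \ref{volume_growth}, assumptions (a)(b)(c) imply polynomial volume growth. Theorem \ref{polyrepre} therefore applies, and every $u\in\mathcal{SH}_l(\M)$ can be written as
\[
u(t,x)=\sum_{j=0}^{k}u_j(x)t^j .
\]
The degree $k$ should be bounded in terms of $l$ alone, with $k\le \lfloor l/2\rfloor$. I would obtain this bound from the growth condition: on $[-R^2,0]\times B_p(R)$, use Lagrange interpolation in $t$ at $k+1$ distinct times of size comparable to $R^2$. This expresses each $u_j(x)$ as a fixed linear combination of values $u(t_i,x)$, with coefficients of size $R^{-2j}$. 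Hence
\[
\sup_{B_p(R)}|u_j|\le C(1+R)^{l-2j}.
\]
If $2k>l$, the top coefficient $u_k$ then decays, and the equation (below) should force $u_k=0$. If the proof of Theorem \ref{polyrepre} already gives this degree bound, I would simply cite it.

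Second, substitute the expansion into $\partial_t u=\Delta u+Vu$ and compare coefficients of $t^j$. This gives
\[
(\Delta+V)u_k=0,\qquad (\Delta+V)u_{j}=(j+1)u_{j+1}\quad (0\le j<k).
\]
So $u_k\in\mathcal{S}_{l}(\M)$, and more precisely it has growth order $l-2k$. Consider the linear map
\[
\Phi_k:\{u\in\mathcal{SH}_l: \deg_t u\le k\}\to \mathcal{S}_{l}(\M),\qquad u\mapsto u_k .
\]
Its kernel is the space of solutions of degree at most $k-1$. Induction on $k$ then gives
\[
\dim\{\deg\le k\}\le \dim\{\deg\le k-1\}+\dim\mathcal{S}_l(\M).
\]
The base case is $\dim\{\deg\le 0\}=\dim\mathcal{S}_l(\M)$, since time-independent solutions are exactly elements of $\mathcal{S}_l$. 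Hence
\[
\dim\mathcal{SH}_l(\M)\le (\lfloor l/2\rfloor+1)\dim\mathcal{S}_l(\M)<\infty
\]
by Theorem \ref{dim_Sch}. This is the standard linear algebra argument referred to in Section \ref{sec7}.

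The main obstacle is the uniform bound on the time degree $k$ in terms of $l$. Without it, the induction does not terminate uniformly across the whole space. Since Theorem \ref{polyrepre} only asserts that each $u$ is some polynomial in $t$, I would establish the bound via the interpolation argument above. To kill the top coefficients when $2k>l$, I would show that a solution of $(\Delta+V)w=0$ with $w=o(1)$ at infinity vanishes. Suppose $w$ is such a solution. Test the equation against $w\phi^2$ for cutoffs $\phi$ and use the Hardy inequality (b) with $\varepsilon_0C_H<1$ to absorb $\int Vw^2\phi^2$. This yields
\[
(1-\varepsilon_0C_H)\int|\nabla (w\phi)|^2\le \int w^2|\nabla\phi|^2 .
\]
I expect the right side to tend to zero for suitable (logarithmic) cutoffs, given polynomial volume growth and the decay of $w$; this last point is the delicate one. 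If it goes through, $\nabla w=0$, so $w$ is constant, and decay gives $w=0$. Alternatively, I would absorb all coefficients into a coarser bound $k\le l$, which still gives finiteness.
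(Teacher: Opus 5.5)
Your argument is essentially the paper's proof of Lemma \ref{lem6.1}. You use the same ingredients: the polynomial-in-$t$ representation, the recursion $(\Delta+V)u_k=0$, $(\Delta+V)u_j=(j+1)u_{j+1}$, and peeling off top coefficients into $\mathcal{S}_l(\M)$, which gives $\dim\mathcal{SH}_l(\M)\le(\lfloor l/2\rfloor+1)\dim\mathcal{S}_l(\M)<\infty$ via Theorem \ref{dim_Sch}.

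Your ``main obstacle'' is not one, though. The interpolation bound controls $\sup_{B_p(R)}|u_j|$ on the whole ball, so fixing $x$ and letting $R\to\infty$ already gives $u_j(x)=0$ for $2j>l$; this is the paper's ``fix $x$, let $t\to-\infty$'' step. The Liouville detour is unnecessary, and it would not close as written: with $\Vol(B_p(R))\sim R^n$, $n\ge3$, as in Section \ref{subsec5.1}, $\int w^2|\nabla\phi|^2$ need not vanish for merely $w=o(1)$, even with logarithmic cutoffs. Uniformity of the degree is in any case supplied by Corollary \ref{cor5.1}.
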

This paper is organized as follows. 
Section~\ref{sec2} introduces some basic preliminaries. 
Section~\ref{sec3} uses an iteration argument to derive a mean value inequality far away from the origin.  
Section~\ref{sec4} is devoted to the proofs of Theorems~\ref{thm1} and \ref{dim_Sch}. 
Section~\ref{sec5} constructs several examples.
Section~\ref{sec6} proves Theorem~\ref{polyrepre}, and 
Section~\ref{sec7} proves Theorem~\ref{dim_est_anc}.

\section{Preliminaries}\label{sec2}
In this section, we introduce some basic knowledge that is needed in our proof.
\subsection{Differential forms on Riemannian manifolds}
Let $\M$ be a complete Riemannian manifold, $\{e_i\}$ be its orthonormal frame and $\{\omega^i\}$ be its dual frame. 
We denote $R_{ijkl}=\langle Rm(e_i,e_j)e_l,e_k\rangle$ the components on Riemann curvature tensor. 
The Ricci tensor is given by $R_{ij}=\sum\limits_{k=1}^nR_{ikjk}$. 

Let $\Lambda^p(\M)$ be the linear space of $p$-forms on $\M$ , and $d:\Lambda^p(\M)\rightarrow\Lambda^{p+1}(\M)$ be the exterior differential operator. 
Recall that the Hodge star operator $\ast :\Lambda^p(\M)\rightarrow\Lambda^{n-p}(\M)$ is defined by $$\ast(\omega^{i_p}\wedge\cdots\wedge\omega^{i_1})=sgn(\sigma(I,I^C))\omega^{i_n}\wedge\cdots\wedge\omega^{i_{p+1}},$$
where $\sigma(I,I^C)$ denotes a permutation by sending $$(i_p,\cdots,i_1,i_n,\cdots,i_{p+1})\mapsto(1,2,\cdots,n).$$
The codifferential operator $\delta:\Lambda^p(\M)\rightarrow\Lambda^{p-1}(\M)$ is defined by $\delta=(-1)^{np+n+1}\ast d\ast$. 
It is well known that  when $\M$ is compact, $\delta$ is the adjoint operator of $d$ under the inner product on $\Lambda^p(\M)$, 
i.e., for any $\omega\in\Lambda^{p-1}(\M)$ and $\eta\in\Lambda^p(\M)$, 
we have $$\int\limits_\M\langle d\omega,\eta\rangle dx=\int\limits_\M\langle\omega,\delta\eta\rangle dx,$$ 
where $dx$ is the volume form on $\M$. We define the Hodge Laplacian by $\Delta_d\omega=-(d\delta+\delta d)\omega$.

Let $\omega\in\Lambda^p(\M)$ be an exterior $p$-form on $\M$. Then under orthogonal basis, we write $\omega=a_{i_1\cdots i_p}\omega^{i_p}\wedge\cdots\wedge\omega^{i_1}$. 
If we write $E(\omega)=R_{k_\beta i_\beta j_\alpha i_\alpha}a_{i_1\cdots k_\beta\cdots i_\alpha\cdots i_p}\omega^{i_p}\wedge\cdots\wedge\omega^{j_\alpha}\wedge\cdots\wedge\omega^{i_1}$, then we have the following formula, known as the Bochner-Weitzenb\"ock formula (see \cite[Lemma 3.4]{li2012geometric}, for instance):
$$\Delta|\omega|^2=2\langle\Delta_d\omega,\omega\rangle+2|\nabla\omega|^2+2\langle E(\omega),\omega\rangle.$$ We remark here that when $\omega=a_i\omega^i\in\Lambda^1(\M)$, then $E(\omega)=R_{ij}a_i\omega^j$, and hence $\langle E(\omega),\omega\rangle=\text{Ric}(\omega,\omega)$.

Now we will introduce the concept of curvature operator.
\begin{definition}
    The curvature operator of a Riemannian manifold is a linear map $\mathcal{R}:\Lambda^2(\M)\rightarrow\Lambda^2(\M)$ given by $$\mathcal{R}(\omega^i\wedge\omega^j)=R_{jikl}\omega^l\wedge\omega^k.$$
\end{definition}
Note that the curvature operator is a symmetric operator on linear space $\Lambda^2(\M)$, thus its eigenvalues are real. We emphasize that assumptions on curvature operator are usually strong assumptions on manifold. For example, if the curvature operator has a lower (resp. upper) bound, then by definition, the sectional curvature has the same lower (resp. upper) bound.
\subsection{A volume growth lemma}

In this subsection we will study the volume growth of the manifold. The following lemma shows that a complete manifold satisfying
$$\Ric \geq -\frac{\varepsilon}{1+r^2(x)}$$
has polynomial volume growth. Although this result is likely well known, we include a proof for the reader's convenience and for the sake of completeness.

\begin{lemma}\label{volume_growth}
    Suppose $\M$ is a complete manifold with $\Ric\geq-\frac{\varepsilon}{1+r^2(x)}$, where $r(x)=d(x,p)$ is the distance function to a fixed point $p\in\M$. Then we have the following geometric property:
    \begin{itemize}
        \item [(i)] Volume doubling property: for any $R>0$, we have \begin{equation}
            \Vol(B_p(2R))\leq 2^{n+\varepsilon}\Vol(B_p(R));
        \end{equation}
        \item [(ii)] Polynomial volume growth: for any $R>1$, we have \begin{equation}
            \Vol(B_p(R))\leq 2^{n+\varepsilon}\Vol(B_p(1))R^{n+\varepsilon}.
        \end{equation}
    \end{itemize}
\end{lemma}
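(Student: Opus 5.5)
We will prove both statements with a Bishop--Gromov-type comparison argument centered at $p$, exploiting that along every minimal geodesic from $p$ the curvature lower bound is integrable against the weight $t$. Fix a unit vector $\theta\in T_p\M$ and write the volume form in geodesic polar coordinates as $\mathcal{A}(t,\theta)\,dt\,d\theta$, defined for $t$ less than the cut time $c(\theta)$. Along $\gamma_\theta(t)$ we have $r(\gamma_\theta(t))=t$, so the hypothesis gives $\Ric(\gamma',\gamma')\geq-\frac{\varepsilon}{1+t^2}$. With $m(t)=\Delta r=(\log\mathcal{A})'$, the Riccati inequality from the Bochner formula applied to $r$ reads
$$m'+\frac{m^2}{n-1}\leq \frac{\varepsilon}{1+t^2}.$$

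The key step is to show $m(t)\leq \frac{n-1+\varepsilon}{t}$, or more conveniently that $t\mapsto \mathcal{A}(t,\theta)/t^{n-1+\varepsilon}$ is nonincreasing. We will set $\psi(t)=t\,m(t)$. Since $\frac{\varepsilon}{1+t^2}\leq\frac{\varepsilon}{t^2}$, we get
$$t\psi'=t^2m'+tm\leq \varepsilon-\frac{\psi^2}{n-1}+\psi.$$
Near $t=0$ we have $\psi\to n-1$. The right-hand side is negative once $\psi>\psi_+$, where $\psi_+$ is the positive root of $\psi^2-(n-1)\psi-(n-1)\varepsilon=0$. A short computation gives $\psi_+\leq n-1+\varepsilon$: plugging in $\psi=n-1+\varepsilon$ yields $\varepsilon^2+(n-1)\varepsilon-(n-1)\varepsilon\geq0$. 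A standard barrier/ODE comparison then gives $\psi(t)\leq n-1+\varepsilon$ for all $t<c(\theta)$. Hence
$$(\log\mathcal{A})'\leq\frac{n-1+\varepsilon}{t},$$
which shows that $\mathcal{A}(t,\theta)/t^{n-1+\varepsilon}$ is nonincreasing. Extending $\mathcal{A}$ by zero past the cut time preserves this monotonicity. This step, namely handling the initial behavior and the cut locus carefully, is the main technical point. An alternative is to work in the distributional/barrier sense for $\Delta r$, but the polar-coordinate argument suffices.

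Next we apply the standard Gromov lemma: if $f(t)/t^{N-1}$ is nonincreasing with $N=n+\varepsilon$, then $\int_0^{R}f\big/\int_0^{r}f$ is bounded by $R^N/r^N$ for $r<R$. Integrating over $\theta$ gives that $R\mapsto\Vol(B_p(R))/R^{n+\varepsilon}$ is nonincreasing. Comparing the radii $2R$ and $R$ gives (i), and in fact the sharper constant $2^{n+\varepsilon}$ comes directly from this ratio. For (ii), we take $r=1$ to get $\Vol(B_p(R))\leq\Vol(B_p(1))R^{n+\varepsilon}$ for $R>1$, which is stronger than the stated bound. Alternatively, (ii) follows by iterating (i) over dyadic scales $2^{k-1}<R\leq2^k$, which produces exactly the extra factor $2^{n+\varepsilon}$ in the statement.

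Note that the doubling is only claimed for balls centered at $p$. This matters because the curvature bound holds only radially from $p$, so the comparison along geodesics from $p$ is exactly what is available.
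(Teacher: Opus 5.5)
Your proposal is correct and follows essentially the paper's route: a Laplacian comparison $\Delta r\le\frac{n-1+\varepsilon}{r}$ along minimizing radial geodesics from $p$, then monotonicity of $\mathcal{A}(t,\theta)/t^{n-1+\varepsilon}$ and integration in polar coordinates. The paper obtains the comparison by integrating the Riccati inequality against $s^2$ (giving $\frac{n-1+\varepsilon}{r}-\frac{\varepsilon\arctan r}{r^2}$) rather than by your barrier argument for $t\,\Delta r$, and it iterates the doubling estimate over dyadic scales, whereas your full Gromov monotonicity of $\Vol(B_p(R))/R^{n+\varepsilon}$ gives (ii) without the extra factor $2^{n+\varepsilon}$.
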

\begin{proof}
For each $\theta\in S_p(\M)$, we define $c(\theta)$ the cut radius in the direction $\theta$.    
    
(i) Along the minimizing radial geodesic $\gamma_\theta$, for $0<r<c(\theta)$, we have \begin{equation}
        \begin{aligned}
            \Delta r&\leq\frac{n-1}{r}-\frac{1}{r^2}\int_0^rs^2\Ric(\pa_s,\pa_s)ds\\
            &\leq \frac{n-1}{r}+\frac{\varepsilon}{r^2}\int_0^r\frac{s^2}{1+s^2}ds\\
            &=\frac{n-1+\varepsilon}{r}-\frac{\varepsilon\arctan r}{r^2}.
        \end{aligned}
    \end{equation}
    Notice that $\Delta r=\frac{d}{dr}\log\sqrt{\det g}$, thus integration on $[r,R]\subseteq(0,c(\theta))$, we have \begin{equation}
        \log\sqrt{\det g}(R,\theta)-\log\sqrt{\det g}(r,\theta)\leq\int_r^{R}(\frac{n-1+\varepsilon}{t}-\frac{\varepsilon\arctan t}{t^2})dt\leq (n-1+\varepsilon)\log\big(\frac{R}{r}\big),
    \end{equation}
    which is \begin{equation}\label{eq2.5}
        \frac{\sqrt{\det g}(R,\theta)}{\sqrt{\det g}(r,\theta)}\leq \big(\frac{R}{r}\big)^{n-1+\varepsilon}.
    \end{equation}
    Now we define $$J(r,\theta):=\bigg\{\begin{aligned}
        &\sqrt{\det g}(r,\theta),\quad 0<r<c(\theta)\\
        &0,\quad r\geq c(\theta)
    \end{aligned}.$$ Then by (\ref{eq2.5}), we have $J(t,\theta)\leq\big(\frac{t}{s}\big)^{n-1+\varepsilon}J(s,\theta)$. 
    Since $$\Vol(B_p(R))=\int_{S_p(\M)}\int_0^RJ(r,\theta)drd\theta,$$ 
    therefore, we have \begin{equation}
       \begin{aligned}
            \frac{\Vol(B_p(2R))}{\Vol(B_p(R))}&\leq1+\frac{\int_{\mathbb{S}^{n-1}}\int_R^{2R}J(r,\theta)drd\theta}{\int_{\mathbb{S}^{n-1}}\int_0^RJ(r,\theta)drd\theta}\\
            &\leq 1+\frac{\int_{\mathbb{S}^{n-1}}\int_R^{2R}\big(\frac{r}{R}\big)^{n-1+\varepsilon}J(R,\theta)drd\theta}{\int_{\mathbb{S}^{n-1}}\int_0^{R}\big(\frac{r}{R}\big)^{n-1+\varepsilon}J(R,\theta)drd\theta}\\
            &=2^{n+\varepsilon},
       \end{aligned}
    \end{equation}
    which shows the volume doubling property holds.

    (ii) For $R>1$, there exists a unique integer $k$ such that $2^{k-1}<R\leq 2^k$. Thus by applying the volume doubling property $k$-times, we have \begin{equation}
        \Vol(B_p(R))\leq \Vol(B_p(2^k))\leq 2^{n+\varepsilon}\Vol(B_p(2^{k-1}))\leq\cdots\leq 2^{k(n+\varepsilon)}\Vol(B_p(1)).
    \end{equation}
    Since $k<\log_2R+1$, we obtain \begin{equation}
        \Vol(B_p(R))\leq 2^{(\log_2R+1)(n+\varepsilon)}\Vol(B_p(1))=2^{n+\varepsilon}\Vol(B_p(1))R^{n+\varepsilon},
    \end{equation}which shows that $\M$ has polynomial volume growth.
\end{proof}

\section{Mean value inequality far from origin}\label{sec3}
In this section, we establish a local mean value inequality, which plays an important role in our proof. We remark that this inequality holds for balls centered sufficiently far away from the origin.
\begin{theorem}\label{mvi}
    Let $\M$ be a complete manifold as in Theorem \ref{thm1}. Suppose $u\in C^\infty(\M)$ is a nonnegative smooth function satisfying $$\Delta u+\frac{2\varepsilon}{1+r^2}u\geq0,$$  then for all $R>0,0<\theta<\tau<\frac{1}{4},k>0$ and for all $x\in B_p(4R)\backslash B_p(R)$, there exists a constant $C>0$ such that the following mean value inequality holds:\begin{equation}
        \sup\limits_{B_x(\theta R)}|u|\leq C\bigg(\fint_{B_x(\tau R)}u^kdx\bigg)^{\frac{1}{k}}.
    \end{equation}
\end{theorem}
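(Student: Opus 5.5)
The plan is to run a standard Moser iteration on balls $B_x(\tau R)$ with $x\in B_p(4R)\setminus B_p(R)$. The point of working away from the origin is that on such a ball the potential is comparable to a constant at the natural scale. Indeed, for $y\in B_x(\tau R)$ with $\tau<\frac14$ we have $r(y)\geq R-\tau R\geq \frac34 R$. Hence
$$0\leq q(y):=\frac{2\varepsilon}{1+r^2(y)}\leq \frac{32\varepsilon}{9R^2},$$
so $q\leq C_0R^{-2}$ on $B_x(\tau R)$, with $C_0$ depending only on $\varepsilon$. On such balls the inequality $\Delta u+qu\geq 0$ is therefore a subsolution inequality with a bounded zeroth-order term at scale $R^{-2}$. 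The scale-invariant Sobolev inequality (a) is exactly what Moser iteration needs. Note that the Hardy inequality (b) is not needed for this local statement.

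\textbf{Caccioppoli step.} For a cutoff $\phi\in C_0^\infty(B_x(\rho))$ with $\rho\leq\tau R$, and for $\beta\geq1$, I test the inequality against $\phi^2u^{2\beta-1}$. Integrating by parts and using $q\leq C_0R^{-2}\leq C_0\tau^2\rho^{-2}\cdot(\rho/\tau R)^{2}\cdot\ldots$, more simply $q\leq C_0R^{-2}$ with $\rho\leq R$, gives
$$\int|\nabla(\phi u^\beta)|^2\leq C\beta\int\big(|\nabla\phi|^2+C_0R^{-2}\phi^2\big)u^{2\beta}.$$
I then apply (a) on $B_x(\rho)$ and take nested balls $\rho_j=\theta R+2^{-j}(\tau-\theta)R$, with $|\nabla\phi_j|\leq C2^j/((\tau-\theta)R)$ and exponents $\beta_j=\chi^j$, where $\chi=\frac{n}{n-2}$. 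The $R^{-2}$ factors cancel against the $R^2$ in (a), so every constant is scale-invariant. One more point needs care. The averages $\fint$ over the shrinking balls $B_x(\rho_j)$ must be compared with one another, since $\fint_{B_x(\rho_{j+1})}\leq \frac{\Vol B_x(\rho_j)}{\Vol B_x(\rho_{j+1})}\fint_{B_x(\rho_j)}$. This requires volume doubling at $x$ between radii comparable to $\theta R$ and $\tau R$. Lemma~\ref{volume_growth} gives doubling only at $p$, but assumption (a) gives it directly: the scale-invariant Sobolev inequality on balls implies a uniform lower volume ratio bound $\Vol B_x(s)/\Vol B_x(t)\geq c(s/t)^n$ for $s<t$, by the standard argument of testing (a) with cutoffs. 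Iterating then yields the estimate for $k=2$:
$$\sup_{B_x(\theta R)}u\leq C(n,C_S,\varepsilon,\theta,\tau)\Big(\fint_{B_x(\tau R)}u^2\Big)^{1/2}.$$

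\textbf{Arbitrary $k>0$.} For $k\geq2$ the case $k=2$ combined with Hölder suffices. For $0<k<2$ I use the usual absorption trick, which is the main technical obstacle. Apply the $k=2$ estimate between radii $s<t$ in $[\theta R,\tau R]$, with constant $C(t-s)^{-n/2}R^{n/2}$ carried explicitly. Then bound
$$\int u^2\leq \big(\sup u\big)^{2-k}\int u^k$$
and use Young's inequality to get
$$\sup_{B_s}u\leq\tfrac12\sup_{B_t}u+C\Big(\frac{R}{t-s}\Big)^{n/k}\Big(\fint_{B_x(\tau R)}u^k\Big)^{1/k}.$$
The standard iteration lemma for monotone functions then removes the $\frac12\sup$ term. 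Here it is essential that the constants are polynomial in $R/(t-s)$, and the volume comparison from (a) provides exactly that. For $\beta<1$ (which is not needed here) the testing would fail, so the choice of absorption over a direct negative-power iteration is deliberate.

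Altogether the constant $C$ depends only on $n$, $C_S$, $\varepsilon$, $\theta$, $\tau$ and $k$, and not on $R$ or $x$.
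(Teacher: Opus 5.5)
Your proposal is correct and follows essentially the same route as the paper. Both arguments run a Moser iteration on $B_x(\tau R)$ using the pointwise bound $\frac{2\varepsilon}{1+r^2}\le\frac{32\varepsilon}{9R^2}$ there together with the scale-invariant Sobolev inequality (a), and both handle $0<k<2$ by Young's inequality followed by the standard iteration lemma. The differences are minor and both variants work: you bound the volume ratios $\Vol B_x(\rho_j)/\Vol B_x(\rho_{j+1})$ via the lower volume-ratio bound implied by (a), where the paper uses Bishop--Gromov with the local bound $\Ric\ge-\frac{16\varepsilon}{9R^2}$ from (c), and you obtain $k\ge2$ from $k=2$ by H\"older rather than starting the iteration at exponent $k$.
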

\begin{proof}
    First, we will prove the case when $k\geq 2$. Let $\phi$ be a cut-off function, which will be chosen later. For $a\geq1$, take $\phi^2u^{2a-1}$ as a test function, we obtain \begin{equation}
        \begin{aligned}
            0&\leq\int_\M\phi^2u^{2a-1}(\Delta u+\frac{2\varepsilon}{1+r^2}u)dx\\
            &=-\int_\M\langle 2\phi u^{2a-1}\nabla \phi+(2a-1)u^{2a-2}\phi^2\nabla u,\nabla u\rangle dx+\int_\M\frac{2\varepsilon}{1+r^2}\phi^2u^{2a}dx.
        \end{aligned}
    \end{equation}
    Choose $\phi$ such that $$\phi(y)=\bigg\{\begin{aligned}
        &1,\quad y\in B_x(\theta R)\\
        &0,\quad y\in B_x^C(\tau R)
    \end{aligned},\quad |\nabla \phi|\leq \frac{2}{(\tau-\theta)R}.$$
   By applying the triangle inequality, we obtain that $r(y)\geq (1-\tau)R$, thus by Cauchy-Schwarz inequality, we obtain  \begin{equation}
    \begin{aligned}
          (2a-1-\frac{1}{\delta_0})&\int_\M\phi^2u^{2a-2}|\nabla u|^2dx-\delta_0\int_\M u^{2a}|\nabla\phi|^2dx\\&\leq \int_\M\langle 2\phi u^{2a-1}\nabla \phi+(2a-1)u^{2a-2}\phi^2\nabla u,\nabla u\rangle dx\\
          &\leq\int_\M\frac{2\varepsilon}{1+r^2}\phi^2u^{2a}dx\leq\frac{2\varepsilon}{(1-\tau)^2R^2}\int_{B_x(\tau R)}u^{2a}(y)dy\leq\frac{32\varepsilon}{9R^2}\int_{B_x(\tau R)}u^{2a}(y)dy,
    \end{aligned}
    \end{equation} where $\delta_0 $ is a constant to be determined. Thus we obtain 
    \begin{equation}
        (2a-1-\frac{1}{\delta_0})\int_\M \phi^2u^{2a-2}(y)|\nabla u|^2(y)dy\leq \bigg(\frac{32\varepsilon}{9R^2}+\frac{4\delta_0}{(\tau-\theta)^2R^2}\bigg)\int_{B_x(\tau R)}u^{2a}(y)dy.
    \end{equation}
    Now let $w=u^a$, then this inequality becomes \begin{equation}
        \frac{2a-1-\frac{1}{\delta_0}}{a^2}\int_\M\phi^2|\nabla w|^2dy\leq\bigg(\frac{4\delta_0}{(\tau-\theta)^2}+\frac{32\varepsilon}{9}\bigg)\frac{1}{R^2}\int_{B_x(\tau R)}w^2dy
    \end{equation}
    By Cauchy-Schwarz inequality, we have \begin{equation}
        \begin{aligned}
            \frac{2a-1-\frac{1}{\delta_0}}{a^2}&\int_\M |\nabla(\phi w)|^2dx\leq\frac{2(2a-1-\frac{1}{\delta_0})}{a^2}\int_\M w^2|\nabla\phi|^2dx+\frac{2(2a-1-\frac{1}{\delta_0})}{a^2}\int_\M \phi^2|\nabla w|^2dx\\
            &\leq \bigg[\frac{8\delta_0}{(\tau-\theta)^2}+\frac{64\varepsilon}{9}+\frac{8(2a-1-\frac{1}{\delta_0})}{(\tau-\theta)^2a^2}\bigg]\frac{1}{R^2}\int_{B_x(\tau R)}w^2(y)dy.
        \end{aligned}
    \end{equation}
    By $L^2$-Sobolev inequality, we have \begin{equation}
        \frac{2a-1-\frac{1}{\delta_0}}{a^2}\int_{B_x(\tau R)}|\nabla(\phi w)|^2dy\geq \frac{2a-1-\frac{1}{\delta_0}}{a^2}\Vol(B_x(\tau R))\frac{1}{C_S\tau^2R^2}\bigg(\fint_{B_x(\tau R)}(\phi w)^{\frac{2n}{n-2}}dy\bigg)^{\frac{n-2}{n}}.
            \end{equation}
     Thus, we obtain \begin{equation}
    \begin{aligned}
    	\frac{2a-1-\frac{1}{\delta_0}}{a^2C_S\tau^2}&\bigg(\fint_{B_x(\theta R)}w^{\frac{2n}{n-2}}dy\bigg)^{\frac{n-2}{n}}\leq \bigg[\frac{64\varepsilon}{9}+\frac{8\delta_0}{(\tau-\theta)^2}+\frac{8(2a-1-\frac{1}{\delta_0})}{a^2(\tau-\theta)^2}\bigg]\\
    	&\times\bigg(\frac{\Vol(B_x(\tau R))}{\Vol(B_x(\theta R))}\bigg)^{\frac{n-2}{n}}\fint_{B_x(\tau R)}w^2dy
    \end{aligned}
            \end{equation}
    Pick $\delta_0=\frac{2}{a}$ and notice that $2a-1\geq a$ when $a\geq 1$, we have \begin{equation}
    	\bigg(\fint_{B_x(\theta R)}u^{a\frac{2n}{n-2}}\bigg)^{\frac{n-2}{2an}}\leq\bigg[2aC_S\tau^2\bigg(\frac{64\varepsilon}{9}+\frac{28}{(\tau-\theta)^2}\bigg)\bigg(\frac{\Vol(B_x(\tau R))}{\Vol(B_x(\theta R))}\bigg)^{\frac{n-2}{n}}\fint_{B_x(\tau R)}u^{2a}\bigg]^{\frac{1}{2a}}
    \end{equation}
    
    Now we use an iteration argument. Pick $$a_0=\frac{k}{2},\quad a_i=a_0\big(\frac{n}{n-2}\big)^i,\quad\tau_0=\tau,\quad\tau_i=\tau-\sum\limits_{j=1}^i2^{-j}(\tau-\theta).$$ Then we obtain \begin{equation}
        \begin{aligned}
            \bigg(\fint_{B_x(\tau_{i+1}R)}u^{2a_{i+1}}\bigg)^{\frac{1}{2a_{i+1}}}\leq\bigg[2a_iC_S\tau_i^2\bigg(\frac{64\varepsilon}{9}+\frac{28\cdot 4^{i+1}}{a_i(\tau-\theta)^2}\bigg)\bigg(\frac{\Vol(B_x(\tau_i R))}{\Vol(B_x(\tau_{i+1} R))}\bigg)^{\frac{n-2}{n}}\fint_{B_x(\tau_i R)}u^{2a_i}\bigg]^{\frac{1}{2a_i}}
        \end{aligned}
    \end{equation}
    By induction, we have \begin{equation}\label{3.10}
        \begin{aligned}
            \bigg(\fint_{B_x(\tau_{i+1}R)}u^{2a_{i+1}}\bigg)^\frac{1}{2a_{i+1}}&\leq\bigg(\fint_{B_x(\tau R)}u^k\bigg)^\frac{1}{k}\prod\limits_{j=0}^i\bigg[2a_iC_S\tau_i^2\bigg(\frac{64\varepsilon}{9}+\frac{28\cdot 4^{i+1}}{a_i(\tau-\theta)^2}\bigg)\bigg]^{\frac{1}{2a_i}}\\
            &\times\prod\limits_{j=0}^i\bigg(\frac{\Vol(B_x(\tau_i R))}{\Vol(B_x(\tau_{i+1} R))}\bigg)^{\frac{1}{2a_{i+1}}}.
        \end{aligned}
    \end{equation}
    Notice that \begin{equation}
        \begin{aligned}
            &\sum\limits_{j=0}^\infty\frac{1}{a_i}=\frac{n}{k}<\infty;\\
            &\prod\limits_{i=0}^\infty (2a_i)^{\frac{1}{2a_i}}=\exp\bigg(\sum\limits_{i=0}^\infty \frac{\log(2a_i)}{2a_i}\bigg)\leq C(n,k);\\
            &\prod\limits_{i=0}^\infty (\tau_i)^{\frac{1}{a_i}}=\exp\bigg(\sum\limits_{i=0}^\infty \frac{\log(\tau_i)}{a_i}\bigg)\leq \exp\bigg(\sum\limits_{i=0}^\infty \frac{\log(\tau)}{a_i}\bigg)\leq C(n,k);\\
            &\prod\limits_{i=0}^\infty\bigg(\frac{64\varepsilon}{9}+\frac{28\cdot4^{i+1}}{a_i(\tau-\theta)^2}\bigg)^{\frac{1}{2a_i}}=\frac{1}{(\tau-\theta)^{\frac{n}{k}}}\exp\bigg(\sum\limits_{i=0}^\infty\frac{1}{2a_i}\log\big(\frac{64\varepsilon(\tau-\theta)^2}{9}+\frac{28\cdot4^{i+1}}{a_i}\big)\bigg)\leq \frac{C(\varepsilon,n,k)}{(\tau-\theta)^{\frac{n}{k}}}.
        \end{aligned}
    \end{equation}
    Besides, notice that for $x\in B_p(4R)\backslash B_p(R)$ and $y\in B_x(\tau_i R)$, we have $$r(y)\geq d(x,p)-d(x,y)\geq (1-\tau_i)R\geq\frac{3R}{4},$$ which implies $\Ric(y)\geq-\frac{16\varepsilon}{9R^2}g$. 
    Thus by Bishop-Gromov volume comparison theorem, we obtain \begin{equation}
    	\begin{aligned}
    		\prod\limits_{i=0}^\infty&\bigg(\frac{\Vol(B_x(\tau_i R))}{\Vol(B_x(\tau_{i+1}R))}\bigg)^{\frac{1}{2a_{i+1}}}=\exp\bigg[\sum\limits_{i=0}^\infty\frac{1}{2a_{i+1}}\log\bigg(\frac{\Vol(B_x(\tau_i R))}{\Vol(B_x(\tau_{i+1}R))}\bigg)\bigg]\\
    		&=\exp\bigg\{\sum\limits_{i=0}^\infty\frac{1}{k}\bigg(\frac{n-2}{n}\bigg)^{i+1}\bigg[\log\bigg(C(n)\big(\frac{\tau_i}{\tau_{i+1}}\big)^n\bigg)+(n-1)\sqrt{\frac{16\varepsilon}{9R^2}}(\tau_i-\tau_{i+1})R\bigg]\bigg\}\\
    		&\leq \exp\bigg\{\sum\limits_{i=0}^\infty\frac{1}{k}\bigg(\frac{n-2}{n}\bigg)^{i+1}\bigg[\log\bigg(C(n)\big(\frac{\tau}{\theta}\big)^n\bigg)+(n-1)\sqrt{\frac{16\varepsilon}{9R^2}}(\tau_i-\tau_{i+1})R\bigg]\bigg\}\\
    		&\leq C(n,\varepsilon,\tau,\theta).
    	\end{aligned}
    \end{equation}
    Thus, by taking $j\rightarrow\infty$ in (\ref{3.10}), we obtain \begin{equation}\label{3.12}
        \sup\limits_{B_x(\theta R)}|u|\leq \frac{C(k,C_S,n,\theta,\tau)}{(\tau-\theta)^{\frac{n}{k}}}\bigg(\fint_{B_x(\tau R)}u^k\bigg)^{\frac{1}{k}}.
    \end{equation}

    Finally we will prove the case when $k<2$. Take $k=2$ in (\ref{3.12}) and apply Young's inequality, we have \begin{equation}
        \begin{aligned}
            \sup\limits_{B_x(\theta R)}|u| &\leq \frac{C}{(\tau-\theta)^n}\bigg(\fint_{B_x(\tau R)}u^2\bigg)^{\frac{1}{2}}\\
            &\leq \frac{C}{(\tau-\theta)^n}\bigg(\fint_{B_x(\tau R)}u^k\bigg)^{\frac{1}{2}}(\sup\limits_{B_x(\tau R)}|u|)^{1-\frac{k}{2}}\\
            &\leq \frac{1}{2}\sup\limits_{B_x(\tau R)}|u|+\frac{C}{(\tau-\theta)^{\frac{n}{k}}}\bigg(\fint_{B_x(\tau R)}u^k\bigg)^{\frac{1}{k}}.
        \end{aligned}
    \end{equation}
    The result now follows by applying \cite[Lemma 4.3]{han2011elliptic}.
\end{proof}

\section{Counting dimension}\label{sec4}
In this section, we will prove Theorem \ref{thm1} and Theorem \ref{dim_Sch}. 
For convenience, we take the case of differential forms to state our idea. 
Our idea is to give a uniform upper bound of the dimension of the subspace $K\subseteq \mathcal{H}_l^1(\M)$, where $\mathcal{H}_l^1(\M)$ denotes the linear space of harmonic $1$-forms of polynomial growth of order $l$ on $\M$. 
On the other hand, the Bochner-Weitzenb\"ock formula tells us that the curvature term plays a role as a potential term when considering $\Delta|\omega|^2$. 
Thus, for the case of Schr\"odinger equation, our argument is quite similar, just by replacing the norm of differential forms by $|u|$, and replacing the curvature term by the potential term in Schr\"odinger equation. 
\begin{lemma}
    Given $R>R_0>0$ and a subspace $K\subseteq \mathcal{H}_l^1(\M)$, the bilinear form $$A_{R,R_0}(\omega,\nu):=\int_{B_p(R)\backslash B_p(R_0)}\langle \omega,\nu\rangle dx$$ is an inner product on $K$.
\end{lemma}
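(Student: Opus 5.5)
Bilinearity, symmetry and $A_{R,R_0}(\omega,\omega)\ge 0$ are immediate from the definition, since the integrand $\langle\omega,\nu\rangle$ is bilinear and symmetric pointwise. So the whole content of the lemma is definiteness: if $\omega\in K$ and $\int_{B_p(R)\setminus B_p(R_0)}|\omega|^2dx=0$, then $\omega\equiv 0$ on $\M$. I will reduce this to unique continuation for harmonic forms.

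First, continuity of $\omega$ together with vanishing of the integral gives $\omega\equiv0$ on the open annulus $A(p,R_0,R)$. This set is nonempty: $\M$ is complete and noncompact, so there are points at every distance from $p$, and $A(p,R_0,R)$ contains an open ball. Next, $\omega$ is harmonic, $\Delta_d\omega=0$. In local coordinates the Hodge Laplacian on $1$-forms is a second-order elliptic system with diagonal principal part $g^{ij}\partial_i\partial_j$, which is of Laplace type, and smooth lower-order coefficients. For such systems, Aronszajn's unique continuation theorem applies: a solution vanishing on a nonempty open set of a connected manifold vanishes identically. One can also use the Aronszajn--Cordes form, which needs only $|\Delta\omega|\le C(|\nabla\omega|+|\omega|)$ componentwise in coordinates. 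Since $\M$ is connected, as a complete Riemannian manifold it is understood to be, this gives $\omega\equiv0$.

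The only step needing care is invoking unique continuation for a system rather than a scalar equation. Aronszajn's theorem covers exactly the case of a diagonal Laplace-type principal symbol with coupling only in lower-order terms, so this is the result I would cite. I would also note that the same argument handles the Schr\"odinger case used later for Theorem \ref{dim_Sch}, where $\Delta u+Vu=0$ is scalar. In that case the bilinear form is $\int_{B_p(R)\setminus B_p(R_0)}uv\,dx$, and the classical scalar unique continuation result suffices.
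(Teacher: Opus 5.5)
Your proof is correct and takes the paper's route: the paper also reduces definiteness of $A_{R,R_0}$ to unique continuation for harmonic $1$-forms. The only difference is the citation. The paper invokes Wolff's Theorem 3 together with the remarks after it, whereas you use Aronszajn's theorem for systems with diagonal Laplace-type principal part, which suffices here because the coefficients are smooth; you also make explicit the vanishing on the nonempty open annulus and the use of connectedness, which the paper leaves implicit.
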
 
\begin{proof}
    What we need to show is that $A_{R,R_0}(\omega,\omega)=0$ implies $\omega\equiv0$ on $\M$. 
    This is exactly the unique continuation property. 
    This follows from the unique continuation result in \cite[Theorem 3]{wolff1992property}, together with the discussion in the two paragraphs immediately following the theorem.\end{proof}

\begin{lemma}\label{lower_bound_esti_form}
    Let $\M$ be a complete manifold with volume condition $\Vol(B_p(R))\leq CR^{d_V}$ for some $p\in\M$ and $R$ large. 
    Let $K$ be any $k$-dimensional subspace of $\mathcal{H}_l^1(\M)$. 
    For any $\beta>1,\delta_1>0, R_0>0$, there exists $R>R_0$, which may depend on $k$ and $R_0$, 
    such that if $\{\omega_i\}_{i=1}^k$ is an orthonormal basis of $K$ with respect to the inner product $A_{\beta R,R_0}$, 
    then we have \begin{equation}
        \sum\limits_{i=1}^k\int_{B_p(R)\backslash B_p(R_0)}|\omega_i|^2dx\geq k\beta^{-(2l+d_V+\delta_1)}.
    \end{equation}
\end{lemma}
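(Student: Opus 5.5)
This is the standard trace/determinant argument of Li and Colding--Minicozzi, applied to the annular regions $B_p(R)\setminus B_p(R_0)$. Fix any basis of $K$. For $R>R_0$, let $G(R)$ be the $k\times k$ Gram matrix of that basis with respect to $A_{R,R_0}$, and set
$$f(R):=\det G(R).$$
By the preceding lemma, $A_{R,R_0}$ is an inner product on $K$, so $f(R)>0$ for every $R>R_0$. The quantity to bound has a clean invariant form. If $\{\omega_i\}$ is $A_{\beta R,R_0}$-orthonormal, then
$$\sum_{i=1}^k\int_{B_p(R)\setminus B_p(R_0)}|\omega_i|^2\,dx=\operatorname{tr}_{A_{\beta R,R_0}}A_{R,R_0}.$$
This is the trace of the positive operator representing $A_{R,R_0}$ relative to $A_{\beta R,R_0}$, so it does not depend on which orthonormal basis is chosen. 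By the AM--GM inequality applied to the eigenvalues of that operator,
$$\operatorname{tr}_{A_{\beta R,R_0}}A_{R,R_0}\ \ge\ k\left(\frac{\det G(R)}{\det G(\beta R)}\right)^{1/k}=k\left(\frac{f(R)}{f(\beta R)}\right)^{1/k}.$$
It therefore suffices to find some $R>R_0$ with
$$f(\beta R)\le \beta^{k(2l+d_V+\delta_1)}f(R).$$

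The next step is a polynomial upper bound on $f$. Each basis element satisfies $|\omega|\le C_\omega(1+r)^l$, and $\Vol(B_p(R))\le CR^{d_V}$ for $R$ large. Hence every entry of $G(R)$ is $O(R^{2l+d_V})$. Since $G(R)$ is $k\times k$, its determinant satisfies
$$f(R)\le C'R^{k(2l+d_V)}\qquad\text{for large }R,$$
where $C'$ depends on $K$ and the chosen basis. Monotonicity of $f$ in $R$ holds because $G(R)$ increases in the positive semidefinite order, though we only need positivity.

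Now argue by contradiction. Suppose that for every $R>R_0$ we had $f(\beta R)>\beta^{k(2l+d_V+\delta_1)}f(R)$. Start from some $R_1>R_0$ and iterate $m$ times:
$$f(\beta^mR_1)>\beta^{mk(2l+d_V+\delta_1)}f(R_1).$$
Here $f(R_1)>0$, so the right-hand side is a positive multiple of $(\beta^mR_1)^{k(2l+d_V)+k\delta_1}$. This grows strictly faster than the upper bound $C'(\beta^mR_1)^{k(2l+d_V)}$, so the two estimates contradict each other for $m$ large. Consequently some $R$ of the form $\beta^mR_1>R_0$ satisfies $f(\beta R)\le\beta^{k(2l+d_V+\delta_1)}f(R)$, and by the first paragraph this gives the claimed inequality.

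The point requiring most care is the dependence of $R$ on the data. The $R$ found above depends on $K$ through the constants $C_\omega$ and $f(R_1)$, and also on $R_0$; this is allowed by the statement, which only asks that $R$ exist. The rest of the argument is to keep the trace identity basis-independent, so that the bound holds for every $A_{\beta R,R_0}$-orthonormal basis, and to track exponents carefully so that the factor is exactly $\beta^{-(2l+d_V+\delta_1)}$ per basis element.
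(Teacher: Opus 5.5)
Your proposal is correct and is essentially the paper's argument. Both bound the Gram determinant polynomially along a geometric sequence of radii, argue by contradiction to find one step where it grows by at most $\beta^{k(2l+d_V+\delta_1)}$, and apply AM--GM to the eigenvalues of the relative Gram operator. Your basis-independent trace formulation and your starting radius $R_1>R_0$ (instead of radii $\beta^J$) only tidy up details the paper glosses over.
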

\begin{proof}
    For a positive integer $J$, we denote $D_J:=B_p(\beta^J)\backslash B_p(R_0)$ and the $k\times k$ matrix $$M_J:=\bigg(\int_{D_J}\langle \omega_i,\omega_j\rangle dx\bigg).$$
     By the growth condition on $\omega_i$ and volume of geodesic balls, we have \begin{equation}\label{4.2}
         \det M_J\leq C\beta^{kJ(2l+d_V)}k!.
     \end{equation}
     We claim that for any positive integer $k$, there exists an integer $J=J(k,\beta,\delta_1,l,d_V)$ such that \begin{equation}\label{4.3}
         \det M_{J+1}\leq \beta^{k(2l+d_V+\delta_1)}\det M_J.
     \end{equation}
     If the claim is not true, then there exists an integer $k$ such that for all $J$, we have $$\det M_{J+1}>\beta^{k(2l+d_V+\delta_1)}\det M_J.$$
     Thus by induction on $J$, we obtain that for all $J$, $$\det M_J>\beta^{k(J-1)(2l+d_V+\delta_1)}\det M_1,$$ 
     which will lead to a contradiction with (\ref{4.2}) when $J$ is large. 
     Thus our claim holds. 
     By normalization, we assume that $\{\omega_i\}_{i=1}^k$ is an orthonormal basis under the inner product $A_{\beta^{J+1},R_0}$ so that $\det M_{J+1}=1$. 
     Applying the arithmetic-geometric inequality in (\ref{4.3}), we obtain $$k\leq \beta^{2l+d_V+\delta_1}\text{Tr}M_J,$$ where $\text{Tr} M_J$ is the trace of $M_J$. 
     And our result follows from taking $R=\beta^J$.
\end{proof}

Before we prove our main Theorem \ref{thm1}, we need the following lemma, which implies the volume of annuli $B_p(2R)\backslash B_p(R)$ can be bounded by volume of geodesic balls far away from origin. 
\begin{lemma}\label{vol_comparison_far}
	Suppose $\M$ is a complete manifold satisfying (a) and (c) in Theorem \ref{thm1}. Then there exists a constant $C_0$ depending only on $n, C_S$ and $\varepsilon$ such that for all $x\in B_p(2R)\backslash B_p(R)$, we have\begin{equation}
		\Vol(B_p(2R)\backslash B_p(R))\leq C_0\sigma^{-n} \Vol(B_x(\sigma R)),
	\end{equation} where $0<\sigma\leq\frac{1}{2}$ is a fixed constant. 
\end{lemma}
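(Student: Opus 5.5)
The plan is to combine a covering argument with the Bishop--Gromov comparison on balls that stay away from $p$, where assumption (c) gives $\Ric\geq -\frac{C\varepsilon}{R^2}g$, a curvature lower bound at exactly the scale $R^{-2}$. On such balls the comparison constant in Bishop--Gromov is uniform in $R$.

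\textbf{Step 1: comparable ball volumes at nearby centres.} For $y\in A(p,R,2R)$ and radii up to $\sigma R/2$ with $\sigma\le\frac12$, a ball $B_y(s)$ may reach down to distance $\frac{3R}{4}$ from $p$, and on it $\Ric\geq-\frac{16\varepsilon}{9R^2}g$. We want $s\lesssim R$ and a centre far from $p$ on every ball we compare. Bishop--Gromov in the form of a relative volume comparison for balls sharing a centre still works when the lower Ricci bound holds only on the larger ball, because minimizing geodesics from the centre stay inside it. It gives, for $0<s<t\le cR$,
\[
\frac{\Vol(B_y(t))}{\Vol(B_y(s))}\le \frac{V_{-\kappa}(t)}{V_{-\kappa}(s)}\le C(n,\varepsilon)\Big(\frac ts\Big)^n,\qquad \kappa=\frac{16\varepsilon}{9R^2},
\]
where the constant is uniform because $\sqrt\kappa\, t\le C\sqrt\varepsilon$. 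This is the same computation as at the end of the proof of Theorem~\ref{mvi}.

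\textbf{Step 2: a covering.} Take a maximal $\frac{\sigma R}{4}$-separated set $\{y_j\}$ in $A(p,R,2R)$. The balls $B_{y_j}(\sigma R/4)$ cover the annulus, and the balls $B_{y_j}(\sigma R/8)$ are disjoint. Then
\[
\Vol(A(p,R,2R))\le \sum_j \Vol(B_{y_j}(\sigma R/4)).
\]
Two things have to be controlled: the number $N$ of points, and the volume of each $B_{y_j}(\sigma R/4)$ against $\Vol(B_x(\sigma R))$.

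For the second, use $B_{y_j}(\sigma R/4)\subset B_x(d(x,y_j)+\sigma R)$. Since $d(x,y_j)\le 4R$, it is enough to compare $\Vol(B_x(5R))$ with $\Vol(B_x(\sigma R))$. Here the obstacle appears: $B_x(5R)$ contains $p$, where the curvature bound is only $-\varepsilon$, not $-\varepsilon/R^2$. Instead we use the doubling property about $p$ from Lemma~\ref{volume_growth}, which holds at every scale with constant $2^{n+\varepsilon}$:
\[
\Vol(B_x(5R))\le \Vol(B_p(7R))\le C\Vol(B_p(R/2)).
\]
Then $B_p(R/2)$ must be compared with a ball around $x$. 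The natural idea is to use the Sobolev inequality (a), which gives non-collapsing $\Vol(B_y(s))\ge c(n,C_S)\,s^n\,\Vol(B_y(t))/t^n$ for $s\le t$. The standard argument applies (a) to test functions on $B_y(t)$ supported in $B_y(s)$ and iterates. This is presumably why $C_0$ depends on $C_S$.

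Concretely, (a) yields the relative non-collapsing
\[
\frac{\Vol(B_y(s))}{\Vol(B_y(t))}\ge c(n,C_S)\Big(\frac st\Big)^n\quad\text{for all } y,\ 0<s\le t.
\]
With $y=x$, $t=5R$ and $s=\sigma R$ this gives $\Vol(B_x(5R))\le C\sigma^{-n}\Vol(B_x(\sigma R))$ directly.

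Since the balls $B_{y_j}(\sigma R/4)$ cover the annulus and each lies in $B_x(5R)$, it is cleanest to skip the covering altogether: $A(p,R,2R)\subset B_x(4R)\subset B_x(5R)$, and the non-collapsing above finishes the proof with $C_0=C(n,C_S)$. Step 2 is then unnecessary, and Step 1 only supplies the alternative route through curvature.

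\textbf{Main obstacle.} The hard step is deriving the relative non-collapsing from (a). I would test (a) on $B_y(t)$ with the Lipschitz function $u=(s-d(y,\cdot))_+$, with $s\le t$. The left side is at least
\[
\Big(\tfrac{s}{2}\Big)^2\Big(\frac{\Vol(B_y(s/2))}{\Vol(B_y(t))}\Big)^{\frac{n-2}{n}},
\]
and the right side is at most $C_St^2\,\Vol(B_y(s))/\Vol(B_y(t))$. This gives the recursive inequality
\[
\frac{V(s)}{V(t)}\ge c\,\frac{s^2}{t^2}\Big(\frac{V(s/2)}{V(t)}\Big)^{\frac{n-2}{n}}.
\]
Iterating it along $s_k=2^{-k}s$, the standard Carron/Saloff-Coste argument, yields $V(s)/V(t)\ge c(s/t)^n$. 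Convergence of the iteration is guaranteed by the Riemannian asymptotics of small balls.
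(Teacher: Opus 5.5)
Your proof is correct, but it takes a different route from the paper's.

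The paper applies (a) exactly once, on the large ball $B_p(3R)\supset B_x(\sigma R)$. It uses a cut-off equal to $1$ on $B_x(\sigma R/2)$ and supported in $B_x(\sigma R)$. This yields $\Vol(B_p(3R))\le(144C_S\sigma^{-2})^{n/2}\Vol(B_x(\sigma R))\bigl(\Vol(B_x(\sigma R))/\Vol(B_x(\sigma R/2))\bigr)^{(n-2)/2}$. The leftover local doubling ratio at $x$ is then bounded by Bishop--Gromov: since $r\ge(1-\sigma)R$ on $B_x(\sigma R)$, hypothesis (c) gives $\Ric\ge-4\varepsilon R^{-2}$ there, which is scale-invariant at radius $\sigma R$.

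You never estimate the ratio $V(s/2)/V(t)$ by curvature. Instead you feed it back into the same one-shot Sobolev estimate along $s_k=2^{-k}s$. The exponents $\alpha^k$, with $\alpha=\frac{n-2}{n}$, satisfy $\sum_k\alpha^k=\frac n2$ and $\sum_k k\alpha^k<\infty$, so the product of the constants is of the form $c(n,C_S)(s/t)^n$. The tail $f(s_K)^{\alpha^K}\to1$ uses the small-ball asymptotics only qualitatively, so no uniformity is lost.

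Your version gives more. Hypothesis (c) is not needed, and $C_0$ depends only on $n$ and $C_S$. You also get volume doubling at every centre and scale from (a) alone, which could replace the Bishop--Gromov step in the proof of Theorem~\ref{mvi}. The paper's version is a finite computation with an explicit constant, at the cost of using the curvature assumption.

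When you write this up, drop Steps 1 and 2 and the detour through doubling at $p$. The inclusion $B_p(2R)\setminus B_p(R)\subset B_x(4R)$, together with non-collapsing for $t=4R$ and $s=\sigma R$, is the entire proof.
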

\begin{proof}
	For $x\in B_p(2R)\backslash B_p(R)$, we pick a smooth cut-off function $\eta$ such that \begin{equation}
		0\leq\eta\leq1,\quad \eta=\Bigg\{\begin{aligned}
			&1,\quad B_x(\frac{\sigma R}{2}),\\
			&0,\quad \M\backslash B_x(\sigma R),
		\end{aligned}\quad |\nabla\eta|\leq\frac{4}{\sigma R}.
	\end{equation}
	Thus by our assumption (a), we have \begin{equation}
		\bigg(\fint_{B_p(3R)}|\eta|^{\frac{2n}{n-2}}dx\bigg)^{\frac{n-2}{n}}\leq C_S(3R)^2\fint_{B_p(3R)}|\nabla\eta|^2dx.
	\end{equation}
	By our choice of $\eta$, we obtain that \begin{equation}
		\bigg(\fint_{B_p(3R)}|\eta|^{\frac{2n}{n-2}}dx\bigg)^{\frac{n-2}{n}}\geq \bigg(\frac{\Vol(B_x(\frac{\sigma R}{2}))}{\Vol(B_p(3R))}\bigg)^{\frac{n-2}{n}}.
	\end{equation}
	and \begin{equation}
		C_S(3R)^2\fint_{B_p(3R)}|\nabla\eta|^2dx\leq 9C_SR^2\frac{\Vol(B_x(\sigma R))}{\Vol(B_p(3R))}\bigg(\frac{4}{\sigma R}\bigg)^2.
	\end{equation}
	Thus we obtain that \begin{equation}\label{eq4.9}
		\Vol(B_p(3R))\leq(\frac{144C_S}{\sigma^2})^{\frac{n}{2}}\Vol(B_x(\sigma R))\bigg(\frac{\Vol(B_x(\sigma R))}{\Vol(B_x(\frac{\sigma R}{2}))}\bigg)^{\frac{n-2}{2}}.
	\end{equation}
	Notice that for $y\in B_x(\sigma R)$, by triangle inequality, we obtain \begin{equation}
		r(y)\geq d(p,x)-d(x,y)\geq (1-\sigma)R,
	\end{equation}
	which implies $$\Ric(y)\geq -\frac{\varepsilon}{1+r^2(y)}\geq-\frac{\varepsilon}{(1-\sigma)^2R^2}\geq-\frac{4\varepsilon}{R^2}.$$
	Thus by Bishop-Gromov volume comparison theorem, we have \begin{equation}\label{eq4.11}
		\frac{\Vol(B_x(\sigma R))}{\Vol(B_x(\frac{\sigma R}{2}))}\leq \frac{\bar{\Vol}(\bar{B}(\sigma R))}{\bar{\Vol}(\bar{B}(\frac{\sigma R}{2}))}=c(n,\varepsilon).
	\end{equation}
	Since $$B_p(2R)\backslash B_p(R)\subseteq B_p(3R),$$ therefore, this lemma follows by combining (\ref{eq4.9}) and (\ref{eq4.11}).
\end{proof}

Now we begin to prove Theorem \ref{thm1}.
\begin{proof}
    Let $K$ be a $k$-dimensional subspace of $\mathcal{H}_l^1(\M)$. 
    If $k\leq n$, then there is nothing to prove. So we assume that $k>n$.
    Let $0<\lambda<\frac{1}{2}$ be a fixed constant to be determined. 
    Since $\M$ has polynomial volume growth of order $n+\varepsilon$, thus by taking $\beta=1+2\lambda$ in Lemma \ref{lower_bound_esti_form}, we have \begin{equation}
        k\leq (1+2\lambda)^{2l+n+\varepsilon+\delta_1}\sum\limits_{i=1}^k\int_{B_p(R)\backslash B_p(R_0)}|\omega_i|^2dx,
    \end{equation} where $\{\omega_i\}_{i=1}^k$ is an orthonormal basis with respect to the inner product $A_{(1+2\lambda)R,R_0}$. By Lemma \ref{lem_appendix}, we have 
    \begin{equation}
        k\leq C(1+2\lambda)^{2l+n+\varepsilon+\delta_1}\frac{(1+\frac{1}{2}\lambda)^2}{\lambda^2}\sum\limits_{i=1}^k\int_{B_p((1+\lambda)R)\backslash B_p(R)}|\omega_i|^2dx.
    \end{equation}
    For $x\in B_p((1+\lambda)R)\backslash B_p(R)$, we let $K_x:=\{\omega\in K\big|\omega(x)=0\}$, then by \cite[Lemma 11]{li1980sobolev}, $K_x$ has codimension at most $n$ (for $q$-forms, $K_x$ has codimension at most $(^n_q)$). Then there exists an orthogonal transformation mapping $\{\omega_i\}_{i=1}^k$ into $\{\nu_i\}_{i=1}^k$, where $\{\nu_i\}_{i=n+1}^k\subseteq K_x$. Thus by our local mean value inequality, we have \begin{equation}
        \sum\limits_{i=1}^k|\omega_i|^2(x)=\sum\limits_{i=1}^n|\nu_i|^2(x)\leq C\sum\limits_{i=1}^k\fint_{B_x((1+2\lambda)R-d(x,p))}|\nu_i|^2dx.
    \end{equation}
    Observe that when $R$ is large and $\lambda$ is small, we always have $(1-2\lambda)R>R_0.$ Thus by our assumption on $\{\omega_i\}_{i=1}^k,$ we obtain that \begin{equation}
        \begin{aligned}
            \fint_{B_x((1+2\lambda)R-d(x,p))}|\nu_i|^2(y)dy&\leq \frac{C}{\Vol(B_x((1+2\lambda)R-d(x,p))}\sup\limits_{\omega\in \langle A,U\rangle}\int_{B_x((1+2\lambda)R-d(x,p))}|\omega|^2dy\\
            &\leq  \frac{C}{\Vol(B_x((1+2\lambda)R-d(x,p))},
        \end{aligned}
    \end{equation}
    where $\langle A,U\rangle=\{\sum\limits_{i=1}^ka_i\omega_i|\sum\limits_{i=1}^ka_i^2=1\}.$

    Integration on $B_p((1+\lambda)R)\backslash B_p(R)$, we obtain that \begin{equation}\label{eq5.8}
        k\leq (1+2\lambda)^{2l+n+\varepsilon+\delta_1}Cn\frac{(1+\frac{1}{2}\lambda)^2}{\lambda^2}\int_{B_p((1+\lambda)R)\backslash B_p(R)}\frac{dx}{\Vol(B_x((1+2\lambda)R-d(x,p)))}.
    \end{equation}
    Notice that $(1+2\lambda)R-d(x,p)\geq \lambda R$, thus, we have \begin{equation}
    	\int_{B_p((1+\lambda)R)\backslash B_p(R)}\frac{dx}{\Vol(B_x((1+2\lambda)R-d(x,p)))}\leq \int_{B_p((1+\lambda)R)\backslash B_p(R)}\frac{dx}{\Vol(B_x(\lambda R))}.
    \end{equation}
    Pick $\lambda=\sigma=\frac{1}{16}$, and apply our Lemma \ref{vol_comparison_far}, we have \begin{equation}
    	\int_{B_p((1+\lambda)R)\backslash B_p(R)}\frac{dx}{\Vol(B_x((1+2\lambda)R-d(x,p)))}\leq\frac{\Vol(B_p(2R)\backslash B_p(R))}{\inf\limits_{x\in B_p(2R)\backslash B_p(R)}\Vol(B_x(\sigma R))}\leq C_1,
    \end{equation}
    which implies $k\leq C$ for some constant $C$ independent of $K$. Thus our result follows.
\end{proof}

In the Bochner formula, the curvature term plays the role of a potential. 
Therefore, the same argument proves Theorem \ref{dim_Sch}, and we omit the details.
\section{Examples}\label{sec5}
\subsection{Example for our assumption}\label{subsec5.1}
In this subsection, we will give an example to show that there indeed exist some manifolds satisfying our assumptions in Theorem \ref{thm1}. 
Consider the warped product $$(\M,g)=(\mathbb{R}^n,g_\delta=dr^2+r^2(1+\delta\chi(r)))^2g_{\mathbb{S}^{n-1}}),$$ 
where $\chi$ is a fixed smooth function satisfying $$\chi=\bigg\{\begin{aligned}
&0,\quad r\leq 1\\
&1,\quad r\geq 2
\end{aligned}$$ and $\delta$ is a small constant. Then $(\mathbb{R}^n,g_\delta)$ is a complete, nonflat manifold with one end.
Moreover, it's easy to verify that $g_\delta$ is uniformly equivalent to the standard Euclidean metric on $\mathbb{R}^n$. 
Thus by Chen-Sung \cite{chen2006dimension}, the space of harmonic 1-forms with polynomial growth of some finite order has finite dimension, and dimension estimate is also established there.
On the other hand, by the uniform equivalence of the two metric, we know that the $L^2$-Sobolev inequality also holds on our manifold.

Now we will show that the following weak Hardy inequality holds for all $u\in C^\infty_0(\M)$:\begin{equation}
    \int_\M\frac{u^2}{1+r^2(x)}dx\leq\frac{4(1+\delta)^{n+1}}{(n-2)^2}\int_\M |\nabla u|^2dx.
\end{equation}
To see this, we denote $Du$ the gradient of $u$ on standard Euclidean space and $d\mu$ the volume form on Euclidean space.
Then by our construction, it's easy to see that \begin{equation}
    dx=(1+\delta\chi(r))^{n-1}d\mu,\quad |\nabla u|^2_{g_\delta}<|Du|^2_{Euc}<(1+\delta\chi(r))^2|\nabla u|^2_{g_\delta}.
\end{equation}
Thus we obtain \begin{equation}
    \begin{aligned}
        \int_\M\frac{u^2}{1+r^2(x)}dx&=\int_{\mathbb{R}^n}\frac{u^2}{1+r^2_g(x)}dx\\
        &\leq (1+\delta)^{n-1}\int_{\mathbb{R}^n}\frac{u^2}{1+r^2_g(x)}d\mu\\
        &\leq (1+\delta)^{n-1}\int_{\mathbb{R}^n}\frac{u^2}{1+|x|^2}d\mu\\
        &\leq \frac{4(1+\delta)^{n-1}}{(n-2)^2}\int_{\mathbb{R}^n}|Du|^2_{Euc}d\mu\\
        &\leq \frac{4(1+\delta)^{n+1}}{(n-2)^2}\int_{\mathbb{R}^n}|\nabla u|^2_gdx.
    \end{aligned}
\end{equation}

Finally we study the Ricci curvature of this manifold. By direct computation, we have \begin{equation}
    \begin{aligned}
        \Ric(\partial_r)&=-(n-1)\frac{2\delta\chi'(r)+r\delta\chi''(r)}{r(1+\delta\chi(r))}\partial_r\\
        \Ric(X)&=\bigg[(n-2)\frac{1-(1+\delta\chi(r)+r\delta\chi'(r))^2}{r^2(1+\delta\chi(r))^2}-\frac{2\delta\chi'(r)+r\delta\chi''(r)}{r(1+\delta\chi(r))}\bigg]X
    \end{aligned}
\end{equation}
When $0<r<1$, Ricci curvature vanishes. When $r\geq 2$, we see that $\chi(r)=1,\chi'(r)=\chi''(r)=0$. Thus in this case, we have \begin{equation}
    \Ric(\partial_r)=0, \Ric(X)=-(n-2)\frac{\delta(\delta+2)}{r^2(1+\delta)^2}X.
\end{equation}
Thus Ricci curvature has quadratic decay. When $1<r<2$, we denote $A_1=||\chi'||_{L^\infty},A_2=||\chi''||_{L^\infty}$.
Then we have \begin{equation}
    \begin{aligned}
        \Ric(\partial_r)&\geq-\frac{(n-1)(2\delta A_1+r\delta A_2)}{r}\geq-\frac{\delta C_1(n,\chi)}{1+r^2}\\
        \Ric(X)&\geq-(n-2)\frac{(2+2\delta+r\delta A_1)(2\delta+r\delta A_1)}{r^2}-\frac{2\delta A_1+r\delta A_2}{r}\geq-\frac{\delta C_2(n,\chi)}{1+r^2}.
    \end{aligned}
\end{equation}
Thus we obtain that there exists a constant $C$ depending only on $n$ and $\chi$ such that \begin{equation}
    \Ric\geq-\frac{C\delta}{1+r^2(x)}.
\end{equation}
When $\delta$ is sufficiently small, we see that $C\delta \frac{4(1+\delta)^{n+1}}{(n-2)^2}<1$ must hold, which implies there indeed exists a complete manifold satisfying our assumptions in Theorem \ref{thm1}.
\subsection{Counterexample for small curvature}\label{subsec5.2}
In this subsection, we present a counterexample to show that the small curvature assumption is necessary. 
Roughly speaking, our counterexample is constructed by gluing countably many handles smoothly on the Euclidean space, see the following figure \ref{Figure 1}.

\begin{figure}[H]
    \begin{tikzpicture}
        \draw (0,0)--(14.5,0);
        \draw (1.5,1)--(16,1);
        \draw (0,0)--(1.5,1);
        \draw (14.5,0)--(16,1);
        \draw[dashed] (2.5,0.5) ellipse (0.5 and 0.125);
        \draw[dashed] (4.5,0.5) ellipse (0.5 and 0.125);
        \draw[use Hobby shortcut] (2,0.5)..(3.5,1.8)..(5,0.5);
        \draw[use Hobby shortcut] (3,0.5)..(3.5,0.8)..(4,0.5);
        \draw[dashed] (8.5,0.5) ellipse (0.75 and 0.1625);
        \draw[dashed] (12.5,0.5) ellipse (0.75 and 0.1625);
        \draw[use Hobby shortcut] (7.75,0.5)..(10.5,2.7)..(13.25,0.5);
        \draw[use Hobby shortcut] (9.25,0.5)..(10.5,1.2)..(11.75,0.5);
        \node at (0,-0.35) {$\mathbb{R}^n$};
    \end{tikzpicture}
    \caption{Counterexample for small curvature}
    \label{Figure 1}
\end{figure}

Precisely speaking, consider the $k$-th pair balls in Euclidean space $B_{\mathbb{R}^n}(a_ke_1,\frac{1}{2}a_k)$ and $B_{\mathbb{R}^n}(4a_ke_1,\frac{1}{2}a_k)$, where $a_k=4^{2k-1},e_1=(1,0,...,0)\in\mathbb{R}^n$.
We cut out these balls and smoothly attach a handle $H_k$ from the first ball of $k$-th pair to the second one, where these handles are self-similar to each other.
Then let $$Q_k:=\bigg(\{x\in\mathbb{R}^n|\frac{a_k}{3}\leq |x|\leq \frac{16a_k}{3}\}\backslash\big(B_{\mathbb{R}^n}(a_ke_1,\frac{1}{2}a_k)\cup B_{\mathbb{R}^n}(4a_ke_1,\frac{1}{2}a_k)\big)\bigg)\bigcup H_k.$$
Now we define the manifold $\M=K_0\cup\bigcup\limits_{k=1}^\infty Q_k$, where $K_0=\bar{B}_{\mathbb{R}^n}(o,\frac{a_1}{3})$.
By our self-similar assumption on such manifold, there exists a fixed block $(Q,h)$ such that $(Q_k,g)\cong (Q,a_k^2h)$ is isometric.
\begin{figure}[H]
    \begin{tikzpicture}
        \draw[dashed] (8,0) ellipse (2 and 0.5);
        \draw[dashed] (8,0) ellipse (8 and 2);
        \draw[dashed] (8,0) ellipse (0.125 and 0.0375);
        \draw[dashed] (8.5,0) ellipse (0.25 and 0.0625);
        \draw[dashed] (9.5,0) ellipse (0.25 and 0.0625);
        \draw[dashed] (11,0) ellipse (0.5 and 0.25);
        \draw[dashed] (15,0) ellipse (0.5 and 0.25);
        \draw[use Hobby shortcut] (10.5,0)..(13,2)..(15.5,0);
        \draw[use Hobby shortcut] (11.5,0)..(13,1)..(14.5,0);
        \draw[use Hobby shortcut] (8.25,0)..(9,0.65)..(9.75,0);
        \draw[use Hobby shortcut] (8.75,0)..(9,0.325)..(9.25,0);
        \node at (8,-1.5) {$(Q,h)$};
    \end{tikzpicture}
    \caption{Schematic figure of standard block $(Q,h)$}
    \label{Figure 2}
\end{figure}
We denote $\partial_-Q=\partial B_{\mathbb{R}^n}(o,\frac{1}{3}),\partial_+Q=\partial B_{\mathbb{R}^n}(o,\frac{16}{3})$, and $\delta:=d_Q(\partial_-Q,\partial_+Q)>0$, $D=\operatorname{diam}_h(Q)<\infty$. And $\partial_-Q_k,\partial_+Q_k$ will be the corresponding part of $\partial Q_k$.

We now prove that this example is indeed a counterexample for our small curvature assumption. 
In order to verify assumption (a) and (b) in Theorem \ref{thm1}, we prove that such a manifold is an $\operatorname{SOB}(n)$ manifold in our weak sense.
If this is true, then by following the proof of \cite[Theorem 1.2]{hein2011weighted}, we immediately establish our assumption (a) and (b) both hold by choosing $\alpha=\frac{n}{n-2},\beta=n$ and $\alpha=1,\beta=n$, respectively.

For (i) in Definition \ref{weakSOB}, since $a_{k+1}=16a_k$ and the blocks $Q_k$ are mutually homothetic, we may choose $r_0>0$ such that $\partial B_o(r_0)$ lies in the interior of $Q_1$, away from its two boundary components. 
By self-similarity and the serial structure of the blocks, the corresponding distance levels $16^{k-1}r_0$ lie in the interiors of $Q_k$. 
Moreover, the region between two consecutive such levels is contained in the union of two adjacent self-similar blocks and is connected. 
Hence $A(o,16^{k-1}r_0,16^kr_0)$ is connected for every $k\ge1$.

Before we begin to prove (ii) and (iii) in Definition \ref{weakSOB}, we first prove that there exist constants $c,C_0$ such that for all $x\in Q_k$, we have $ca_k\leq r(x)\leq C_0a_k$. 
For $k\geq 2$, and for any path $\gamma$ from $x_0$ to some point $x\in Q_k$, $\gamma$ has to pass through $Q_{k-1}$. 
We pick the segment that $\gamma$ lies in $Q_{k-1}$. 
If $\gamma$ passes through $Q_{k-1}$ several times, we pick the segment that last leaves $\partial_-Q_{k-1}$ and first reaches $\partial_+Q_{k-1}$.
Then it's easy to see that the length of this segment is at least $\delta a_{k-1}$. 
Thus, we obtain $r(x)\geq \frac{\delta}{16}a_k$.
For the case $k=1$, it's easy to see that $r(x)\geq\frac{1}{3}a_1$. Thus we pick $c=\min\{\frac{1}{3},\frac{\delta}{16}\}$ to establish the lower bound.
And for upper bound, by our self-similar structure of $Q_k$, we have \begin{equation}
    r(x)\leq \frac{1}{3}a_1+D\sum\limits_{j=1}^ka_j\leq\bigg(\frac{1}{3}+\frac{16}{15}D\bigg)a_k,
\end{equation} which implies that our distance estimate holds.

Now we are going to derive the volume estimates. First of all, we derive the upper bound of geodesic balls. 
Let $V:=\Vol(Q), V_0:=\Vol(K_0)$ be volume of standard block and center ball, respectively.
Then by our self-similar assumption, $\Vol(Q_k)=Va_k^n$. Therefore, we have \begin{equation}
    \begin{aligned}
        \Vol(B_{o}(R))&\leq V_0+V\sum\limits_{a_k\leq\frac{R}{c}}a_k^n\\
        &\leq V_0+V\sum\limits_{a_k\leq\frac{R}{c}}4^{(2k-1)n}=V_0+4^nV\frac{16^{n(k_0+1)}-1}{16^n-1}\\
        &\leq V_0+4^nV\frac{16^n}{16^n-1}\bigg(\frac{4R}{c}\bigg)^n\leq CR^n,
    \end{aligned}
\end{equation}
where $k_0$ is the largest number such that $a_k\leq\frac{R}{c}$. This finishes our proof of (ii) in Definition \ref{weakSOB}.

For lower bound volume estimate, first we expand $Q_k$ into $U_k$ by the following way: \begin{equation}
    U_k=Q_k\cup\{\frac{5}{16}a_k\leq|x|\leq\frac{a_k}{3}\}\cup\{\frac{16}{3}a_k\leq|x|\leq6a_k\}.
\end{equation}
Then it's easy to verify that $U_k\cap\partial H_k=\emptyset$ and $(U_k,g)\cong(U,a_k^2h)$. 
Now for a nonempty subset $\Omega$, we denote $B_\Omega(y,t)$ be the set of those points $z$ such that $z$ and $y$ can be connected by a piecewise smooth curve $\gamma\subset\Omega$ with length $\leq t$.
Then for any $x\in Q$ and $t\in(0,1]$, we claim that $\Vol(B_U(x,t))\geq \nu t^n$. 
By compactness of $Q$, we can pick finitely many local charts $\phi_i:B_{\mathbb{R}^n}(0,2)\rightarrow U$ such that $Q\subseteq\bigcup\limits_i\phi_i(B_{\mathbb{R}^n}(0,1))$. 
Then by finiteness, we can pick a constant $L\geq 1$ such that $L^{-1}g_{\mathbb{R}^n}\leq \phi_i^*h\leq Lg_{\mathbb{R}^n}$ holds for any $i$.
For $y=\phi_i(z),|z|<1$ and $t\leq 1$, we have $B_{\mathbb{R}^n}(z,\frac{t}{\sqrt{L}})\subseteq B_{\mathbb{R}^n}(0,2)$, which implies $\phi_i(B_{\mathbb{R}^n}(z,\frac{t}{\sqrt{L}}))\subseteq B_U(y,t)$.
Therefore, since $g_{\mathbb{R}^n}$ and $\phi_i^*h$ is uniformly equivalent, we have \begin{equation}
    \Vol(B_x(t))\geq \Vol(B_U(x,t))\geq\Vol(\phi_i(B_{\mathbb{R}^n}(z,\frac{t}{\sqrt{L}})))\geq \omega_nL^{-n}t^n.
\end{equation}
For $k$-th block, for $x\in Q_k$, and $0\leq t\leq a_k$, we have \begin{equation}
    \Vol(B_x(t))\geq a_k^n\Vol(B_U(y,\frac{t}{a_k}))\geq a_k^n\omega_nL^{-n}(\frac{t}{a_k})^n=\omega_nL^{-n}t^n.
\end{equation}
Now we pick $\theta=\min\{\frac{1}{2},C_0^{-1}\}$, then $r(x)\leq C_0a_k$ implies $\theta r(x)\leq a_k$. 
Therefore, we obtain that $\Vol(B_x((1-\frac{1}{C})r(x)))\geq\omega_nL^{-n}\theta^nr(x)^n$ by choosing $C$ sufficiently large.

Finally we show that the Ricci curvature of our manifold has quadratic decay, which finishes our proof of (iii) in Definition \ref{weakSOB} . To see this, notice that the curvature vanishes outside those handles.
Thus it suffices to consider the case on those handles. Since $H$, the first handle, is compact, thus the Ricci curvature has a lower bound $-\kappa$ there.
By our self-similar assumption, on the $k$-th handle, it's easy to see that $\Ric\geq-\kappa a_k^{-2}$.
Notice that on points $x \in H_k$, we have $r(x)\leq C_0a_k$. Therefore, we have $\Ric(x)\geq -C_0^{2}\kappa r(x)^{-2}$, which finishes the proof of (iii) in Definition \ref{weakSOB}.

By our construction, such a manifold is complete and noncompact. Thus following the argument of \cite[Theorem 1.2]{hein2011weighted}, we can also derive the weighted Sobolev inequality.
Thus our assumption (a)(b) are verified by choosing $\alpha=\frac{n}{n-2},\beta=n$ and $\alpha=1,\beta=n$ in Hein's weighted Sobolev inequality, respectively.
And our curvature decay is derived before. 

We now show that the space of bounded harmonic $1$-forms on $\M$ is infinite-dimensional, which demonstrates the necessity of the smallness assumption on the Ricci curvature. 
Let $\Sigma_k$ be a cross-section of $H_k$, and let $$\Psi_k:[-2b_k,2b_k]\times\Sigma_k\rightarrow H_k$$be the corresponding product parametrization. 
Choose $\chi_k\in C^\infty_0(-b_k,b_k)$ such that $\int_{-b_k}^{b_k}\chi_k(t)dt=1$, and define $\eta_k(x):=\chi_k(t_k)dt_k$ on $H_k$, where $t_k$ denotes the first coordinate with respect to $\Psi_k$.
 Since $\chi_k$ is compactly supported in $(-b_k,b_k)$, we may extend $\eta_k$ by zero to obtain a smooth $1$-form on the whole manifold $\M$. 
 It is then immediate that $\operatorname{supp}\eta_k\subseteq H_k$ and $d\eta_k=\chi_k'(t_k)d t_k\wedge d t_k=0$.

For each $k\geq 1$, we construct a closed curve $\gamma_k$ as follows. 
Starting from a point on one end of the handle $H_k$, the curve passes through this handle and reaches the other end of this handle.
Then it goes back to the starting point in the flat region.
Without loss of generality, we may assume that $\gamma_k$ intersects $\Sigma_k$ transversely at exactly one point. 
Notice that the curve $\gamma_k$ is not null-homotopic for all $k$. Choosing the orientation appropriately, the normalization of $\chi_k$ gives
\begin{equation}
	\int_{\gamma_k}\eta_j=\delta_{jk},\quad j,k\geq1.
\end{equation}

We now show that the classes $[\eta_1],[\eta_2],\cdots,[\eta_N]\in H_c^1(\M)$, are linearly independent in $H_c^1(\M)$, where $H_c^1(\M)$ denotes the de Rham cohomology with compact support. 
Indeed, suppose that, for some $N\geq 1$,\begin{equation}
	c_1[\eta_1]+c_2[\eta_2]+\cdots+c_N[\eta_N]=0,
\end{equation}
 Then there exists a function $f\in C_0^\infty(\M)$ such that \begin{equation}
 	\sum\limits_{j=1}^Nc_j\eta_j=df.
 \end{equation}
Integrating over $\gamma_k$ for each $k\in{1,\ldots,N}$, we obtain $$c_k=\sum\limits_{j=1}^Nc_j\int_{\gamma_k}\eta_j=\int_{\gamma_k}df=0,$$ since $\gamma_k$ is a closed curve. 
Hence $[\eta_1],[\eta_2],\cdots,[\eta_N]$ are linearly independent for every $N$, and therefore $$\dim H_c^1(\M)=\infty.$$

Since $\M$ is complete and has only one end, \cite[Lemma 2.3]{carron20072} implies that the natural map $H_c^1(\M)\rightarrow H_{(2)}^1(\M)$ is injective, where $H_{(2)}^1(\M)$ denotes the reduced $L^2$ cohomology. 
Consequently, $\dim H_{(2)}^1(\M)=\infty$. Moreover, since $\M$ is complete, the reduced $L^2$ cohomology is naturally isomorphic to the space of $L^2$ harmonic forms (see \cite[Corollary 1.6]{carron20072}). 
It follows that the space of $L^2$ harmonic $1$-forms on $\M$ is infinite-dimensional.

On the other hand, by our self-similar construction, $\M$ has bounded geometry. Hence, by standard interior elliptic estimates, for every $L^2$ harmonic $1$-form $\omega$ and every integer $m>\frac n2$, we have \begin{equation}
	||\omega||_{H^m(B_x(\frac{r_0}{2}))}\leq C_m||\omega||_{L^2(B_x(r_0))}\leq C_m||\omega||_{L^2(\M)},
\end{equation}
where $r_0=\min\{1,\frac{1}{4}\operatorname{inj}(\M,g)\}$ is a small radius, and $C_m$ is independent of $x\in\M$. By the Sobolev embedding theorem, \begin{equation}
	\sup\limits_{B_x(\frac{r_0}{2})}|\omega|\leq C_m||\omega||_{L^2(\M)}.
\end{equation} Since $x\in\M$ is arbitrary, this yields\begin{equation}
	||\omega||_{L^\infty(\M)}\leq C_m||\omega||_{L^2(\M)}.
\end{equation}
Thus every $L^2$ harmonic $1$-form on $\M$ is bounded. Since the space of $L^2$ harmonic $1$-forms is infinite-dimensional, the space of bounded harmonic $1$-forms on $\M$ is also infinite-dimensional. This completes the construction of the desired counterexample.

\section{Proof of Theorem \ref{polyrepre}}\label{sec6}
In this section, we will prove Theorem \ref{polyrepre}. First we will prove a lemma.
\begin{lemma}\label{lem5.1}
    Let $\M$ be a complete manifold satisfying the weak Hardy inequality in Theorem \ref{thm1}. 
    And let $u$ be an ancient solution of the equation $\pa_tu-\Delta u-Vu=0$. 
    If the smooth function $V$ satisfies $$ 0\leq V(x)\leq\frac{\varepsilon_0}{1+r^2(x)},$$where $\varepsilon_0$ is sufficiently small such that $\varepsilon_0 C_H<1$. Then there exists a constant $C$ independent of the radius $R>0$ such that \begin{equation}
        R^4\int_{Q_{\frac{R}{10}}}u^2_tdxdt+R^2\int_{Q_{\frac{R}{10}}}|\nabla u|^2dxdt\leq C\int_{Q_R}u^2dxdt
    \end{equation}holds for all $R>0$.
\end{lemma}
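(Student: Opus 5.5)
The plan is to prove a parabolic Caccioppoli inequality in two stages, first for $|\nabla u|^2$ and then for $u_t^2$. Write $Q_R:=[-R^2,0]\times B_p(R)$. At every step where the potential appears, the term $\int Vu^2$ is absorbed using the weak Hardy inequality (b) together with $\varepsilon_0C_H<1$. The key observation is that for any cutoff $\phi\in C_0^\infty(\M)$,
$$\int_\M V\phi^2u^2\,dx\le \varepsilon_0\int_\M\frac{(\phi u)^2}{1+r^2}\,dx\le \varepsilon_0C_H\int_\M|\nabla(\phi u)|^2\,dx,$$
so the potential costs only a fraction $\varepsilon_0C_H<1$ of the Dirichlet energy of $\phi u$. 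All cutoffs will have gradient $O(R^{-1})$ and time derivative $O(R^{-2})$, so every constant is scale invariant.

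\emph{Step 1 (gradient bound).} Let $\psi(x,t)=\phi(x)\eta(t)$, where $\phi=1$ on $B_p(R/2)$, $\operatorname{supp}\phi\subset B_p(R)$ and $|\nabla\phi|\le 4/R$, and where $\eta=1$ on $[-R^2/4,0]$, $\eta(-R^2)=0$ and $|\eta'|\le 8/R^2$. Multiply the equation by $\psi^2u$ and use the identity $\int\psi^2u\Delta u=-\int|\nabla(\psi u)|^2+\int|\nabla\psi|^2u^2$. This gives
$$\frac12\frac{d}{dt}\int\psi^2u^2+\int|\nabla(\psi u)|^2-\int V\psi^2u^2=\int|\nabla\psi|^2u^2+\int\eta\eta'\phi^2u^2 .$$
By the Hardy bound above, the left-hand side is at least $\frac12\frac{d}{dt}\int\psi^2u^2+(1-\varepsilon_0C_H)\int|\nabla(\psi u)|^2$. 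Integrating in $t$ from $-R^2$ yields
$$\sup_{t\in[-R^2/4,0]}\int_{B_p(R/2)}u^2+\int_{Q_{R/2}}|\nabla u|^2\le CR^{-2}\int_{Q_R}u^2 ,$$
where we use $|\nabla u|^2\le2|\nabla(\psi u)|^2+2u^2|\nabla\psi|^2$ on the set where $\psi=1$. This is the gradient half of the lemma, with room to spare in the radii.

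\emph{Step 2 (time derivative bound).} On the cylinder $Q_{R/2}$, take a new cutoff $\psi$ adapted to $Q_{R/2}\supset Q_{R/10}$. Multiply the equation by $\psi^2u_t$ and integrate over space-time. Since $V$ is independent of $t$, we have $u_t\Delta u\sim-\frac12\partial_t|\nabla u|^2$ and $Vuu_t=\frac12\partial_t(Vu^2)$, up to cutoff error terms. This gives
$$\int\!\!\int\psi^2u_t^2=-\frac12\int\psi^2\big(|\nabla u|^2-Vu^2\big)\Big|_{t=0}+\int\!\!\int\psi\psi_t\big(|\nabla u|^2-Vu^2\big)-2\int\!\!\int\psi u_t\nabla\psi\cdot\nabla u .$$
We then estimate the three terms in turn. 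The last term is handled by Cauchy–Schwarz, absorbing $\frac12\int\!\!\int\psi^2u_t^2$ into the left side and leaving $CR^{-2}\int_{Q_{R/2}}|\nabla u|^2$. The middle term is bounded by $CR^{-2}\int_{Q_{R/2}}|\nabla u|^2$ plus the $V$-part. For the $V$-part, apply the Hardy bound to $\tilde\phi u$ with a slightly larger spatial cutoff $\tilde\phi$; this is again controlled by $R^{-2}\int_{Q_{R/2}}(|\nabla u|^2+R^{-2}u^2)$.

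\emph{Main obstacle: the boundary term at $t=0$.} This term has no sign a priori, because $-\int\psi^2|\nabla u|^2$ competes with $+\int V\psi^2u^2$. I would handle it by writing $\psi^2|\nabla u|^2=|\nabla(\psi u)|^2-\operatorname{div}$-type terms. More precisely, the identity $\int\psi^2|\nabla u|^2=\int|\nabla(\psi u)|^2-\int u^2|\nabla\psi|^2-\int\psi^2u\Delta u$ is awkward at a fixed time. It is simpler to use $\int|\nabla(\psi u)|^2\le 2\int\psi^2|\nabla u|^2+2\int|\nabla\psi|^2u^2$ together with Hardy, which gives $\int V\psi^2u^2\le 2\varepsilon_0C_H\int\psi^2|\nabla u|^2+CR^{-2}\int_{B_p(R/2)}u^2$. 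Two outcomes are possible. If $2\varepsilon_0C_H\le1$, the boundary term is at most $CR^{-2}\sup_t\int_{B_p(R/2)}u^2$, which Step 1 bounds by $CR^{-4}\int_{Q_R}u^2$. Otherwise, I would apply Hardy directly to $\psi u$ and use $\psi^2|\nabla u|^2\ge(1-\delta)|\nabla(\psi u)|^2-C_\delta|\nabla\psi|^2u^2$ with $\delta$ chosen so that $(1-\delta)>\varepsilon_0C_H$. This gives the same conclusion for any $\varepsilon_0C_H<1$.

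\emph{Conclusion.} Combining the estimates gives $R^4\int_{Q_{R/10}}u_t^2\le CR^2\int_{Q_{R/2}}|\nabla u|^2+C\int_{Q_R}u^2\le C\int_{Q_R}u^2$, by Step 1. Adding this to the gradient bound of Step 1 proves the lemma.
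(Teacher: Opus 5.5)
Your proof is correct and follows essentially the same route as the paper. You first use the multiplier $\phi^2u$, with Hardy absorbing $\int V\phi^2u^2$; then the multiplier $\psi^2u_t$, absorbing the $t=0$ boundary term $\frac12\int\psi^2(Vu^2-|\nabla u|^2)$ via Hardy and Young and bounding the remainder $\int u^2|\nabla\psi|^2\big|_{t=0}$ by the first step. The only difference is technical: you use a time cutoff vanishing at the initial time, whereas the paper averages to select a good initial time slice and drops the initial $V$-term by its sign.
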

\begin{proof}
    For convenience, for any $R>0$, we denote $Q_R:=B_p(R)\times[-R^2,0]$. 
    Let $\phi$ be a cut-off function on $\M$. Then we have \begin{equation}
        \begin{aligned}
            0&=\int_\M \phi^2u\partial_tudx-\phi^2u\Delta udx-Vu^2\phi^2dx\\
            &=\frac{1}{2}\frac{\partial}{\partial t}\int_\M \phi^2u^2dx+\int_\M \langle 2\phi u\nabla\phi+\phi^2\nabla u,\nabla u\rangle dx-\int_\M V\phi^2u^2dx.
        \end{aligned}
    \end{equation}
    Then by applying Cauchy-Schwarz inequality and our assumption (b), we obtain \begin{equation}
    	\begin{aligned}
    		\frac{1}{2}\frac{\pa}{\pa t}&\int_\M\phi^2u^2dx=-\int_\M \langle 2\phi u\nabla\phi+\phi^2\nabla u,\nabla u\rangle dx+\int_\M V\phi^2u^2dx\\
    		&\leq (-1+\delta_2)\int_M\phi^2|\nabla u|^2dx+\frac{1}{\delta_2}\int_\M u^2|\nabla\phi|^2dx+\varepsilon_0\int_\M\frac{\phi^2u^2}{1+r^2(x)}dx\\
    		&\leq \bigg[-1+\delta_2+\varepsilon_0 C_H(1+\eta_1)\bigg]\int_M\phi^2|\nabla u|^2dx+\bigg[\frac{1}{\delta_2}+\varepsilon_0 C_H(1+\frac{1}{\eta_1})\bigg]\int_\M u^2|\nabla\phi|^2dx\\
    	\end{aligned}
    \end{equation}
    Pick $$\delta_2=\frac{1}{2}\big(1-\varepsilon_0 C_H(1+\eta_1)\big),\eta_1=\frac{1}{2}\big(\frac{1}{\varepsilon_0 C_H}-1\big),$$ then we obtain \begin{equation}
        \frac{1}{2}\frac{\pa}{\pa t}\int_\M\phi^2u^2dx+\frac{1}{4}(1-\varepsilon_0 C_H)\int_\M\phi^2|\nabla u|^2dx\leq C\int_\M u^2|\nabla\phi|^2dx.
    \end{equation}
    Choose $\phi$ be a smooth cut-off function such that $$\phi=\bigg\{\begin{aligned}
        &1,\quad x\in B_p(\frac{R}{2})\\
        &0,\quad x\in B_p^C(R)
    \end{aligned},\quad |\nabla\phi|\leq \frac{2}{R}.$$
    Integration on $[-R^2,0]$, then we have \begin{equation}\label{5.4}
        \frac{1}{2}\int_\M \phi^2u^2dx\bigg|_{t=0}+\frac{1}{4}\big(1-\varepsilon_0 C_H\big)\int_{Q_{\frac{R}{2}}}|\nabla u|^2dxdt\leq \frac{C}{R^2}\int_{Q_R}u^2dxdt+\frac{1}{2}\int_{B_p(R)}u^2dx\bigg|_{t=-R^2}.
    \end{equation}
    Now we will derive a similar upper bound for $u_t$. Let $\psi$ be a smooth cut-off function on $\M$. Integration by parts yields \begin{equation}
        \begin{aligned}
            \pa_t\int_\M |\nabla u|^2\psi^2dx&=\int_\M 2\langle \nabla u,\nabla u_t\rangle\psi^2 dx=-\int_\M 2\psi ^2u_t\Delta udx-4\int_\M u_t\psi\langle \nabla u,\nabla\psi\rangle dx\\
            &\leq -\int_\M \psi^2u_t^2dx+\pa_t\int_\M V\psi^2u^2+4\int_\M |\nabla u|^2|\nabla\psi|^2dx.
        \end{aligned}
    \end{equation}
    Choose $\psi$ be a smooth cut-off function such that $$\psi=\bigg\{\begin{aligned}
        &1,\quad x\in B_p(\frac{r_0}{2})\\
        &0,\quad x\in B_p^C(r_0)
    \end{aligned},\quad |\nabla\psi|\leq \frac{2}{r_0}.$$
    Integrate on $[-r_0^2,0]$ and apply Cauchy-Schwarz inequality and weak Hardy inequality, we obtain \begin{equation}\label{5.6}
        \begin{aligned}
            \int_\M \psi^2|\nabla u|^2dx\bigg|_{t=0}&+\int_{Q_{\frac{r_0}{2}}}u_t^2dxdt\\
            &\leq\int_M V\psi^2u^2dx\bigg|_{t=0}+\int_{B_p(r_0)}|\nabla u|^2dx\bigg|_{t=-r_0^2}+\frac{16}{r_0^2}\int_{Q_{r_0}}|\nabla u|^2dxdt\\
            &\leq \varepsilon_0\int_\M\frac{\psi^2u^2}{1+r^2(x)}dx\bigg|_{t=0}+\frac{16}{r_0^2}\int_{Q_{r_0}}|\nabla u|^2dxdt+\int_{B_p(r_0)}|\nabla u|^2dx\bigg|_{t=-r_0^2}\\
            &\leq \varepsilon_0 C_H(1+\eta_2)\int_\M \psi^2|\nabla u|^2dx\bigg|_{t=0}+\varepsilon_0 C_H(1+\frac{1}{\eta_2})\int_\M u^2|\nabla\psi|^2dx\bigg|_{t=0}\\&+\frac{16}{r_0^2}\int_{Q_{r_0}}|\nabla u|^2dxdt+\int_{B_p(r_0)}|\nabla u|^2dx\bigg|_{t=-r_0^2}.
        \end{aligned}
    \end{equation}
    
    Since $\varepsilon_0 C_H<1$, then we can choose $\eta_2$ such that $\varepsilon_0 C_H(1+\eta_2)<1.$ Then we obtain \begin{equation}\label{5.7}
        \int_{Q_{\frac{r_0}{2}}}u_t^2dxdt\leq \varepsilon_0 C_H(1+\frac{1}{\eta_2})\int_\M u^2|\nabla\psi|^2dx\bigg|_{t=0}+\frac{16}{r_0^2}\int_{Q_{r_0}}|\nabla u|^2dxdt+\int_{B_p(r_0)}|\nabla u|^2dx\bigg|_{t=-r_0^2}.
    \end{equation}
    Choose $R_1\in[\frac{4}{5}R,R]$ such that $$\int_{B_p(R)}u^2dx\bigg|_{t=-R_1^2}\leq\frac{25}{9R^2}\int_{Q_R}u^2dxdt.$$ Take $R=R_1$ in (\ref{5.4}), then we have \begin{equation}\label{5.8}
        \frac{1}{2}\int_{B_{\frac{2}{5}R}}u^2dx\bigg|_{t=0}+\frac{1}{4}\big(1-\varepsilon_0 C_H\big
        )\int_{Q_{\frac{2}{5}R}}|\nabla u|^2dxdt\leq \frac{C}{R^2}\int_{Q_R}u^2dxdt.
    \end{equation}
    Similarly, choose $r_1\in[\frac{r_0}{5},\frac{2r_0}{5}]$ such that $$\int_{B_{\frac{2r_0}{5}}}|\nabla u|^2dx\bigg|_{t=-r_1^2}\leq \frac{25}{3r_0^2}\int_{Q_{\frac{2r_0}{5}}}|\nabla u|^2dxdt.$$
    Take $r_0=r_1$ in (\ref{5.7}), we obtain \begin{equation}
        \int_{Q_{\frac{r_0}{10}}}u_t^2dxdt\leq\frac{100\varepsilon_0 C_H}{r_0^2}(1+\frac{1}{\eta_2})\int_{B_{\frac{2r_0}{5}}}u^2dx\bigg|_{t=0}+\frac{C_0}{r_0^2}\int_{Q_{\frac{2r_0}{5}}}|\nabla u|^2dxdt,
    \end{equation} where $C_0$ is a large universal constant.

    Let $R=r_0$ in above estimates, then we have \begin{equation}\label{5.10}
        \begin{aligned}
            \int_{Q_{\frac{R}{10}}}u_t^2dxdt&\leq\frac{100\varepsilon_0 C_H}{R^2} (1+\frac{1}{\eta_2})\int_{B_{\frac{2R}{5}}}u^2dx\bigg|_{t=0}+\frac{C_0}{R^2}\int_{Q_{\frac{2R}{5}}}|\nabla u|^2dxdt\\
            &\leq \frac{100\varepsilon_0 C_H}{R^2}(1+\frac{1}{\eta_2})\int_{B_{\frac{2R}{5}}}u^2dx\bigg|_{t=0}\\&+\frac{C_0}{R^2}\bigg(\frac{C}{R^2}\int_{Q_R}u^2dxdt-\frac{2}{1-\varepsilon_0 C_H}\int_{B_{\frac{2R}{5}}}u^2dx\bigg|_{t=0}\bigg).
        \end{aligned}
    \end{equation}
    Observe that when $C_0>50$, there exists $\eta_2$ such that $100\varepsilon_0 C_H(1+\frac{1}{\eta_2})<\frac{2C_0}{1-\varepsilon_0 C_H}$ and $\varepsilon_0 C_H(1+\eta_2)<1$ both hold. Then by choosing $\eta_2$ as above and combining (\ref{5.8}) and (\ref{5.10}), we obtain \begin{equation}
        R^4\int_{Q_{\frac{R}{10}}}u^2_tdxdt+R^2\int_{Q_{\frac{R}{10}}}|\nabla u|^2dxdt\leq C\int_{Q_R}u^2dxdt.
    \end{equation}
   This completes the proof.
\end{proof}
Using this Lemma, we have the following corollary, which implies Theorem \ref{polyrepre} immediately:
\begin{corollary}\label{cor5.1}
    Let $\M$ be a complete manifold satisfying the assumptions (b) of Theorem~\ref{thm1} and polynomial volume growth, namely, $\Vol(B_p(R))\leq CR^\nu$. Let $u$ be an ancient solution to $\partial_t u - \Delta u - V u = 0$ with polynomial growth of order $l$, where $V$ is the smooth function satisfying same assumptions as in Theorem \ref{polyrepre}, 
then $\partial_t^{\,k} u \equiv 0$ for all integers $k$ satisfying $4k>2l+\nu+2.$
\end{corollary}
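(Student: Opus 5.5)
The plan is the Colding--Minicozzi / Lin--Zhang scheme: iterate the reverse Poincaré inequality of Lemma \ref{lem5.1} along time derivatives. Since $V$ does not depend on $t$, each $\partial_t^j u$ is again an ancient solution of $\partial_t w-\Delta w-Vw=0$. Lemma \ref{lem5.1} therefore applies to $w=\partial_t^{j}u$ for every $j\ge0$, and gives
\begin{equation*}
\int_{Q_{R/10}}(\partial_t^{j+1}u)^2\,dx\,dt\le \frac{C}{R^4}\int_{Q_R}(\partial_t^j u)^2\,dx\,dt,
\end{equation*}
with $C$ independent of $R$ and of $j$. The constant in Lemma \ref{lem5.1} depends only on $\varepsilon_0C_H$, so it is the same at every step.

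\textbf{Iteration.} Applying this $k$ times with radii $R,\,R/10,\,R/100,\dots$ gives
\begin{equation*}
\int_{Q_{10^{-k}R}}(\partial_t^k u)^2\,dx\,dt\le C^k\,10^{4k(k-1)/2}\,R^{-4k}\int_{Q_R}u^2\,dx\,dt.
\end{equation*}
Rescaling $R\mapsto10^kR$, the factor depending on $k$ becomes a constant $C_k$ and we get
\begin{equation*}
\int_{Q_R}(\partial_t^k u)^2\le C_kR^{-4k}\int_{Q_{10^kR}}u^2.
\end{equation*}
The growth hypothesis gives $|u|\le C(1+10^kR)^l$ on $Q_{10^kR}$. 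Combined with $\Vol(B_p(10^kR))\le C(10^kR)^\nu$ and the time interval of length $(10^kR)^2$, this yields
\begin{equation*}
\int_{Q_{10^kR}}u^2\le C_k' R^{2l+\nu+2}
\end{equation*}
for large $R$. Hence
\begin{equation*}
\int_{Q_R}(\partial_t^k u)^2\le C R^{2l+\nu+2-4k}.
\end{equation*}

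\textbf{Conclusion.} If $4k>2l+\nu+2$, the right-hand side tends to $0$ as $R\to\infty$. Since $\int_{Q_{R}}(\partial_t^ku)^2$ is nondecreasing in $R$, it vanishes for every $R$, so $\partial_t^k u\equiv0$ on $(-\infty,0)\times\M$. Theorem \ref{polyrepre} then follows: integrating $k$ times in $t$ shows $u$ is a polynomial in $t$ of degree less than $k$, with smooth spatial coefficients.

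\textbf{Main obstacle.} The delicate point is justifying the integrations by parts in Lemma \ref{lem5.1} for $\partial_t^j u$. Those integrals involve only compactly supported cutoffs on bounded space-time cylinders, so smoothness of $u$ suffices and no growth of $\partial_t^ju$ is needed a priori. This is exactly why I iterate the local inequality rather than invoking global bounds on the derivatives. The remaining step to verify is the uniformity of $C$ in $j$, which holds as noted above.
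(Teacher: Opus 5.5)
Your proposal is correct and is essentially the paper's proof. Both use that $\partial_t^j u$ is again a solution because the metric and $V$ are time-independent, iterate Lemma~\ref{lem5.1} to get $\int_{Q_{R/10^k}}|\partial_t^k u|^2\le C(k)R^{-4k}\int_{Q_R}u^2\le C R^{2l+\nu+2-4k}$, and let $R\to\infty$; your explicit factor $10^{2k(k-1)}$, the rescaling $R\mapsto 10^kR$, and the monotonicity of $R\mapsto\int_{Q_R}(\partial_t^k u)^2$ just spell out the induction the paper leaves implicit.
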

\begin{proof}
    Since the metric on $\M$ and function $V$ are time-independent, the operator $\pa_t-\Delta-V$ commutes with $\pa_t$. Consequently, $\pa_t^ju$ is also an ancient solution to $\partial_tu-\Delta u-Vu=0$. Apply Lemma \ref{lem5.1} and by induction, we have \begin{equation}
        \int_{Q_{\frac{R}{10^k}}}|\pa_t^ku|^2dxdt\leq\frac{C(k)}{R^{4k}}\int_{Q_R}u^2dxdt\leq\frac{C}{R^{4k-2-2l}}R^\nu.
    \end{equation}
    If $4k>2l+\nu+2$, then the right-hand side goes to zero when $R\rightarrow\infty$, and this corollary is proved.
\end{proof}

\section{Proof of Theorem \ref{dim_est_anc}}\label{sec7}
In this section we will prove Theorem \ref{dim_est_anc}. This theorem will be obtained from Theorem \ref{thm1} and the following lemma. 
\begin{lemma}\label{lem6.1}
    Let $\M$ be a complete manifold as in Theorem \ref{dim_est_anc} , and let $\mathcal{S}_l(\M),\mathcal{SH}_l(\M)$ be defined as in Theorem \ref{dim_Sch} and \ref{polyrepre}, respectively. Then for every positive integer $k$, \begin{equation}
        \dim \mathcal{SH}_{2k}(\M)\leq (k+1)\dim\mathcal{S}_{2k}(\M).
    \end{equation}
\end{lemma}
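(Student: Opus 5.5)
We adapt the linear algebra argument of Colding--Minicozzi \cite{colding2021optimal}. Fix $u\in\mathcal{SH}_{2k}(\M)$. Lemma \ref{volume_growth} gives polynomial volume growth of order $\nu=n+\varepsilon$, so Corollary \ref{cor5.1} and Theorem \ref{polyrepre} show that $u$ is a polynomial in $t$:
$$u(x,t)=\sum_{j=0}^{m}u_j(x)t^j .$$
Substituting into $\partial_t u-\Delta u-Vu=0$ and comparing coefficients of $t^j$ gives
$$(\Delta+V)u_m=0,\qquad (\Delta+V)u_j=(j+1)u_{j+1}\quad (0\le j<m).$$
Next I would bound the growth of each $u_j$ and the degree $m$. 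The quantities $\partial_t^ju$ are again ancient solutions, and iterating Lemma \ref{lem5.1} gives
$$\int_{Q_{R/10^j}}|\partial_t^ju|^2\le CR^{-4j}\int_{Q_R}u^2\le CR^{4k-4j+\nu+2}.$$
To convert this $L^2$ bound into a pointwise bound, I would use a parabolic mean value inequality. Alternatively, I would use the polynomial structure directly: evaluate $u$ at $m+1$ fixed times $t=-s_iR^2$ with distinct $s_i\in(0,1]$ and invert the Vandermonde matrix. This gives $\sup_{B_p(R)}|u_j|\le C R^{2k-2j}$ for large $R$. In particular, since $u_j\to$ bounded-by-$R^{2k-2j}$ must hold for all $j\le m$, we need $2k-2j\ge 0$ whenever $u_j\ne0$. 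The point to check is that nonzero solutions of growth $R^{-\delta}$ cannot exist: such a $u_j$ with negative exponent tends to $0$ at infinity and is controlled by the energy/Hardy argument. Hence $m\le k$ and $u_j\in\mathcal{S}_{2k-2j}(\M)\subseteq\mathcal{S}_{2k}(\M)$.

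Then I would count dimensions via the map
$$\Phi:\mathcal{SH}_{2k}(\M)\to \bigoplus_{j=0}^{k}\mathcal{S}_{2k}(\M),\qquad u\mapsto(u_0,\dots,u_k).$$
This map is linear and injective, since $u$ is determined by its coefficients. Therefore $\dim\mathcal{SH}_{2k}\le(k+1)\dim\mathcal{S}_{2k}$. A sharper filtration is also available: the map $u\mapsto u_m$ into $\mathcal{S}_{2k-2m}$ with kernel of lower degree gives the Colding--Minicozzi-type bound. The crude bound suffices for the claim, and finiteness then follows from Theorem \ref{dim_Sch}.

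\textbf{Main obstacle.} The key step is showing that each coefficient $u_j$ has pointwise polynomial growth of order $2k-2j$, and also that it lies in $\mathcal{S}$. Note that $u_j$ for $j<m$ solves an inhomogeneous equation, not $(\Delta+V)u_j=0$. So only the top coefficient is genuinely in $\mathcal{S}$, and the injection above must be refined into the filtration argument. I would proceed by induction on $m$. The map $u\mapsto u_m\in\mathcal{S}_{2k-2m}\subseteq\mathcal{S}_{2k}$ has kernel consisting of solutions of degree $<m$. Iterating over $m=k,k-1,\dots,0$ gives at most $k+1$ steps, each contributing at most $\dim\mathcal{S}_{2k}$. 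The growth estimate on $u_m$ comes from the Vandermonde evaluation applied to $\sup_{[-R^2,0]\times B_p(R)}|u|\le C(1+R)^{2k}$. This gives $|u_m|\le CR^{2k-2m}$ on $B_p(R)$ directly, with no mean value inequality needed. The degree bound $m\le k$ then follows: otherwise $u_m$ would decay, and a decaying solution of $(\Delta+V)u_m=0$ vanishes by the maximum principle applied on large balls to $|u_m|$. The condition $V\ge0$ needs care here, so I would instead use the energy estimate with the Hardy inequality, absorbed by $\varepsilon_0C_H<1$.
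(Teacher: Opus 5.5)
Your overall architecture is the paper's. Corollary \ref{cor5.1} makes $u$ polynomial in $t$, Vandermonde inversion controls the coefficients, and the filtration by the top coefficient ($u\mapsto a_k$, then $a_{k-1}$ on the kernel, and so on, $k+1$ times) gives the bound. You are also right to abandon the injection $\Phi$ into $\bigoplus_j\mathcal{S}_{2k}(\M)$, since only the top coefficient solves the homogeneous equation. The genuine problem is your justification of the degree bound $m\le k$. You argue that a coefficient $u_j$ with $j>k$ would be a \emph{decaying} solution of $(\Delta+V)w=0$, and that such solutions vanish by the maximum principle or by the Hardy energy argument. That claim is false under the hypotheses of the theorem. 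On $\mathbb{R}^n$, $n\ge 3$, take $w=(1+|x|^2)^{-\beta}$. A direct computation gives $\Delta w+Vw=0$ with
\[
V=\frac{2\beta\big(n+(n-2-2\beta)|x|^2\big)}{(1+|x|^2)^2},\qquad 0\le V\le\frac{2\beta n}{1+|x|^2}.
\]
With the Euclidean constant $C_H=4/(n-2)^2$, the smallness condition $2\beta n\,C_H<1$ holds for small $\beta>0$. So $w\neq 0$ while $|w|=O(r^{-2\beta})$. The maximum principle has the wrong sign when $V\ge 0$. The energy argument breaks down because $w$ is not in the energy space: the cut-off error $R^{-2}\int_{B_p(2R)\setminus B_p(R)}w^2\sim R^{n-2-4\beta}$ does not tend to zero.

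No vanishing theorem is needed; you misread your own estimate. The Vandermonde bound is uniform over the whole ball, not a decay rate in $r(x)$. For fixed $x$ it gives $|u_j(x)|\le C(1+R)^{2k}R^{-2j}$ for \emph{every} $R\ge r(x)+1$, and letting $R\to\infty$ yields $u_j(x)=0$ for $j>k$. Equivalently, $\sup_{B_p(R)}|u_j|$ is nondecreasing in $R$, so it cannot be bounded by $CR^{2k-2j}\to 0$ unless it is zero. The paper does essentially this in a slightly different order. It fixes $x$ and lets $t\to-\infty$, using $|u(x,t)|=O(|t|^k)$ to cap the degree of the polynomial in $t$ at $k$. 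Only then does it invert a $(k+1)\times(k+1)$ Vandermonde matrix to get growth of order $2k$ for each coefficient. With this replacement your filtration argument goes through and proves the lemma.
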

\begin{proof}
    First, by Lemma \ref{volume_growth}, our manifold has polynomial volume growth, namely, $Vol(B_p(R))\leq CR^{n+\varepsilon}.$
    Choose an integer $m$ with $4m>4k+n+\varepsilon+2$. By Corollary \ref{cor5.1}, we obtain $\pa_t^mu=0$ for any $u\in\mathcal{SH}_{2k}(\M).$ Then any $u\in\mathcal{SH}_{2k}(\M)$ can be written as \begin{equation}
        u=a_0+a_1t+\cdots+a_{m-1}t^{m-1},
    \end{equation} where each $a_j$ is a function on $\M$. Moreover, since $u\in\mathcal{SH}_{2k}(\M)$, by fixing $x\in\M$ and letting $t\to-\infty$, we see that $a_j=0$ when $j>k$.
    Now we will show that each $a_j$ has a polynomial growth of order $2k$. To prove this, we pick $(k+1)$ distinct values $-1<t_0<t_1<\cdots<t_{k}<0$. And recall the Vandermonde determinant $$\det (t_i^{j})=\prod_{i<j}(t_j-t_i)\neq 0,$$ which yields that the matrix $(t_i^j)_{0\leq i,j\leq k}$ is invertible. Thus we can find $b_i^j$ such that \begin{equation}\label{6.3}
        e_j=\sum\limits_ib_i^j(1,t_i,t_i^2,\cdots,t_i^{k}),
    \end{equation} where $e_j=(0,\cdots,0,1,0,\cdots,0)\in\mathbb{R}^{k+1}$ is the vector in $\mathbb{R}^{k+1}$ with $j$-th component is $1$ and other components are $0$. Then it follows that \begin{equation}
        a_j=\sum\limits_ib_i^ju(x,t_i),
    \end{equation} which implies that each $a_j$ has polynomial growth of order $2k$.
    Since $u$ satisfies the equation $\pa_tu-\De u-Vu=0,$ then by direct computation, we have \begin{equation}
        \begin{aligned}
            &\De a_{k}+Va_{k}=0;\\
            &\De a_{j}+Va_j=(j+1)a_{j+1},\quad 0\leq j\leq k-1.
        \end{aligned}
    \end{equation}
    Consider the linear map $\Psi_0: \mathcal{SH}_{2k}(\M)\rightarrow\mathcal{S}_{2k}(\M)$ given by $\Psi_0(u)=a_k$. Let $\mathcal{K}_0:=\ker \Psi_0.$ Then it follows that $$\dim \mathcal{SH}_{2k}(\M)\leq \dim\mathcal{K}_0+\dim\mathcal{S}_{2k}(\M).$$
    If $u\in\mathcal{K}_0$, then $a_k=0$ and $\De a_{k-1}+Va_{k-1}=0$, so we consider a new linear operator $\Psi_1:\mathcal{K}_0\rightarrow \mathcal{S}_{2k}(\M)$ given by $\Psi_1(u)=a_{k-1}$. Let $\mathcal{K}_1=\ker \Psi_1$, then we have $$\dim\mathcal{K}_0\leq \dim\mathcal{K}_1+\dim\mathcal{S}_{2k}(\M).$$
    Repeating this process for $k+1$ times gives this result.
\end{proof}
Though Lemma \ref{lem6.1} is enough to prove our Theorem \ref{dim_est_anc}, this lemma is still a rough estimate. And our next lemma will help improve this estimate.
\begin{lemma}\label{lem6.2}
    If $u\in \mathcal{SH}_{2k}(\M)$ can be written as a polynomial of time $t$:$$u=a_0+a_1t+\cdots+a_kt^k,$$ then we have \begin{equation}
        |a_j(x)|\leq C_j(1+r(x)^{2k-2j}).
    \end{equation}
\end{lemma}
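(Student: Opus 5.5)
The plan is to exploit the parabolic scaling built into the growth condition. By definition of $\mathcal{SH}_{2k}(\M)$,
$$\sup_{[-R^2,0]\times B_p(R)}|u|\le C(1+R)^{2k}.$$
Fix $x\in\M$ and write $\rho:=1+r(x)$. For $R\ge \rho$ the point $x$ lies in $B_p(R)$, so for every $s\in[-R^2,0]$ we get $|u(x,s)|\le C(1+R)^{2k}$. We take $R=\rho$ and evaluate $u(x,\cdot)$ at $k+1$ times $s_i=\rho^2 t_i$, with $t_i$ the fixed distinct numbers in $(-1,0)$ chosen in the proof of Lemma \ref{lem6.1}. This gives
$$|u(x,\rho^2 t_i)|\le C(2\rho)^{2k}\le C'\rho^{2k}.$$

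Next we invert the Vandermonde system with scaling. We have $u(x,\rho^2 t_i)=\sum_{j=0}^k a_j(x)\rho^{2j}t_i^j$. Setting $\tilde a_j:=a_j(x)\rho^{2j}$, the vector $(\tilde a_j)_j$ solves the linear system with the fixed invertible matrix $(t_i^j)_{0\le i,j\le k}$. The coefficients $b_i^j$ from (\ref{6.3}) depend only on the $t_i$ and not on $x$ or $\rho$, so
$$\tilde a_j=\sum_i b_i^j\,u(x,\rho^2t_i),\qquad\text{hence}\qquad |a_j(x)|\rho^{2j}\le \Big(\sum_i|b_i^j|\Big)C'\rho^{2k}.$$
Dividing by $\rho^{2j}$ gives
$$|a_j(x)|\le C_j(1+r(x))^{2k-2j}\le C_j'\bigl(1+r(x)^{2k-2j}\bigr),$$
which is the claim. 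The constants $C_j$ depend only on $k$, the chosen $t_i$, and the growth constant of $u$.

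The only point needing care is to make sure the growth bound is used on the full parabolic cylinder of radius comparable to $1+r(x)$, so that time values of size $\rho^2$ are admissible, and to keep the scaling uniform in $x$. Because we rescale the fixed nodes $t_i$ instead of choosing new ones, the inverse matrix stays fixed and this is automatic. I do not foresee any further obstacle. This argument does not use the equation at all, only the polynomial-in-time representation provided by Corollary \ref{cor5.1} and Lemma \ref{lem6.1}.
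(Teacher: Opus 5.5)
Your proposal is correct and follows the paper's route. In both, one evaluates $u(x,\cdot)$ at the rescaled nodes $\rho^2 t_i$ and inverts the fixed Vandermonde system via the coefficients $b_i^j$ of (\ref{6.3}) to get $\rho^{2j}a_j(x)=\sum_i b_i^j\,u(x,\rho^2 t_i)$, then bounds the right-hand side by the growth condition with $\rho=1+r(x)$. The paper first records the intermediate bound $|u(x,t)|\le C(1+r(x)^{2k}+|t|^k)$, whereas you apply the cylinder bound at radius $\rho$ directly, which is slightly more direct.
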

\begin{proof}
    By growth assumption on $u$, there is a constant $C$ such that \begin{equation}
        |u(x,t)|\leq C(1+r(x)^{2k}+|t|^k).
    \end{equation}
    Fix $-1<t_0<t_1<\cdots<t_k<-\frac{1}{2}$ and coefficients $b_i^j$ so that (\ref{6.3}) holds for each $j$. Notice that \begin{equation}
        \sum\limits_ib_i^ju(x,R^2t_i)=\sum\limits_i\sum\limits_lb_i^ja_lR^{2l}t_i^l=\sum\limits_l\sum\limits_ib_i^ja_lR^{2l}t_i^l=R^{2j}a_j(x),
    \end{equation}
    thus we have \begin{equation}
       \begin{aligned}
            |R^{2j}a_j|&=|\sum\limits_ib_i^ju(x,R^2t_i)|\leq \max\limits_{i,j}|b_i^j|\sum\limits_{i}|u(x,R^2t_i)|\\
            &\leq C_1(1+r(x)^{2k}+\max\limits_{i}|R^2t_i|^k)\leq 3C_1R^{2k}.
       \end{aligned}
    \end{equation}
    From this we obtain our lemma by taking $R=1+r(x)$.
\end{proof}
Now we improve our estimate of Lemma \ref{lem6.1} by using Lemma \ref{lem6.2}.
\begin{theorem}
    Let $\M$ be a complete manifold as in Theorem \ref{dim_est_anc}, and let $\mathcal{SH}_l(\M),\mathcal{S}_l(\M)$ be defined as above. Then for any positive integer $k$, we have \begin{equation}
        \dim\mathcal{SH}_{2k}(\M)\leq \sum\limits_{i=0}^k\dim \mathcal{S}_{2i}(\M).
    \end{equation}
\end{theorem}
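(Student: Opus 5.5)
We refine the filtration argument of Lemma \ref{lem6.1}, using Lemma \ref{lem6.2} to sharpen the growth orders. Write any $u\in\mathcal{SH}_{2k}(\M)$ as $u=a_0+a_1t+\cdots+a_kt^k$. This is possible by Corollary \ref{cor5.1} together with the observation in the proof of Lemma \ref{lem6.1} that $a_j=0$ for $j>k$. The coefficients satisfy
$$\De a_k+Va_k=0,\qquad \De a_j+Va_j=(j+1)a_{j+1}\quad (0\le j\le k-1).$$
By Lemma \ref{lem6.2}, $|a_j(x)|\le C_j(1+r(x)^{2k-2j})$.

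We argue by induction on $k$, with the case $k=0$ being trivial. Consider the map
$$\Psi:\mathcal{SH}_{2k}(\M)\to\mathcal{S}_{2k}(\M),\qquad \Psi(u)=a_0(\cdot)=u(\cdot,0).$$
At first sight $a_0$ need not solve $\De a_0+Va_0=0$, so we instead proceed top-down. Consider
$$\Phi:\mathcal{SH}_{2k}(\M)\to\mathcal{S}_0(\M),\qquad \Phi(u)=a_k.$$
This is well defined: $a_k$ solves the stationary equation, and Lemma \ref{lem6.2} gives $|a_k|\le C_k$, so $a_k$ has growth of order $0$. (If one prefers positive orders, use $\mathcal{S}_{0}$ as defined with $O(1)$ growth, which is the convention for $l=0$.) Hence
$$\dim\mathcal{SH}_{2k}\le\dim\ker\Phi+\dim\mathcal{S}_0.$$

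Next we identify $\ker\Phi$. If $a_k=0$, then $u=a_0+\cdots+a_{k-1}t^{k-1}$, with $|a_j|\le C(1+r^{2k-2j})$. We must show that such a $u$ lies in a space whose dimension is bounded by $\sum_{i=1}^{k}\dim\mathcal{S}_{2i}$. The cleanest approach is to iterate, peeling off the top coefficient at each stage. Define the filtration
$$\mathcal{K}_j=\{u\in\mathcal{SH}_{2k}: a_k=\cdots=a_{k-j+1}=0\}.$$
On $\mathcal{K}_j$ the top coefficient $a_{k-j}$ solves $\De a+Va=0$, and Lemma \ref{lem6.2} gives $|a_{k-j}|\le C(1+r^{2j})$. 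Therefore the map $u\mapsto a_{k-j}$ sends $\mathcal{K}_j$ into $\mathcal{S}_{2j}(\M)$, with kernel $\mathcal{K}_{j+1}$. This yields
$$\dim\mathcal{K}_j\le\dim\mathcal{K}_{j+1}+\dim\mathcal{S}_{2j}.$$
Finally, $\mathcal{K}_{k+1}=\{0\}$, since all coefficients vanish. Summing these inequalities for $j=0,\dots,k$ gives
$$\dim\mathcal{SH}_{2k}\le\sum_{j=0}^k\dim\mathcal{S}_{2j},$$
which is the claim. The induction on $k$ is not actually needed.

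The only real point is to confirm the growth of the top coefficient $a_{k-j}$ on $\mathcal{K}_j$. Lemma \ref{lem6.2} applied to $u\in\mathcal{SH}_{2k}$ gives exactly the exponent $2k-2(k-j)=2j$, uniformly, so nothing further is required. One should also check that the map $u\mapsto a_{k-j}$ is linear and that $a_{k-j}$ is smooth. Smoothness follows from $a_{k-j}=\sum_i b_i u(\cdot,t_i)$, as in the proof of Lemma \ref{lem6.1}.
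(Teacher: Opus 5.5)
Your proposal is correct and is essentially the paper's own argument. You use the same top-down filtration by vanishing of the leading coefficients $a_k,a_{k-1},\dots$. Lemma~\ref{lem6.2} places the leading coefficient on $\mathcal{K}_j$ in $\mathcal{S}_{2j}(\M)$, rather than the cruder $\mathcal{S}_{2k}(\M)$ of Lemma~\ref{lem6.1}, and rank--nullity summed over the $k+1$ steps gives the bound. The abandoned map $u\mapsto a_0$ and the induction on $k$ play no role and can simply be deleted.
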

\begin{proof}
    Following the proof of Lemma \ref{lem6.1}, each $u\in\mathcal{SH}_{2k}(\M)$ can be written as $$u=a_0+a_1t+\cdots a_kt^k.$$
    By Lemma \ref{lem6.2}, the linear map $\Psi_0:\mathcal{SH}_{2k}(\M)\rightarrow\mathcal{S}_{2k}(\M)$ given by $\Psi_0(u)=a_k$ actually maps into $\mathcal{S}_0(\M)$, and thus \begin{equation}
        \dim\mathcal{SH}_{2k}(\M)\leq \dim\mathcal{S}_0(\M)+\dim\ker\Psi_0.
    \end{equation}
    Similarly, Lemma \ref{lem6.2} implies that the map $\Psi_1$ actually maps $\ker \Psi_0$ into $\mathcal{S}_2(\M)$. Repeating this process for $k+1$ times yields this theorem.
\end{proof}
\section{Appendix}
In this appendix, we present a lemma, which allows us to bound the integral over the geodesic ball by the integral over the dyadic annuli. 
Since the lemma plays a useful role in our paper, its proof is presented for completeness.
This result is communicated to us by Qi S. Zhang, Department of Mathematics, UC Riverside, CA 92521. 

\begin{lemma}\cite{zq}\label{lem_appendix}
    Let $\M$ be a complete manifold satisfying the assumptions of Theorem \ref{thm1}. Then for any $R>0$, any $\lambda>0$ and any harmonic 1-form $\omega$, there exists a constant $C$ depending only on $C_H,C_S,\varepsilon$ such that \begin{equation}\label{2.3}
        \int_{B_p(R)}|\omega|^2dx\leq C\frac{(1+\frac{1}{2}\lambda)^2}{\lambda^2}\int_{B_p((1+\lambda)R)\backslash B_p(R)}|\omega|^2dx.
    \end{equation} 
\end{lemma}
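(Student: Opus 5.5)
The plan is an energy (Caccioppoli-type) argument for $f=|\omega|$, with the weak Hardy inequality (b) absorbing the negative Ricci term. We never pass to the annulus through pointwise bounds; we only estimate $\int\phi^2|\omega|^2$ for a suitable cutoff.

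\textbf{Step 1: a subsolution inequality.} Since $\omega$ is harmonic, $\Delta_d\omega=0$. The Bochner--Weitzenb\"ock formula and assumption (c) then give
$$\tfrac12\Delta|\omega|^2=|\nabla\omega|^2+\Ric(\omega,\omega)\ge|\nabla\omega|^2-\frac{\varepsilon}{1+r^2}|\omega|^2 .$$
Kato's inequality $|\nabla|\omega||\le|\nabla\omega|$ shows that $f=|\omega|$ satisfies, weakly,
$$f\Delta f+\frac{\varepsilon}{1+r^2}f^2\ge 0 .$$

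\textbf{Step 2: the cutoff and the energy identity.} Let $\phi$ be a Lipschitz radial cutoff with $\phi\equiv1$ on $B_p(R)$ and $\phi\equiv0$ outside $B_p((1+\lambda)R)$, linear in $r$ in between. Then $|\nabla\phi|\le 1/(\lambda R)$, and $\nabla\phi$ is supported in the annulus $A:=B_p((1+\lambda)R)\setminus B_p(R)$. Multiply the inequality of Step 1 by $\phi^2$ and integrate by parts. Using the identity $\int|\nabla(\phi f)|^2=\int f^2|\nabla\phi|^2-\int\phi^2 f\Delta f$, we get
$$\int|\nabla(\phi f)|^2\le\int f^2|\nabla\phi|^2+\varepsilon\int\frac{\phi^2f^2}{1+r^2}.$$

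\textbf{Step 3: absorbing the curvature term.} Apply (b) to $u=\phi f$:
$$\int\frac{\phi^2f^2}{1+r^2}\le C_H\int|\nabla(\phi f)|^2\le C_H\int f^2|\nabla\phi|^2+\varepsilon C_H\int\frac{\phi^2f^2}{1+r^2}.$$
Since $\varepsilon C_H<1$, this yields
$$\int\frac{\phi^2f^2}{1+r^2}\le\frac{C_H}{1-\varepsilon C_H}\cdot\frac{1}{\lambda^2R^2}\int_A f^2 .$$

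\textbf{Step 4: from the weighted integral to $\int_{B_p(R)}f^2$.} On $B_p(R)$ we have $\phi=1$ and $1+r^2\le1+R^2$, so
$$\int_{B_p(R)}f^2\le(1+R^2)\int\frac{\phi^2f^2}{1+r^2}\le C\,\frac{1+R^2}{\lambda^2R^2}\int_A f^2 .$$
For $R\ge1$ the factor $(1+R^2)/R^2$ is at most $2$, which gives a bound of the form $C\lambda^{-2}$. The stated factor $(1+\tfrac12\lambda)^2/\lambda^2$ should come out of a slightly sharper cutoff bookkeeping, or by bounding $1+r^2$ by $1+((1+\tfrac{\lambda}{2})R)^2$ after shrinking the inner region. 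Either way it dominates $\lambda^{-2}$, so the inequality holds as stated for large $R$, with $C$ depending only on $C_H$ and $\varepsilon$.

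\textbf{Main obstacle: small $R$.} When $R\to0$ the factor $(1+R^2)/R^2$ blows up, so the Hardy weight alone does not suffice. For bounded $R$ I would use the Sobolev inequality (a) instead. Applying it on $B_p((1+\lambda)R)$ to $\phi f$ and combining with Step 2, where the curvature term is now bounded by $\varepsilon\int\phi^2f^2$, gives a Faber--Krahn-type bound of the form
$$\int\phi^2f^2\le C_S(1+\lambda)^2R^2\,\bigl(\text{volume ratio}\bigr)\int|\nabla(\phi f)|^2 .$$
For $R$ small, the term $\varepsilon R^2\int\phi^2 f^2$ is absorbed, and this closes the estimate with constants depending on $C_S$ and $\varepsilon$. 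The delicate point is tracking the volume ratio $(\Vol B/\Vol\operatorname{supp}\phi)^{2/n}$. I expect it can be handled by the doubling property of Lemma \ref{volume_growth}, possibly at the cost of restricting to $R$ large, which is all that is used in the proof of Theorem \ref{thm1}.
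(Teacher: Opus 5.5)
Steps 1--3 are correct, and somewhat cleaner than the paper's two-cutoff argument. Step 4 is also a valid, Sobolev-free proof for $R\ge 1$, which is all Theorem~\ref{thm1} uses. But the lemma is claimed for every $R>0$, and your treatment of $R<1$ is not a proof. The crude bound $\varepsilon\int\phi^2f^2$ can only be absorbed when $\varepsilon C_S(1+\lambda)^2R^2<1$. That threshold depends on $\lambda$, and it leaves an intermediate range of $R$ that you never reconcile with Step 4. You then end by proposing to restrict to large $R$, which means proving a weaker statement.

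The volume ratio you worry about does not arise. Assumption (a) is normalized by the average over the same ball, so H\"older on that ball gives, for $v$ supported in $B=B_p(\rho)$,
\[
\int_B v^2=\Vol(B)\fint_B v^2\le \Vol(B)\Big(\fint_B|v|^{\frac{2n}{n-2}}\Big)^{\frac{n-2}{n}}\le C_S\rho^2\int_B|\nabla v|^2 .
\]
There is no volume factor. The only ratio that could enter is $\Vol(\operatorname{supp}v)/\Vol(B)\le 1$, and that one helps.

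The missing step is to apply this Poincar\'e-type inequality \emph{after} the Hardy absorption, not in place of it. Steps 2 and 3 together give
\[
\int|\nabla(\phi f)|^2\le (1-\varepsilon C_H)^{-1}\int f^2|\nabla\phi|^2\le (1-\varepsilon C_H)^{-1}(\lambda R)^{-2}\int_A f^2 .
\]
Now apply the display above with $\rho=(1+\lambda)R$ and $v=\phi f$. This yields
\[
\int_{B_p(R)}f^2\le \frac{C_S(1+\lambda)^2}{(1-\varepsilon C_H)\lambda^2}\int_A f^2\le \frac{4C_S}{1-\varepsilon C_H}\,\frac{(1+\frac{\lambda}{2})^2}{\lambda^2}\int_A f^2
\]
for all $R>0$ at once, because the $R^2$ from the Sobolev radius cancels the $R^{-2}$ from $|\nabla\phi|^2$.

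This is exactly the mechanism of the paper's proof. The paper first proves a Caccioppoli bound for $\int|\nabla\omega|^2$ on $B_p((1+\frac{\lambda}{2})R)$, then applies (a) on that smaller ball, which is where the factor $(1+\frac{\lambda}{2})^2$ comes from.

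Your Hardy-weight route in Step 4 is a genuinely different argument. However, it only controls scales $R\gtrsim 1$, because the weight $(1+r^2)^{-1}$ is not scale-invariant. Your two-regime plan can also be closed by choosing the threshold $R_*$ with $\varepsilon C_S(1+\lambda)^2R_*^2=\frac12$. Then $(1+R^2)/R^2\le 1+2\varepsilon C_S(1+\lambda)^2$ for $R\ge R_*$, and the two regimes match. The direct argument above is simpler.
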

\begin{proof}
    By Bochner-Weitzenb\"ock formula, we have $\Delta |\omega|^2=2|\nabla\omega|^2+2\Ric(\omega,\omega).$ Let $\phi$ be a smooth cut-off function such that \begin{equation}
        \phi=\bigg\{\begin{aligned}
            &1,\quad B_p((1+\frac{1}{2}\lambda)R)\\
            &0,\quad B^C_p((1+\lambda)R),
        \end{aligned}
        \quad |\nabla\phi|\leq\frac{4}{\lambda R}.
    \end{equation}
    Integration by parts yields \begin{equation}
        \begin{aligned}
           \int_\M2\phi^2|\nabla\omega|^2dx+\int_\M2\phi^2\Ric(\omega,\omega)dx&=\int_\M\phi^2\Delta|\omega|^2dx\\
           &=-\int_\M\langle2\phi\nabla\phi,2|\omega|\nabla|\omega|\rangle dx.
        \end{aligned}
    \end{equation}
    By applying Cauchy-Schwarz inequality, we obtain that \begin{equation}\label{eq2.6}
        \int_\M\phi^2|\nabla\omega|^2dx\leq\varepsilon'\int_\M\phi^2|\nabla\omega|^2dx+\frac{1}{\varepsilon'}\int_\M|\omega|^2|\nabla\phi|^2dx+\int_\M\phi^2|\Ric_-||\omega|^2dx.
    \end{equation} 
    By applying our Hardy type inequality and Cauchy-Schwarz inequality, we obtain that \begin{equation}
        \begin{aligned}
            \int_\M\phi^2|\Ric_-||\omega|^2dx&\leq \int_\M\varepsilon\frac{\phi^2|\omega|^2}{1+r^2(x)}dx\leq \varepsilon C_H\int_\M|\nabla(\phi|\omega|)|^2dx\\
            &\leq\varepsilon C_H(1+\delta)\int_\M\phi^2|\nabla\omega|^2dx+\varepsilon C_H(1+\frac{1}{\delta})\int_\M|\omega|^2|\nabla\phi|^2dx
        \end{aligned}
    \end{equation} where we used the Kato inequality in the last inequality. 
By our assumption on $\varepsilon$, we can choose $\varepsilon',\delta$ so that $$(1+\delta)\varepsilon C_H+\varepsilon'<1.$$
Thus we obtain that \begin{equation}
    \int_\M\phi^2|\nabla\omega|^2dx\leq C_1\int_\M|\omega|^2|\nabla\phi|^2dx,
\end{equation} where $C_1$ is a constant depending only on $\varepsilon$ and $C_H$. By our choice of cut-off function $\phi$, we have \begin{equation}
        \int_{B_p((1+\frac{1}{2}\lambda)R)}|\nabla\omega|^2dx\leq\frac{16C_1}{\lambda^2R^2}\int_{B_p((1+\lambda)R)\backslash B_p((1+\frac{1}{2}\lambda)R)}|\omega|^2dx
    \end{equation}
    Now we take another smooth cut-off function $\psi$ such that \begin{equation}
        \psi=\bigg\{\begin{aligned}
            &1,\quad B_p(R)\\
            &0,\quad B^C_p((1+\frac{1}{2}\lambda)R)
        \end{aligned},\quad |\nabla\psi|\leq\frac{4}{\lambda R}.
    \end{equation}
    Thus, we have \begin{equation}
        \begin{aligned} \int_{B_p((1+\frac{1}{2}\lambda)R)}|\nabla(\psi\omega)|^2dx&=\int_{B_p((1+\frac{1}{2}\lambda)R)}|\nabla\psi\otimes\omega+\psi\nabla\omega|^2dx\\
        &\leq 2\int_{B_p((1+\frac{1}{2}\lambda)R)}|\nabla\psi|^2|\omega|^2dx+2\int_{B_p((1+\frac{1}{2}\lambda)R)}|\psi|^2|\nabla\omega|^2dx\\
        &\leq \frac{32}{\lambda^2R^2}\int_{B_p((1+\frac{1}{2}\lambda)R)\backslash B_p(R)}|\omega|^2dx+\frac{32C_1}{\lambda^2R^2}\int_{B_p((1+\lambda)R)\backslash B_p((1+\frac{1}{2}\lambda)R)}|\omega|^2dx\\
        &\leq \frac{32(1+C_1)}{\lambda^2R^2}\int_{B_p((1+\lambda)R)\backslash B_p(R)}|\omega|^2dx.
        \end{aligned}
    \end{equation}
    On the other hand, by applying H\"older inequality and Sobolev inequality, we establish the following inequalities:\begin{equation}
        \begin{aligned}
            \int_{B_p((1+\frac{1}{2}\lambda)R)}|\nabla(\psi\omega)|^2dx&\geq\int_{B_p((1+\frac{1}{2}\lambda)R)}|\nabla(|\psi\omega|)|^2dx\\
            &\geq \frac{|B_p((1+\frac{1}{2}\lambda)R)|}{C_S(1+\frac{1}{2}\lambda)^2R^2}\bigg(\fint_{B_p((1+\frac{1}{2}\lambda)R)}|\psi\omega|^{\frac{2n}{n-2}}dx\bigg)^{\frac{n-2}{n}}\\
            &\geq \frac{|B_p((1+\frac{1}{2}\lambda)R)|}{C_S(1+\frac{1}{2}\lambda)^2R^2}\fint_{B_p((1+\frac{1}{2}\lambda)R)}|\psi\omega|^2dx\\
            &\geq \frac{1}{C_S(1+\frac{1}{2}\lambda)^2R^2}\int_{B_p(R)}|\omega|^2dx
        \end{aligned}
    \end{equation}
    where in the last inequality we use the fact that $\psi\equiv1$ on $B_p(R)$.  Thus our lemma follows by combining the above two estimates. 
\end{proof}
Using a similar argument, we can also establish the stationary Schr\"odinger version of this lemma, which states as follows:
\begin{lemma}
	Let $\M$ be a complete manifold satisfying the assumptions (a)(b) of Theorem \ref{thm1}. 
    And let $V$ be a smooth function on $\M$ satisfying $$V(x)\leq \frac{\varepsilon_0}{1+r^2(x)},$$ where $\varepsilon_0$ is a sufficiently small number such that $\varepsilon_0 C_H<1$. 
    Then for any $R>0$, any $\lambda>0$ and any smooth solution of the stationary Schr\"odinger equation $\Delta u+Vu=0$, there exists a constant $C$ depending only on $C_H,C_S,\varepsilon_0$ such that 
    \begin{equation}\label{2.4}
        \int_{B_p(R)}u^2dx\leq C\frac{(1+\frac{1}{2}\lambda)^2}{\lambda^2}\int_{B_p((1+\lambda)R)\backslash B_p(R)}u^2dx.
    \end{equation} 
\end{lemma}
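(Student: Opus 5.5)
We follow the proof of Lemma \ref{lem_appendix} line by line, with $|\omega|$ replaced by $u$ and $\Ric_-$ replaced by $V$. The first step is a Caccioppoli estimate on a slightly larger ball. Take the cut-off $\phi$ with $\phi\equiv1$ on $B_p((1+\frac12\lambda)R)$, $\phi\equiv0$ off $B_p((1+\lambda)R)$ and $|\nabla\phi|\le\frac{4}{\lambda R}$. Multiply $\Delta u+Vu=0$ by $\phi^2u$ and integrate by parts:
\[
\int_\M\phi^2|\nabla u|^2dx=-2\int_\M\phi u\langle\nabla\phi,\nabla u\rangle dx+\int_\M V\phi^2u^2dx .
\]
The cross term is handled by Cauchy--Schwarz: it is at most $\varepsilon'\int\phi^2|\nabla u|^2+\frac1{\varepsilon'}\int u^2|\nabla\phi|^2$. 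For the potential term, note that only the upper bound $V\le\frac{\varepsilon_0}{1+r^2}$ is needed, since negative values of $V$ only help. The weak Hardy inequality (b) applied to $\phi u\in C_0^\infty$ then gives
\[
\int_\M V\phi^2u^2dx\le \varepsilon_0C_H\int_\M|\nabla(\phi u)|^2dx\le \varepsilon_0C_H(1+\delta)\int_\M\phi^2|\nabla u|^2dx+\varepsilon_0C_H\Big(1+\frac1\delta\Big)\int_\M u^2|\nabla\phi|^2dx .
\]
Since $\varepsilon_0C_H<1$, we can choose $\varepsilon',\delta$ with $\varepsilon'+\varepsilon_0C_H(1+\delta)<1$. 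Absorbing the gradient terms into the left-hand side yields
\[
\int_{B_p((1+\frac12\lambda)R)}|\nabla u|^2dx\le \frac{C_1}{\lambda^2R^2}\int_{B_p((1+\lambda)R)\setminus B_p((1+\frac12\lambda)R)}u^2dx,
\]
with $C_1=C_1(\varepsilon_0C_H)$. This absorption is the only point where the smallness condition enters, and it is the key step.

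The second step is to turn the gradient bound into an $L^2$ bound. Take a second cut-off $\psi$ with $\psi\equiv1$ on $B_p(R)$, $\psi\equiv0$ off $B_p((1+\frac12\lambda)R)$ and $|\nabla\psi|\le\frac4{\lambda R}$. From $|\nabla(\psi u)|^2\le2|\nabla\psi|^2u^2+2\psi^2|\nabla u|^2$ and the first step we get
\[
\int|\nabla(\psi u)|^2dx\le\frac{C(1+C_1)}{\lambda^2R^2}\int_{B_p((1+\lambda)R)\setminus B_p(R)}u^2dx .
\]
Since $\psi u\in C_0^\infty(B_p((1+\frac12\lambda)R))$, the Sobolev inequality (a) applies on this ball. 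Hölder's inequality on the normalized measure, $\fint(\psi u)^2\le(\fint|\psi u|^{2n/(n-2)})^{(n-2)/n}$, then gives
\[
\int_{B_p(R)}u^2dx\le\int(\psi u)^2dx\le C_S\Big(1+\frac12\lambda\Big)^2R^2\int|\nabla(\psi u)|^2dx .
\]
Combining the two displays, the factors $R^2$ cancel and we obtain (\ref{2.4}) with a constant depending only on $C_S$, $C_H$ and $\varepsilon_0$.

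No Kato inequality is needed here, because $u$ is a scalar function. The main obstacle is therefore only the bookkeeping in the absorption step: we must make sure the constants in front of $\int\phi^2|\nabla u|^2$ coming from Cauchy--Schwarz and from Hardy add up to less than $1$. This is exactly what $\varepsilon_0C_H<1$ guarantees.
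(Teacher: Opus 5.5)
Your proposal is correct and matches the paper's approach: the paper proves this lemma by repeating the proof of Lemma~\ref{lem_appendix}, namely a Caccioppoli estimate with the Hardy term absorbed via $\varepsilon_0C_H<1$, followed by a second cut-off combined with the Sobolev inequality (a) and H\"older, exactly as you do. Your remarks are also accurate: Kato's inequality is unnecessary for scalar $u$, and only the upper bound on $V$ is used, which is consistent with the hypotheses of the statement.
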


\section*{Acknowledgements}
The author would like to thank Prof. Qi S. Zhang for many valuable discussions and suggestions. 
The author is also grateful to his supervisor Prof. Bobo Hua for helpful discussions. 
Finally the author would like to thank Prof. Jiaping Wang for his interest in this work and valuable feedback.

\section*{AI assistance statement}
The author used OpenAI’s ChatGPT for language polishing and for assistance in checking some computational details in Section \ref{sec5}. 
All mathematical arguments, proofs, and conclusions were independently verified by the author, who takes full responsibility for the content of this paper.
\bibliographystyle{alpha}
\bibliography{ref}

@article{lin2019ancient,
  title={On ancient solutions of the heat equation},
  author={Lin, Fanghua and Zhang, Qi S},
  journal={Communications on Pure and Applied Mathematics},
  volume={72},
  number={9},
  pages={2006--2028},
  year={2019},
  publisher={Wiley Online Library}
}

@article{colding2021optimal,
  title={Optimal bounds for ancient caloric functions},
  author={Colding, Tobias Holck and Minicozzi II, William P},
  journal={Duke Mathematical Journal},
  volume={170},
  number={18},
  pages={4171--4182},
  year={2021},
  publisher={Duke University Press}
}

@article{sung2022polynomial,
  title={Polynomial growth ancient solutions to harmonic form heat flow},
  author={Sung, Chiung-Jue Anna and Tsai, Yi-Hao},
  journal={Archiv der Mathematik},
  volume={118},
  number={6},
  pages={637--647},
  year={2022},
  publisher={Springer}
}

@article{colding1997harmonic,
  title={Harmonic functions on manifolds},
  author={Colding, Tobias H and Minicozzi, William P},
  journal={Annals of mathematics},
  volume={146},
  number={3},
  pages={725--747},
  year={1997},
  publisher={JSTOR}
}

@article{li1997harmonic,
  title={Harmonic sections of polynomial growth},
  author={Li, Peter},
  journal={Mathematical Research Letters},
  volume={4},
  number={1},
  pages={35--44},
  year={1997},
  publisher={International Press of Boston}
}

@article {chen2006dimension,
    AUTHOR = {Chen, Jui-Tang Ray and Sung, Chiung-Jue Anna},
     TITLE = {Dimension estimate of polynomial growth harmonic forms},
   JOURNAL = {J. Differential Geom.},
  FJOURNAL = {Journal of Differential Geometry},
    VOLUME = {73},
      YEAR = {2006},
    NUMBER = {1},
     PAGES = {167--183},
      ISSN = {0022-040X,1945-743X},
   MRCLASS = {53C65 (58A10 58J50)},
  MRNUMBER = {2217522},
MRREVIEWER = {Man\ Chun\ Leung},
       URL = {http://projecteuclid.org/euclid.jdg/1146680515},
}

@article{li2012geometric,
  title={Geometric analysis},
  author={Li, P},
  journal={Cambridge Studies in Advanced Mathematics/Cam-bridge University Press},
  volume={134},
  year={2012}
}

@article{colding2020complexity,
  title={Complexity of parabolic systems},
  author={Colding, Tobias Holck and Minicozzi, William P},
  journal={Publications math{\'e}matiques de l'IH{\'E}S},
  volume={132},
  number={1},
  pages={83--135},
  year={2020},
  publisher={Springer}
}

@article{ni2004monotonicity,
  title={A monotonicity formula on complete {K{\"a}hler} manifolds with nonnegative bisectional curvature},
  author={Ni, Lei},
  journal={Journal of the American Mathematical Society},
  volume={17},
  number={4},
  pages={909--946},
  year={2004}
}

@article{chen2006sharp,
  title={Sharp dimension estimates of holomorphic functions and rigidity},
  author={Chen, Bing-Long and Fu, Xiao-Yong and Yin, Le and Zhu, Xi-Ping},
  journal={Transactions of the American Mathematical Society},
  volume={358},
  number={4},
  pages={1435--1454},
  year={2006}
}

@article{hua2011harmonic,
  title={Harmonic functions of polynomial growth on singular spaces with nonnegative {Ricci} curvature},
  author={Hua, Bobo},
  journal={Proceedings of the American Mathematical Society},
  volume={139},
  number={6},
  pages={2191--2205},
  year={2011}
}

@article{ho2023ancient,
  title={Ancient Caloric Functions on Pseudohermitian Manifolds},
  author={Ho, Pak Tung},
  journal={The Journal of Geometric Analysis},
  volume={33},
  number={2},
  pages={41},
  year={2023},
  publisher={Springer}
}

@article{liu2016three,
  title={Three-circle theorem and dimension estimate for holomorphic functions on {K{\"a}hler} manifolds},
  author={Liu, Gang},
  journal={Duke Mathematical Journal},
  volume={165},
  number={15},
  pages={2899--2919},
  year={2016},
  publisher={Duke University Press}
}

@article{liu2021dimension,
  title={Dimension estimate of polynomial growth holomorphic functions},
  author={Liu, Gang},
  journal={Peking Mathematical Journal},
  volume={4},
  number={2},
  pages={187--202},
  year={2021},
  publisher={Springer}
}

@article{liu2023brief,
  title={A brief survey on dimension estimate of holomorphic functions on {K{\"a}hler} manifolds},
  author={Liu, Gang},
  journal={The Journal of Geometric Analysis},
  volume={33},
  number={4},
  pages={118},
  year={2023},
  publisher={Springer}
}

@article{he2024dimension,
  title={The dimension of polynomial growth holomorphic functions and forms on gradient {K\"ahler Ricci} shrinkers},
  author={He, Fei and Ou, Jianyu},
  journal={arXiv preprint arXiv:2401.02685},
  year={2024}
}

@inproceedings{li1980sobolev,
  title={On the {Sobolev} constant and the $ p $-spectrum of a compact {Riemannian} manifold},
  author={Li, Peter},
  booktitle={Annales scientifiques de l'{\'E}cole Normale Sup{\'e}rieure},
  volume={13},
  number={4},
  pages={451--468},
  year={1980}
}

@article{wu2023harmonic,
  title={Harmonic and {Schr{\"o}dinger} functions of polynomial growth on gradient shrinking {Ricci} solitons},
  author={Wu, Jia-Yong and Wu, Peng},
  journal={Geometriae Dedicata},
  volume={217},
  number={4},
  pages={75},
  year={2023},
  publisher={Springer}
}

@book{han2011elliptic,
  title={Elliptic partial differential equations},
  author={Han, Qing and Lin, Fanghua},
  volume={1},
  year={2011},
  publisher={American Mathematical Soc.}
}

@phdthesis{calle2007mean,
  title={Mean curvature flow and minimal surfaces},
  author={Calle, Maria},
  year={2007},
  school={New York University}
}

@article{calle2006bounding,
  title={Bounding dimension of ambient space by density for mean curvature flow},
  author={Calle, Maria},
  journal={Mathematische Zeitschrift},
  volume={252},
  pages={655--668},
  year={2006},
  publisher={Springer}
}

@article{kleiner2010new,
  title={A new proof of {Gromov’s} theorem on groups of polynomial growth},
  author={Kleiner, Bruce},
  journal={Journal of the American Mathematical Society},
  volume={23},
  number={3},
  pages={815--829},
  year={2010}
}

@article{gromov1981groups,
  title={Groups of polynomial growth and expanding maps (with an appendix by Jacques Tits)},
  author={Gromov, Michael},
  journal={Publications Math{\'e}matiques de l'IH{\'E}S},
  volume={53},
  pages={53--78},
  year={1981}
}

@article{donnelly2001harmonic,
  title={Harmonic functions on manifolds of nonnegative {Ricci} curvature},
  author={Donnelly, Harold},
  journal={International Mathematics Research Notices},
  volume={2001},
  number={8},
  pages={429--434},
  year={2001},
  publisher={OUP}
}

@book{saloff2002aspects,
  title={Aspects of {Sobolev-type} inequalities},
  author={Saloff-Coste, Laurent},
  volume={289},
  year={2002},
  publisher={Cambridge University Press}
}

@article{carron1997inegalites,
  title={In{\'e}galit{\'e}s de {Hardy} sur les vari{\'e}t{\'e}s riemanniennes non-compactes},
  author={Carron, Gilles},
  journal={Journal de math{\'e}matiques pures et appliqu{\'e}es},
  volume={76},
  number={10},
  pages={883--891},
  year={1997},
  publisher={Elsevier Masson}
}

@article{kombe2009improved,
  title={Improved {Hardy} and {Rellich} inequalities on {Riemannian} manifolds},
  author={Kombe, Ismail and {\"O}zaydin, Murad},
  journal={Transactions of the American Mathematical Society},
  volume={361},
  number={12},
  pages={6191--6203},
  year={2009}
}

@article{lin2026space,
  title={Space of Ancient Caloric Functions on Some Manifolds Beyond Volume Doubling},
  author={Lin, Fanghua and Qiu, Hongbing and Sun, Jun and Zhang, Qi S},
  journal={The Journal of Geometric Analysis},
  volume={36},
  number={4},
  pages={128},
  year={2026},
  publisher={Springer}
}

@article{wolff1992property,
  title={A property of measures in RN and an application to unique continuation},
  author={Wolff, Thomas H},
  journal={Geometric \& Functional Analysis GAFA},
  volume={2},
  number={2},
  pages={225--284},
  year={1992},
  publisher={Springer}
}

@article{colding1998liouville,
  title={Liouville theorems for harmonic sections and applications},
  author={Colding, Tobias H and Minicozzi, William P},
  journal={Communications on Pure and Applied Mathematics: A Journal Issued by the Courant Institute of Mathematical Sciences},
  volume={51},
  number={2},
  pages={113--138},
  year={1998},
  publisher={Wiley Online Library}
}

@article{generau2020laplacian,
  title={Laplacian of the distance function on the cut locus on a {Riemannian} manifold},
  author={G{\'e}n{\'e}rau, Fran{\c{c}}ois},
  journal={Nonlinearity},
  volume={33},
  number={8},
  pages={3928},
  year={2020},
  publisher={IOP Publishing}
}

@article{he2025dimension,
  title={Dimension Estimate and Existence of Holomorphic Sections With Polynomial Growth on Gradient {K{\"a}hler} Ricci Shrinkers},
  author={He, Fei and Ou, Jianyu},
  journal={International Mathematics Research Notices},
  volume={2025},
  number={23},
  pages={rnaf350},
  year={2025},
  publisher={Oxford University Press}
}

@article{tam1998note,
  title={A note on harmonic forms on complete manifolds},
  author={Tam, Luen-Fai},
  journal={Proceedings of the American Mathematical Society},
  volume={126},
  number={10},
  pages={3097--3108},
  year={1998}
}

@article{choi1984asymptotic,
  title={Asymptotic Dirichlet problems for harmonic functions on {Riemannian} manifolds},
  author={Choi, Hyeong In},
  journal={Transactions of the American Mathematical Society},
  volume={281},
  number={2},
  pages={691--716},
  year={1984}
}

@article{colding1996generalized,
author = {Tobias H. Colding and William P. Minicozzi},
title = {{Generalized Liouville properties of manifolds}},
volume = {3},
journal = {Mathematical Research Letters},
number = {6},
publisher = {International Press of Boston},
pages = {723 -- 729},
year = {1996},
doi = {10.4310/MRL.1996.v3.n6.a1},
URL = {https://doi.org/10.4310/MRL.1996.v3.n6.a1}
}

@article{zq,
    author = {Qi S. Zhang},
    title = {private communication}
}

@article{li2019new,
  title={A new complete {Calabi--Yau} metric on $\mathbb{C}^3$},
  author={Li, Yang},
  journal={Inventiones mathematicae},
  volume={217},
  number={1},
  pages={1--34},
  year={2019},
  publisher={Springer}
}

@article{hein2011weighted,
  title={Weighted {Sobolev} inequalities under lower {Ricci} curvature bounds},
  author={Hein, Hans-Joachim},
  journal={Proceedings of the American Mathematical Society},
  volume={139},
  number={8},
  pages={2943--2955},
  year={2011}
}

@article{carron20072,
  title={L\^{}2 harmonics forms on non compact manifolds},
  author={Carron, Gilles},
  journal={arXiv preprint arXiv:0704.3194},
  year={2007}
}
\end{document}